\definecolor{darkblue}{rgb}{0.1, 0.2, 0.75}
\definecolor{darkgreen}{rgb}{0.1, 0.35, 0}
\definecolor{candypink}{rgb}{0.89, 0.44, 0.48}
\definecolor{deepcerise}{rgb}{0.85, 0.2, 0.53}
\definecolor{atomictangerine}{rgb}{1.0, 0.6, 0.4}
\definecolor{fandango}{rgb}{0.71, 0.2, 0.54}
\newtheorem{theorem}{Theorem}[section]
\newtheorem{Def}[theorem]{Definition}
\newtheorem{thm}[theorem]{Theorem}
\newtheorem{prop}[theorem]{Proposition}
\newtheorem{lemma}[theorem]{Lemma}
\newtheorem{example}[theorem]{Example}
\theoremstyle{remark}
\newtheorem{remark}[theorem]{Remark}
\numberwithin{equation}{section}
\def\RR{\mathbb{R}}
\def\NN{\mathbb{N}}
\def\mE{\mathbb{E}}
\def\ZZ{\mathbb{Z}}
\def\bfy{{\bf y}}
\newcommand{\cc}{{\mathcal C}}
\newcommand{\cd}{{\mathcal D}}
\newcommand{\cf}{{\mathcal F}}
\newcommand{\ch}{{\mathcal H}}
\newcommand{\cj}{{\mathcal J}}
\newcommand{\cl}{{\mathcal L}}
\newcommand{\cs}{{\mathcal S}}
\def\si{\sigma}
\def\al{{\alpha}}
\def\be{{\beta}}
\def\ga{{\gamma}}
 \newcommand{\ep}{\varepsilon}
\newcommand{\lp}{\left(}
\newcommand{\rp}{\right)}
\def\ll{\llbracket}
\def\rr{\rrbracket}
\begin{document}
\title[variations of stochastic processes]
{Power    variations and limit theorems for     stochastic processes controlled  by fractional Brownian motions}
\date{}   

\author[Y. Liu]
{Yanghui Liu} 
\address{Y. Liu: Baruch College, CUNY, New York}
\email{yanghui.liu@baruch.cuny.edu}

\author[X. Wang]
{Xiaohua Wang} 
\address{X. Wang: New York City College of Technology, CUNY, New York}
\email{xiwang@citytech.cuny.edu}

\keywords{Power variation, discrete rough integral,  fractional Brownian motion, controlled rough path, limit theorems, estimation of volatility. }

\begin{abstract} 
In this paper we establish   limit theorems for   power variations of  stochastic processes controlled by fractional Brownian motions with Hurst parameter $H\leq 1/2$. We show that   the power variations of such processes can be decomposed into the mix of several weighted random sums plus some remainder terms, and the convergences of   power variations  are dominated by different combinations of those weighted sums depending on whether $H<1/4$, $H=1/4$, or  $H>1/4$.  We show that when   $H\geq 1/4$ the centered power  variation converges stably at the rates $n^{-1/2}$, and when $H<1/4$ it converges  in probability at  the rate    $n^{-2H}$.
We determine the limit of the mixed weighted sum based on a  rough path approach  developed in \cite{LT20}.
\end{abstract}

\maketitle

{
\hypersetup{linkcolor=black}
\setcounter{tocdepth}{1}
}

\section{Introduction}
 
 In this paper we establish limit theorems for power variations of  low-regularity processes in a general  rough path framework.  
Recall that for a  stochastic process $(y_{t}, t\in[0,1])$ the power variation of order $p>0$ ($p$-variation for short) is defined as
\begin{eqnarray}\label{e.power}
\sum_{k=0}^{n-1} \big|y_{t_{k+1}} - y_{t_{k}} \big|^{p}, 
\end{eqnarray}
where $0=t_{0}<t_{1}<\cdots<t_{n}=1$ is a partition of the time interval $[0,1]$. 
The power variation has been widely used in quantitative finance for  the estimation of  volatility and related  parameters; see  \cite{AJ, barndorff2002econometric, barndorff2003realized, barndorff2004econometric, barndorff2004power, chong2022statistical} and  references therein. 

When $y$ is a semimartingale the    power variation has been  discussed in  
\cite{barndorff2006limit, barndorff2003realized, barndorff2006, jacod08, kabanov2006central, kinnebrock2008note, woerner2003variational, woerner2005estimation}.  The case of  stationary Gaussian   was treated  in  \cite{guyon1989convergence, leon2000weak}.  
When $y $ is a Young integral (see \cite{young1936inequality}) driven by  fractional Gaussian processes  the power variation   has been  investigated   in  \cite{barndorff2009power, corcuera2006power, norvaivsa2015weighted}. The study of power variations \eqref{e.power} in the non-semimartingale case is closely related to the limits   of weighted random sums. For example, a key step in \cite{barndorff2009power, corcuera2006power, norvaivsa2015weighted} is to observe that  when $y$ is a Young integral of the form: $y_{t} = \int_{0}^{t} z_{s}dx_{s}$   and      the integrator $x$   is H\"older continuous of order greater than $1/2$, the increment $y_{t_{k+1}} - y_{t_{k}}$ in \eqref{e.power} can be replaced by its first-order approximation  $   z_{t_{k}}(x_{t_{k+1}} - x_{t_{k}})$.      
   We refer the  reader  to \cite{binotto2018weak, burdzy2010change, corcuera2014asymptotics, LT20, liu2023convergence, nourdin2008asymptotic, nourdin2016quantitative, nourdin2010central, nourdin2009asymptotic, nualart2019asymptotic} for discussions about limit theorems of weighted random sums.  

Recently,  empirical evidence was found that security  volatility  actually has much lower regularity  than semimartingales  (see \cite{GJR}). The statement is further  supported by other empirical work based on both return data (see \cite{bennedsen2016decoupling, FTW21, gatheral2020quadratic}) and option data (see \cite{BFG, F21, LMPR18}).  Motivated by these advances in quantitative finance, it is then   natural to ask the following question: Is there a limit theorem for power variations when the process is ``rougher'' than semimartingale, and if so, under what conditions does the limit theorem hold? 

A main difficulty in the low-regularity case is that the aforementioned relation between $y$ and its first-order approximation  is no longer true. In fact, we will see that  the difference between the power variations of  a low-regularity process $y$ and that of its first-order approximation has nonzero contribution  to the limit of   power variation. A second difficulty is that the weighted sums corresponding to \eqref{e.power} involve   functionals of the forms $|x|^{p}$ and $|x|^{p}\cdot \text{sign}(x)$, where $x$ is the underlying Gaussian process for $y$ (see Definition \ref{def.control} for our definition of the processes of $x$ and $y$). Both functionals are in the  infinite sum of chaos when $p$ is non-integer and so a direct approach of Malliavin integration by parts for the weighted sums is not possible (Recall that the integration by parts is a crucial step in the study of weighted sums in \cite{binotto2018weak, nourdin2008asymptotic, nourdin2016quantitative, nourdin2010central, nourdin2009asymptotic, nualart2019asymptotic}).  

  
In this paper we show that a limit theorem of power variation does hold under the assumption  that  $y$ is a process ``controlled'' by a fractional Brownian motion (fBm for short). The controlled process is a main concept in the theory of rough path,  and it is broad enough to contain two important  models of stochastic  processes   we have in mind: the  rough integrals and the rough differential equations (see Example \ref{example.control}). Our result generalizes \cite{corcuera2006power} to fBms with any Hurst parameter $H\in(0,1)$.

 
 
 

 
 
 
Our main result can be informally stated as follows. 
The reader is  referred  to Theorem \ref{thm.main}  for a more precise statement.

 \begin{thm}
Let $x$  be 
       a fBm with Hurst parameter $H\leq 1/2$ and   $(y ,y',\dots, y^{(\ell-1)})$ be a   process controlled by $x$ almost surely (see Definition \ref{def.control}) for some $\ell\in\NN$. Define the function $\phi(x)=|x|^{p}$, $x\in\RR$ for some constant $p>0$, and denote    $\phi'$ and $\phi''$  the derivatives  of $\phi$, and   $c_{p}$ and $\si$ are     constants   given  in \eqref{e.cp} and \eqref{e.pvar2}, respectively.   
Let 
\begin{eqnarray*}
U^{n}_{t} = n^{pH-1}
\sum_{0\leq t_{k}<t} \phi(y_{t_{k+1}} - y_{t_{k}} )  - c_{p}\int_{0}^{t} \phi(y'_{u}) du
 \qquad t\in[0,1].  
\end{eqnarray*}
 Let   $W$ be a standard Brownian motion independent of $x$.  Then 

\noindent (i) For $1/4< H\leq1/2$, $p\in[3,\infty)\cup\{2\}$ and $\ell\geq4$  we have the convergence in law:  
\begin{eqnarray*}
n^{1/2}U^{n}_{1}   \to \si \int_{0}^{1} \phi(y_{u}') dW_{u} ,
\qquad \text{as $n\to\infty$. }
\end{eqnarray*}

\noindent (ii) For $H=1/4$, $p\in[5,\infty)\cup\{2,4\}$ and $\ell\geq 6$    the following  convergence in law holds:
\begin{eqnarray*}
  n^{1/2}U^{n}_{1}  \to \si \int_{0}^{1} \phi(y_{u}')dW_{u} 
- \frac{c_{p}}{8} \int_{0}^{1}  
    \phi''(y_{u}')(y_{u}'')^{2} du+ \frac{(p-2)c_{p}}{24} \int_{0}^{1}  \phi'(y_{u}') y_{u}''' 
    du 
\end{eqnarray*}
as $n\to\infty$. 


 \noindent (iii) For $H<1/4$, $p\in[5,\infty)\cup\{2,4\}$  and $\ell\geq 6$ 
 we have the  convergence in probability: 
\begin{eqnarray*}
 n^{2H}U^{n}_{1}   \to - \frac{c_{p}}{8} \int_{0}^{1}  
    \phi''(y_{u}')(y_{u}'')^{2} du+ \frac{(p-2)c_{p}}{24} \int_{0}^{1}  \phi'(y_{u}') y_{u}''' 
    du
\qquad
  \text{as $n\to\infty$.  }
\end{eqnarray*}

\end{thm}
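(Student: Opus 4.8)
The plan is to reduce $U^{n}_{1}$ to a finite list of weighted random sums through a two-fold expansion—first of the increment $\delta y_{k}:=y_{t_{k+1}}-y_{t_{k}}$ via the controlled structure, then of $\phi$ about its leading term—and to read off the three regimes from the limit theorems for those sums. The decisive feature is that, after centering, the dominant sum is a weighted quadratic (Hermite rank two) variation whose limit undergoes a phase transition exactly at $H=1/4$: a mixed Gaussian of size $n^{-1/2}$ for $H>1/4$, a bias of size $n^{-2H}$ for $H<1/4$, and both at the boundary.

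First I would expand $\delta y_{k}$ using the controlled-path structure of $(y,y',\dots,y^{(\ell-1)})$: writing $\delta x_{k}:=x_{t_{k+1}}-x_{t_{k}}$, one has $\delta y_{k}=y'_{t_{k}}\delta x_{k}+\tfrac{1}{2}y''_{t_{k}}(\delta x_{k})^{2}+\tfrac{1}{6}y'''_{t_{k}}(\delta x_{k})^{3}+\cdots+r_{k}$ with a higher-order remainder $r_{k}$. Substituting into $\phi$ and Taylor-expanding about the leading term $A_{k}:=y'_{t_{k}}\delta x_{k}$, and using $\phi(x)=|x|^{p}$, gives
\begin{eqnarray*}
\phi(\delta y_{k})=|y'_{t_{k}}|^{p}\,|\delta x_{k}|^{p}+\big(\text{odd, }\,|\delta x_{k}|^{p+1}\big)+\big(\text{even, }\,|\delta x_{k}|^{p+2}\big)+\big(\text{smaller}\big).
\end{eqnarray*}
Since the normalized increment $\xi_{k}:=n^{H}\delta x_{k}$ is standard Gaussian, a summand carrying $|\delta x_{k}|^{p+j}$ comes with an extra prefactor $n^{-jH}$. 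The odd pieces lead to the Hermite-rank-one functional $|\xi|^{p+1}\mathrm{sign}(\xi)$; using the vanishing $\sum_{r}\rho(r)=0$ of the fBm-increment covariance for $H<1/2$, I would show they are negligible at both target rates. The surviving contributions are the leading piece and the even $|\delta x_{k}|^{p+2}$ pieces (carrying $n^{-2H}$).

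The core is the leading piece. The law of large numbers extracts from the mean $\mE|\xi_{k}|^{p}=c_{p}$ exactly the Riemann sum $c_{p}\int_{0}^{1}\phi(y'_{u})du$ subtracted in $U^{n}_{1}$, leaving the fluctuation $\tfrac{1}{n}\sum_{k}|y'_{t_{k}}|^{p}(|\xi_{k}|^{p}-c_{p})$. Expanding $|\xi|^{p}-c_{p}$ in Hermite chaos yields a weighted sum over all even ranks, which I would treat rank by rank through the rough-path limit theorems of \cite{LT20}. The rank-two component is the weighted quadratic variation above: for $H>1/4$ it converges stably at rate $n^{-1/2}$, and for $H\leq1/4$ it additionally contributes at rate $n^{-2H}$ a bias expressed through the second-order variation of the weight $\phi(y'_{\cdot})$ along the controlled path, i.e.\ through $\phi''(y'_{u})(y''_{u})^{2}$ and $\phi'(y'_{u})y'''_{u}$; combined with the even $|\delta x_{k}|^{p+2}$ Taylor pieces this furnishes the two bias integrals in the statement. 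The even ranks $\geq4$ have bias thresholds at $H\leq1/8$ and otherwise stay Gaussian at $n^{-1/2}$, so for $H\leq1/2$ they only feed the variance $\si$ in (i)--(ii) and, being of size at most $n^{-\min(1/2,4H)}\leq n^{-2H}$, are dominated by $n^{-2H}$ and drop out in (iii). Assembling the pieces at the scale $n^{1/2}$ (for $H\geq1/4$) or $n^{2H}$ (for $H<1/4$) yields (i), (ii) and (iii), the boundary case $H=1/4$ retaining both the Gaussian and the bias.

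The main obstacle is twofold. Analytically, because $p$ is non-integer the functionals $|\xi|^{p}$ and $|\xi|^{p}\mathrm{sign}(\xi)$ lie in infinitely many Wiener chaoses, so the Malliavin integration-by-parts available for integer powers does not apply; this is precisely why the rank-by-rank machinery of \cite{LT20} is needed, and it forces the hypotheses $p\in[3,\infty)\cup\{2\}$, $\ell\geq4$ (resp.\ $p\in[5,\infty)\cup\{2,4\}$, $\ell\geq6$), which secure enough integrability of $\phi',\phi''$ (resp.\ of the higher derivatives) near the origin and enough levels in the controlled expansion to both exhibit and control the order-$n^{-2H}$ terms. The truly delicate point is pinning the constants $-c_{p}/8$ and $(p-2)c_{p}/24$: they appear only after combining the rank-two bias from \cite{LT20} with the even $|\delta x_{k}|^{p+2}$ contributions through the chain rule $\tfrac{d^{2}}{dx^{2}}\phi(y')=\phi''(y')(y'')^{2}+\phi'(y')y'''$ and cancelling the spurious mean terms—the factor $p-2$, which vanishes at $p=2$ (where $\phi$ is a polynomial and the quadratic-variation bias is classical), records one such cancellation. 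Finally, upgrading convergence in law to stable convergence and identifying $W$ as independent of $x$ is obtained, as in \cite{LT20}, from the chaos/rough-path representation of the limiting sums.
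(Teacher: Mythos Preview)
Your overall architecture---expand $\delta y_{k}$ via the controlled structure, Taylor-expand $\phi$ about $y'_{t_{k}}\delta x_{k}$, and reduce to weighted sums handled by the machinery of \cite{LT20}---is exactly the paper's approach. However, there is a genuine gap in your treatment of the odd, Hermite-rank-one piece.

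You write that the odd terms ``lead to the Hermite-rank-one functional $|\xi|^{p+1}\mathrm{sign}(\xi)$; using the vanishing $\sum_{r}\rho(r)=0$ \dots\ I would show they are negligible at both target rates.'' This is correct for the rate $n^{-1/2}$ when $H>1/4$, but it is \emph{false} at the rate $n^{-2H}$ when $H\le 1/4$. The identity $\sum_{r}\rho(r)=0$ only tells you that the rank-one part does not contribute to the CLT variance; it does \emph{not} kill the weighted sum. Indeed, the paper's Lemma~\ref{prop.sign} (an instance of \cite[Theorem 4.14(ii)]{LT20}) shows that for $f(x)=|x|^{p+1}\mathrm{sign}(x)$ and any weight $z$ controlled to order $2$,
\[
n^{H-1}\sum_{0\le t_{k}<t} z_{t_{k}}\, f(n^{H}\delta x_{t_{k}t_{k+1}})\ \longrightarrow\ -\tfrac12\,c_{p+2}\int_{0}^{t} z'_{u}\,du
\]
in probability, with a nonzero limit. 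The first-order Taylor term (the paper's $J_{2}$) is precisely of this form with weight $z=\phi_{1}(y')\,y''$, and one computes
\[
n^{(p+2)H-1}\sum_{k}J_{2}\ \longrightarrow\ -\tfrac{(p+1)c_{p}}{4}\int_{0}^{t}\bigl(\phi''(y'_{u})(y''_{u})^{2}+\phi'(y'_{u})\,y'''_{u}\bigr)\,du .
\]
Thus $J_{2}$ contributes at exactly the bias rate $n^{-2H}$ in cases (ii)--(iii).

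If you drop $J_{2}$ and assemble only the rank-two bias from the leading term (the paper's $J_{1}$, via Proposition~\ref{prop.pvar}) together with the even $|\delta x_{k}|^{p+2}$ pieces ($J_{3}$ and $J_{4}$), the coefficients do not match: for the $\phi''(y')(y'')^{2}$ integral you obtain $\frac{pc_{p}}{8}+\frac{(p+1)c_{p}}{8}=\frac{(2p+1)c_{p}}{8}$ instead of $-\frac{c_{p}}{8}$, and for the $\phi'(y')y'''$ integral you obtain $\frac{pc_{p}}{8}+\frac{(p+1)c_{p}}{6}=\frac{(7p+4)c_{p}}{24}$ instead of $\frac{(p-2)c_{p}}{24}$. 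The missing $-\frac{(p+1)c_{p}}{4}$ from $J_{2}$ is exactly what restores both constants. So the ``delicate point'' you flag about pinning the constants cannot be resolved without keeping the odd term; your proposed cancellation mechanism is incomplete.
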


 
 As  mentioned previously,   the limit of   power variation in the low-regularity case is not solely determined by the first-order approximation of   $y$. A first step of our proof is thus to consider the higher-order approximation of $y$ and to    estimate  the corresponding weighted random sums and  remainder terms.  
  The convergences of      mixed weighted sums and   power variation are  based on a rough path approach developed in \cite{LT20}.  In particular, we will see that   the rough path approach  allows us to avoid the application of Malliavin integration by parts for functionals of infinite chaos.  


 
 
 The  paper is structured as follows: In Section \ref{section.rp} we introduce the concept of discrete rough paths and discrete rough integrals and recall some basic results of the rough paths theory. In Section \ref{section.bd} we derive some useful estimates and limit theorem results for weighted random sums related to fBm. In Section \ref{section.main} we prove the limit theorem of power variation for processes controlled by fBm. 
 

\subsection{Notation}\label{section.notation} 
Let $\pi:0=t_{0}<t_{1}<\cdots<t_{n}=1$ be a partition on $[0,1]$. For $s,t\in [0,1]$ such that $s<t$,  we write $\ll s,t \rr$ for the discrete interval that consists of   $t_{k} $'s such that   $ t_{k}\in [s,t] $ and   the two endpoints $s$ and $t$. Namely, $\ll s, t\rr = \{t_{k}: s\leq t_{k} \leq t\}\cup \{s,t\}$.  For $N\in\NN=\{1,2,\dots\}$ we denote   the discrete simplex  $\cs_{N}(\ll s,t\rr)=
\{ (u_{1},\dots, u_{N}) \in \ll s,t\rr^{N} :\, u_{1}< \cdots< u_{N}  \}$. Similarly, we denote the continuous simplex: $\cs_{N}([s,t])= \{ (u_{1}  ,\dots, u_{N} ) \in [ s,t]^{N} :\, u_{1} < \cdots< u_{N}  \}$.  

Throughout the paper we work on a  probability space $(\Omega, \mathscr{F}, P)$. If $X$ is a random variable, we denote by $| X |_{L_{p}}  $ the $L_{p} $-norm of $X$.
The letter $K$  stands  for  a constant independent of    any important parameters which can change from line to line. We write $A\lesssim B$ if there is a constant $K>0$ such that $A\leq KB$. We denote $[a]  $   the integer part of   $a$. 

\section{Preliminary results} \label{section.rp}
In this section, we introduce the concept of discrete rough paths and discrete rough integrals, and recall some basic results of the rough paths theory. In the second part of the section we recall the elements  of Wiener chaos expansion and   fractional Brownian motion. 

\subsection{Controlled rough paths and algebraic properties}
This subsection is devoted to introducing the main rough paths notations which will be used in the sequel.   The reader is referred to   \cite{FH20, FV10} for an introduction to the rough path theory.

Recall that the continuous simplex $\cs_{k}([0,1])$ is defined in Section \ref{section.notation}. 
We denote by $\cc_{k} $ the set of functions $g : \cs_{k}([0,1]) \to \RR$ such that $g_{u_{1}\cdots u_{k}} = 0$ whenever $u_{i} = u_{i+1}$ for some $i\leq k -1$. Such a function will be called a $(k-1)$-\emph{increment}. We define the operator $\delta$ as follows:
\begin{eqnarray*}
\delta : \cc_{k}  \to \cc_{k+1} , \quad\quad (\delta g)_{u_{1}\cdots u_{k+1}} = \sum_{ i=1}^{k+1} (-1)^{i} g_{u_{1}\cdots \hat{u}_{i} \cdots u_{k+1}}\,,
\end{eqnarray*}
where $\hat{u}_{i}$ means that this particular argument is omitted. 
For example, for $f\in \cc_{1} $ and $g\in \cc_{2} $ we have 
\begin{eqnarray}\label{e.delta}
\delta f_{st} = f_{t}-f_{s}
\quad
\text{ and }
\quad
\delta g_{sut} = g_{st}-g_{su}-g_{ut}.
\end{eqnarray} 
Let us now  introduce a general notion of  controlled rough process which will be used throughout the paper.

   \begin{Def}\label{def.control}
   Let $x$ and $y,y',y'',\dots, y^{(\ell-1)}$ be real-valued continuous processes on $[ 0,1] $ and assume that the initial values of $y^{(i)}$ equal to zero, namely,   $y^{(i)}_{0}=0$, $i=0,\dots, \ell-1$. Denote the 2-increments:  $x^{i}_{st} = (\delta x_{st})^{i}/i!$, $(s,t)\in\cs_{2}([0,1])$, $i=0,1,\dots,\ell-1$. For convenience, we   also write $y^{(0)} = y$, $y^{(1)} = y'$, $y^{(2)}= y''$,\dots, and $\bfy=(y^{(0)},\dots, y^{(\ell-1)})$. 
    We define the remainder processes 
   \begin{eqnarray}\label{e.r}
 r^{(\ell-1)}_{st} &=& \delta y^{(\ell-1)}_{st} 
    \notag
     \\
 r^{(k)}_{st} &=& \delta y^{(k)}_{st} - y^{(k+1)}_{s} x^{1}_{st}-\cdots  -y^{(\ell-1)}_{s} x^{\ell-k-1}_{st}, \qquad    k=0,1,\dots, \ell-2,
\end{eqnarray}
 for $(s,t) \in \cs_{2}([0,1]) $. 
We call $\bfy$ \emph{a  rough path controlled by $(x, \ell , \al )$ almost surely} for some constant $\al\in(0,1)$  
 if for any $\ep>0$
   there is    
a finite random variable $G_{\bfy}\equiv G_{\bfy,\ep}$ (that is, $G_{\bfy}<\infty$ almost surely)    such that 
 $|r^{(k)}_{st}|\leq G_{\bfy} (t-s)^{(\ell-k)\al-\ep}  $  for all $(s,t)\in\cs_{2}([0,1])$ and $k=0,1,\dots, \ell-1$. 
 We call $\bfy $      \emph{a     rough path  controlled by $(x,\ell, \al )$ in $L_{p}$} for some  $p>0$ if there exist constants $K>0$, $\al\in(0,1)$ such that $|r^{(k)}_{st}|_{L_{p}}\leq K (t-s)^{(\ell-k)\al}$ for all $(s,t)\in\cs_{2}([0,1])$  and $k=0,\dots, \ell-1$. 
\end{Def}

  \begin{remark}
  In some of our computations below we will rephrase \eqref{e.r} for $k=0$ as the following identity for $(s,t)\in \cs_{2}([0,1])$:
  \begin{eqnarray}\label{e.y}
y_{t} &=& \sum_{i=0}^{\ell-1} y^{(i)}_{s} x^{i}_{st} +r^{(0)}_{st},
\end{eqnarray}
where we take $x^{0}\equiv 1$ by convention.  
  \end{remark}
  
In the following we give some    examples of controlled rough paths defined in   Definition \ref{def.control}. 
  
  \begin{example}\label{example.control}
 Take $\al\in(0,1)$. Let  $x_{t}$, $t\in[0,1]$ be a real-valued continuous process whose sample paths are $(\al-\ep)$-H\"older continuous almost surely      for any $\ep>0$. 
For a   continuous function $V$ defined  on $\RR$, we define    the differential operator  $\cl_{V} $ such that  for any differentiable function $f$ we have $\cl_{V} f  = Vf' $. Denote   $\cl_{V}^{i}=\cl_{V}\circ\cdots\circ\cl_{V}$ the $i$th iteration of $\cl_{V}$.

\noindent(i)  Let $V$ be a sufficiently smooth function      on $\RR$. Set $z^{(i)}_{t} = \cl_{V}^{i}V(x_{t})$ for $i=0,\dots, \ell-1$. Then $(z, z', \dots, z^{(\ell-1)})$ is a rough path controlled by $(x,\ell, \al)$ almost surely.

\noindent(ii) 
 Let $ y$ be the solution of the differential equation: $d y_{t} =b(y_{t})dt+  V( y_{t})dx_{t}$ in the sense of \cite[Theorem 12.10]{FV10}, and assume that   the coefficient functions  $b$ and $V$ are  sufficiently smooth.  
Note that when $x$ is a Brownian motion  the differential equation coincides the classical Stratonovich-type SDE. 
   Let $y_{t}'=V(y_{t})$ and $y^{(i)}_{t} = \cl_{V}^{i-1} V(y_{t})$, $i=2,\dots, \ell-1$.  Then $(y, y',\dots, y^{(\ell-1)})$ is a rough path  controlled by $(x,\ell,\al)$ almost surely.

\noindent (iii) Let $\ell=[1/\al]$ and   let $(z, z', \dots, z^{(\ell-1)})$ be a rough path controlled by $(x,\ell, \al)$ almost surely.   Let $y$ be    the rough integral $y_{t}:=\int_{0}^{t}z_{s}dx_{s}$, $t\in[0,1]$  in the   sense of \cite{G04}. An explicit example of rough integral is  $y_{t}=\int_{0}^{t}V(x_{s})dx_{s}$.  Denoting $y'=z$, \dots, $y^{(\ell)}=z^{(\ell-1)}$, then $(y, y',\dots, y^{(\ell)})$ is a rough path controlled by $(x,\ell+1,\al)$ almost surely.  
\end{example}

By  Definition \ref{def.control} it is easy to show  that  the partial sequence   of $\bfy $ and the functions of $y$ are both   controlled rough paths. We state this fact and omit the proof for sake of conciseness:  
  \begin{lemma}\label{lem.control}
Let $\bfy$ be a   rough path controlled by $(x,\ell , \al)$ almost surely (resp., in $L_{p}$ for some $p>0$). Then 

\noindent (i) For any $i=0,\dots, \ell-1$,  $(y^{(i)},   \dots, y^{(\ell-1)})$ is a  rough   path controlled by $(x,\ell-i, \al )$  almost surely (resp., in $L_{p}$).  

\noindent (ii)  Let $f:\RR\to\RR$ be a continuous function which  has   derivatives up to order $(L-1)$ and the $(L-1)$th derivative $f^{(L-1)}$ is Lipschitz.  Let $z^{(0)}_{s} = f(y_{s})$ and 
\begin{eqnarray*}
z_{s}^{(r)}=\sum_{i=1}^{r}  \frac{f^{(i)}(y_{s})}{i!} \sum_{\substack{ 1\leq  j_{1},\dots, j_{i} \leq (L\wedge \ell) -1\\ j_{1}+\cdots+j_{i}=r } }
 \frac{r!}{j_{1}!\cdots j_{i}!} y^{(j_{1})}_{s}\cdots y^{(j_{i})}_{s} 
\end{eqnarray*}
 \text{for } $s\in[0,1]$ and $r=0,\dots, (L\wedge \ell) -1$. For example, we have   $z^{(1)} = f'(y_{s}) y'_{s}$ and $z^{(2)} =  f''(y_{s})(y'_{s})^{2}+f'(y_{s})y''_{s}$. 
 Then $(z^{(0)}, \dots, z^{((L\wedge \ell)-1)})$ is a   rough path controlled by $(x, L\wedge \ell , H)$ almost surely   (resp., in $L_{p}$).
\end{lemma}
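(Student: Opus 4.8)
The plan is to establish Lemma \ref{lem.control} by unwinding the definition of a controlled rough path and reducing both assertions to the remainder estimates in \eqref{e.r}. For part (i), the key observation is purely algebraic: if $\bfy$ is controlled by $(x,\ell,\al)$ with remainders $r^{(k)}$ as in \eqref{e.r}, then the truncated tuple $(y^{(i)},\dots,y^{(\ell-1)})$ has exactly the same remainder processes $r^{(k)}$ for $k=i,\dots,\ell-1$, merely relabelled so that $y^{(i)}$ plays the role of the zeroth component. First I would write out the defining remainder for the truncated tuple and check, term by term against \eqref{e.r}, that it coincides with the original $r^{(k)}$; the Gubinelli-type expansion for $\delta y^{(k)}$ uses only the coefficients $y^{(k+1)},\dots,y^{(\ell-1)}$ that survive truncation, so no new terms appear. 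The bound $|r^{(k)}_{st}|\leq G_{\bfy}(t-s)^{(\ell-k)\al-\ep}$ (resp.\ the $L_p$ bound) then transfers verbatim, and since $(\ell-i)-(k-i)=\ell-k$ the exponents are consistent with the definition of control by $(x,\ell-i,\al)$.

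For part (ii), I would proceed by Taylor expansion of the composed process $z^{(0)}_s=f(y_s)$. The natural candidate for the $r$th Gubinelli derivative is the Fa\`a di Bruno expression displayed in the statement, obtained by formally substituting the expansion $\delta y_{st}=\sum_{j\geq 1} y^{(j)}_s x^{j}_{st}+r^{(0)}_{st}$ into the Taylor expansion of $f(y_t)-f(y_s)$ and collecting, for each power $x^{r}_{st}$, all products of Gubinelli derivatives whose indices sum to $r$. The core computation is to verify that with $z^{(r)}$ chosen as in the statement, the resulting remainder
\begin{eqnarray*}
R^{(k)}_{st}=\delta z^{(k)}_{st}-\sum_{m=1}^{(L\wedge\ell)-k-1} z^{(k+m)}_s x^{m}_{st}
\end{eqnarray*}
satisfies $|R^{(k)}_{st}|\lesssim G_{\bfy}(t-s)^{((L\wedge\ell)-k)\al-\ep}$ (resp.\ the analogous $L_p$ estimate). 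Here the factor $L\wedge\ell$ enters because the expansion is limited both by the smoothness of $f$ (one can differentiate only $L-1$ times before hitting the Lipschitz $f^{(L-1)}$) and by the available regularity of $\bfy$ (only $y^{(0)},\dots,y^{(\ell-1)}$ are controlled).

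The key step, and the place where the estimate is genuinely earned, is controlling $R^{(0)}_{st}$ for the base case $f(y_t)-f(y_s)-\sum_{m\geq 1} z^{(m)}_s x^{m}_{st}$. I would expand $f(y_t)$ around $f(y_s)$ to order $(L\wedge\ell)-1$ with a Lagrange or integral remainder, replace each power $(\delta y_{st})^{m}$ using \eqref{e.y}, and then sort the resulting terms into those reproducing $\sum_m z^{(m)}_s x^{m}_{st}$ and genuine error terms. Each error term either carries an explicit remainder factor $r^{(j)}_{st}$ (which is small by the control hypothesis on $\bfy$), or is a product of increments $x^{j}_{st}$ whose total order exceeds $(L\wedge\ell)-1$, or comes from the Lipschitz bound on $f^{(L-1)}$ applied to $\delta y_{st}$; in every case the accumulated H\"older exponent in $(t-s)$ is at least $((L\wedge\ell))\al-\ep$. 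For general $k$ the bookkeeping is identical but notationally heavier, and I expect this Fa\`a di Bruno accounting — matching the combinatorial coefficients $r!/(j_1!\cdots j_i!)$ against the multinomial expansion of $(\delta y_{st})^{\bullet}$ and verifying that no lower-order power survives in $R^{(k)}$ — to be the main obstacle, which is precisely why the authors state it and omit the proof for conciseness. In the $L_p$ case one additionally uses H\"older's inequality to split the $L_p$ norm of each product of factors, together with the moment bounds on $r^{(j)}$ and on the increments $x^{j}$, but this is routine once the algebraic decomposition is in hand.
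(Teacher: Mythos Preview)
Your proposal is correct and follows the natural route. The paper itself omits the proof entirely (``We state this fact and omit the proof for sake of conciseness''), so there is no alternative argument to compare against; the approach you sketch---direct identification of the truncated remainders for part (i), and Taylor expansion combined with the Fa\`a di Bruno bookkeeping for part (ii)---is exactly the standard verification one would supply, and your identification of the three sources of error (remainder factors $r^{(j)}$, products of increments of total order exceeding $(L\wedge\ell)-1$, and the Lipschitz bound on $f^{(L-1)}$) is accurate.
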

  
Let us also    recall an algebraic  result from \cite[Lemma 2.5]{LT20}. 
  \begin{lemma}\label{lem.dr}
Let $x$, $\bfy$ and $r^{(i)}$, $i=0,\dots, \ell-1$ be continuous processes satisfying \eqref{e.r}. Then we have the following relation: 
\begin{eqnarray*}
\delta r^{(0)}_{sut}
  &=&
\sum_{i=1}^{\ell-1} r^{(i)}_{su} x^{i}_{ut} \,. 
\end{eqnarray*}
 
\end{lemma}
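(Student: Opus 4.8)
The final statement is Lemma~\ref{lem.dr}, the algebraic identity $\delta r^{(0)}_{sut}=\sum_{i=1}^{\ell-1} r^{(i)}_{su}x^{i}_{ut}$. The plan is to compute $\delta r^{(0)}_{sut}$ directly from the definition of $\delta$ on a $1$-increment, namely $\delta r^{(0)}_{sut}=r^{(0)}_{st}-r^{(0)}_{su}-r^{(0)}_{ut}$, and to rewrite each of the three terms using the defining relation \eqref{e.r} for $k=0$, written in the convenient form \eqref{e.y}, $y_{t}=\sum_{i=0}^{\ell-1} y^{(i)}_{s}x^{i}_{st}+r^{(0)}_{st}$.

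First I would substitute \eqref{e.y} to express $r^{(0)}_{st}=\delta y_{st}-\sum_{i=1}^{\ell-1} y^{(i)}_{s}x^{i}_{st}$ (the $i=0$ term is $y_s$ since $x^0\equiv1$, which gets absorbed by $\delta y_{st}=y_t-y_s$), and likewise for $r^{(0)}_{su}$ and $r^{(0)}_{ut}$. The $\delta y$ pieces telescope: $\delta y_{st}-\delta y_{su}-\delta y_{ut}=\delta(\delta y)_{sut}=0$ since $\delta\circ\delta=0$ for the underlying $0$-increment $y$. Thus the whole expression reduces to the $x$-terms, and it remains to show
\begin{eqnarray*}
-\sum_{i=1}^{\ell-1} y^{(i)}_{s}x^{i}_{st}+\sum_{i=1}^{\ell-1} y^{(i)}_{s}x^{i}_{su}+\sum_{i=1}^{\ell-1} y^{(i)}_{u}x^{i}_{ut}
=\sum_{i=1}^{\ell-1} r^{(i)}_{su}x^{i}_{ut}.
\end{eqnarray*}

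The key computational input is the binomial/Chen-type relation for the increments $x^{i}_{st}=(\delta x_{st})^{i}/i!$. Since $\delta x_{st}=\delta x_{su}+\delta x_{ut}$, expanding $(\delta x_{su}+\delta x_{ut})^{i}$ by the binomial theorem yields $x^{i}_{st}=\sum_{j+m=i} x^{j}_{su}x^{m}_{ut}$, i.e. the multiplicative structure $x^{i}_{st}=\sum_{j=0}^{i}x^{j}_{su}x^{i-j}_{ut}$. Substituting this into the $-\sum_i y^{(i)}_s x^i_{st}$ term and regrouping by the power of $x_{ut}$ that appears, I would collect, for each fixed $m\geq1$, the coefficient of $x^{m}_{ut}$; the surviving terms should reorganize precisely into $\big(\delta y^{(m)}_{su}-\sum_{i}y^{(i+m)}_{s}x^{i}_{su}\big)x^{m}_{ut}$, which by the definition \eqref{e.r} of $r^{(m)}_{su}$ equals $r^{(m)}_{su}x^{m}_{ut}$.

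The only real obstacle is bookkeeping: keeping the index shifts in the double sum straight and verifying that the $m=0$ contribution cancels against the $x^{i}_{su}$ terms while the pure $x_{su}$-powers (the $j=i$, $m=0$ slice) reproduce $r^{(m)}_{su}$ with the correct tail of the control expansion. This is a finite, purely algebraic matching of coefficients with no analytic content, so once the binomial identity for $x^{i}_{st}$ is in hand the result follows by comparing coefficients of each $x^{m}_{ut}$; the statement is quoted from \cite[Lemma 2.5]{LT20}, so I would present the coefficient-matching argument concisely rather than belaboring each term.
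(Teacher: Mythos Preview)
Your argument is correct. The paper does not actually supply a proof of this lemma; it simply records the statement and cites \cite[Lemma~2.5]{LT20}. Your direct computation is the natural one: the telescoping of the $\delta y$ terms, the binomial identity $x^{i}_{st}=\sum_{j+m=i}x^{j}_{su}x^{m}_{ut}$ coming from $\delta x_{st}=\delta x_{su}+\delta x_{ut}$, and the coefficient matching against the definition \eqref{e.r} of $r^{(m)}_{su}$ all go through exactly as you describe. In particular, after the $m=0$ slice cancels the $\sum_i y^{(i)}_s x^{i}_{su}$ term, the coefficient of $x^{m}_{ut}$ for $m\geq1$ is $y^{(m)}_{u}-\sum_{j=0}^{\ell-1-m}y^{(m+j)}_{s}x^{j}_{su}=\delta y^{(m)}_{su}-\sum_{j=1}^{\ell-1-m}y^{(m+j)}_{s}x^{j}_{su}=r^{(m)}_{su}$, which is precisely the claim. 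There is nothing to add.
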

  
  
\subsection{Discrete rough integrals}
We introduce some   ``discrete'' integrals defined as Riemann type sums. 
 Namely, let $f$ and $g$ be functions     on   $\cs_{2}([0,1])$.
Let $\cd_{n}=\{ 0=t_{0}<\cdots<t_{n}=1 \}$  be  a generic partition of $[0,1]$. 
  We define  the    discrete  integral of $f$ with respect to $g$ as:
  \begin{eqnarray}\label{e.jfg}
\cj_{s}^{t} (f, g) 
&:=&
\sum_{s\leq t_{k}  <t }   
  f_{s t_{k}} \otimes g_{t_{k}t_{k+1}} , \quad\quad  (s,t) \in \cs_{2} ([0,1]) ,
\end{eqnarray}
where  we  use the convention that $\cj_{s}^{t}(f,g)=0$ whenever $\{t_{k}: s\leq t_{k}<t\} = \emptyset$. 
We highlight   that $f$ in \eqref{e.jfg} is a function of two variables. 
 Similarly, if  $f  $ is a path on $[0,1]$, then we define the  discrete  integral of $f$ with respect to $g$ as:
\begin{eqnarray}\label{e.jfg2}
\cj_{s}^{t} (f, g) 
&:=&
\sum_{s\leq t_{k}  <t } 
   f_{ t_{k}} \otimes g_{t_{k}t_{k+1}}  , \quad\quad  (s,t) \in \cs_{2}([0,1]).
\end{eqnarray}
 

\subsection{Chaos expansion and fractional Brownian motions}
  
  Let $d {\ga} (x) = (2\pi)^{-1/2} e^{-x^{2}/2} dx$ be the standard Gaussian measure on the real line, and let $f \in L_{2}(\ga)$ be such that $\int_{\RR} f(x) d\ga (x) =0$. It is well-known that the function $f$ can be expanded into a series of Hermite polynomials as follows:
  \begin{eqnarray*}
f(x) &=& \sum_{q=d}^{\infty} a_{q} H_{q}(x),
\end{eqnarray*}
where  $d\geq 1$ is some integer and $H_{q} (x) = (-1)^{q} e^{\frac{x^{2}}{2}} \frac{d^{q}}{dx^{q}}  e^{-\frac{x^{2}}{2}}    $ is the Hermite polynomial of order $q$.
Recall that we have the iteration formula: $H_{q+1}(t) = x H_{q}(x)-H_{q}'(x)$.
 If $a_{d}\neq 0$, then $d$ is called the \emph{Hermite rank} of the function~$f$. Since $f \in L_{2}(\ga)$, we   have $\|f\|_{L_{2}(\ga)}^{2}=\sum_{q=d}^{\infty} |a_{q}|^{2} q! <\infty $. 
   
 Let $x$ be a standard fractional Brownian motion (fBm for short) with Hurst parameter $H\in(0,1)$, that is $x$ is a continuous Gaussian process such that 
 \begin{eqnarray*}
\mE[x_{s}x_{t}] = \frac12 (|s|^{2H}+|t|^{2H}-|s-t|^{2H}).
\end{eqnarray*}
  The fBm  $x$ is almost surely $\ga$-H\"older continuous for all $\ga<H$.  Define the covariance function $\rho$ by 
 \begin{eqnarray}\label{e.rho}
\rho (k) = \mE(\delta x_{01}\delta x_{k,k+1}).
\end{eqnarray}
Then, 
 whenever $H <\frac12$, we have $\sum_{k\in \ZZ}\rho (k) = 0$. 
 Let $\ch$ be the completion of the space of indicator functions with respect to the inner product $\langle \mathbf{1}_{[u,v]},  \mathbf{1}_{[s,t]}  \rangle_{\ch} =\mE(\delta x_{uv} \delta x_{st})$.

The following result shows that given two   sequences   of   stable convergence    and  convergence in probability, respectively, their joint sequence is also of stable convergence. Recall that $X_{n}$ is called  convergent to $X$ stably if $(X_{n}, Z)\to(X,Z)$ in distribution as $n\to\infty$ for any $Z\in\cf$.   The reader is referred to \cite{AE, JS} for an introduction to  stable convergence. 
 \begin{lemma}\label{lem.stable}
Let   $Y_{n}^{(1)} $, $Y_{n}^{(2)} $, $n\in\NN$ be two sequences of random variables and   denote the   $\si$-field: $\cf^{Y} = \si\{Y_{n}^{(i)},i=1,2, n\in\NN\}$.  Let  $Y^{(1)}$ be a random variable  
such that the stable convergence  $(Y_{n}^{(1)}, Z)\to (Y^{(1)}, Z)$  as $n\to\infty$ holds for any $Z\in\cf^{Y}$. Suppose     that we have the convergence in probability $Y_{n}^{(2)} \to Y^{(2)} $ as $n\to\infty$ for some random variable  $Y^{(2)}$. Then   the stable convergence $(Y^{(1)}_{n}, Y^{(2)}_{n}, Z)\to (Y^{(1)} , Y^{(2)}, Z )$ as $n\to\infty$  holds for any $Z\in\cf^{Y}$. In particular, we have  the stable convergence  $(Y^{(1)}_{n}+ Y^{(2)}_{n}, Z) \to (Y^{(1)} + Y^{(2)}, Z)$ as $n\to\infty$  for any $Z\in\cf^{Y}$. 
\end{lemma}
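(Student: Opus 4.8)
The plan is to deduce the joint stable convergence from the defining property of stable convergence by \emph{enlarging the conditioning variable}, and then to replace the fixed limit $Y^{(2)}$ by the moving sequence $Y_{n}^{(2)}$ through a Slutsky-type argument. As a preliminary reduction I would first observe that the limit $Y^{(2)}$ may be taken to be $\cf^{Y}$-measurable: since $Y_{n}^{(2)}\to Y^{(2)}$ in probability, some subsequence converges almost surely, and as each $Y_{n}^{(2)}$ is $\cf^{Y}$-measurable, the almost sure limit agrees with an $\cf^{Y}$-measurable random variable. Hence we may and do assume $Y^{(2)}\in\cf^{Y}$.

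The central step is as follows. Fix an arbitrary $Z\in\cf^{Y}$. Because $Y^{(2)}\in\cf^{Y}$, the pair $(Y^{(2)},Z)$ is again $\cf^{Y}$-measurable, so I would apply the stable convergence hypothesis with the \emph{enlarged} conditioning variable $(Y^{(2)},Z)$ in place of $Z$. This yields the convergence in distribution
\[
(Y_{n}^{(1)}, Y^{(2)}, Z)\;\longrightarrow\;(Y^{(1)}, Y^{(2)}, Z),\qquad n\to\infty.
\]
Here one uses that stable convergence with respect to $\cf^{Y}$, although phrased in the excerpt against a single conditioning variable $Z\in\cf^{Y}$, automatically holds against $\cf^{Y}$-measurable vector-valued (indeed Polish-valued) conditioning variables; the vector case reduces to the scalar one by testing against products of bounded continuous functions together with a standard monotone-class argument.

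It then remains to upgrade the fixed second coordinate $Y^{(2)}$ to the sequence $Y_{n}^{(2)}$. I would write
\[
(Y_{n}^{(1)}, Y_{n}^{(2)}, Z) = (Y_{n}^{(1)}, Y^{(2)}, Z) + (0,\, Y_{n}^{(2)}-Y^{(2)},\, 0),
\]
where the first summand converges in distribution to $(Y^{(1)}, Y^{(2)}, Z)$ by the previous display, while the perturbation $(0, Y_{n}^{(2)}-Y^{(2)}, 0)$ tends to $0$ in probability by assumption. The converging-together lemma (a form of Slutsky's theorem) then gives
\[
(Y_{n}^{(1)}, Y_{n}^{(2)}, Z)\;\longrightarrow\;(Y^{(1)}, Y^{(2)}, Z),\qquad n\to\infty,
\]
which is precisely the asserted stable convergence, valid for every $Z\in\cf^{Y}$. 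The final ``in particular'' assertion follows immediately by the continuous mapping theorem applied to the addition map $(a,b,z)\mapsto(a+b,z)$.

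The only genuinely delicate point is the middle step: passing from ordinary joint convergence in distribution to \emph{stable} joint convergence hinges on being permitted to condition on the full $\cf^{Y}$-measurable pair $(Y^{(2)},Z)$, which is exactly the additional strength that stable convergence carries over plain convergence in distribution. Once that reduction is in place, the remaining arguments (measurability of $Y^{(2)}$, Slutsky, continuous mapping) are entirely routine.
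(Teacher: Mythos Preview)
Your argument is correct and follows essentially the same route as the paper's proof: first enlarge the conditioning variable from $Z$ to the $\cf^{Y}$-measurable pair $(Y^{(2)},Z)$ to obtain $(Y_{n}^{(1)},Y^{(2)},Z)\to(Y^{(1)},Y^{(2)},Z)$, then apply a Slutsky-type replacement of $Y^{(2)}$ by $Y_{n}^{(2)}$. Your write-up is in fact more careful than the paper's on two points the latter leaves implicit, namely the $\cf^{Y}$-measurability of $Y^{(2)}$ and the passage from scalar to vector conditioning variables.
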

\begin{proof}
Since $Y^{(2)}_{n}-Y^{(2)}\to 0$ in probability it follows that the two sequences $(Y^{(1)}_{n}, Y^{(2)}+(Y^{(2)}_{n}-Y^{(2)}), Z)$ and $(Y^{(1)}_{n}, Y^{(2)} , Z)$ have the same limit. On the other hand, by the stable convergence of $Y^{(1)}_{n}$ and the fact that $Y^{(2)}\in\cf^{Y}$ we have the convergence $(Y^{(1)}_{n}, Y^{(2)} , Z) \to (Y^{(1)} , Y^{(2)} , Z)$. We conclude that  the   convergence   
$(Y^{(1)}_{n} , Y^{(2)}_{n} , Z)\to (Y^{(1)} , Y^{(2)} , Z)$ 
as $n\to\infty$ holds. This completes the proof. 
\end{proof}

\section{Upper-bound estimate and limit theorem for weighted random sums}\label{section.bd}

In this section we    derive some     useful     estimates and limit theorem results for   weighted random sums related to fBm.  

%

\subsection{Upper-bound estimate of weighted random sums} 
We   prove  a general       upper-bound estimate result  
for weighted random sums. In the second part of the subsection      we     apply this   estimate result  to  weighted sums involving  fBms. Recall that for a continuous process $x_{t}$, $t\in[0,1]$ and an integer  $i\in\NN$ we denote the 2-increment:   $x^{i}_{st} = (\delta x_{st})^{i}/i!$, $(s,t)\in\cs_{2}([0,1])$.
 

 \begin{prop} \label{prop.yhbd}
Let $x$ be a continuous process on $[0,1]$. 
Let $\bfy= (y^{(0)},\dots, y^{(\ell-1)})$ be a   rough path on $[0,1]$  controlled by $(x,\ell, \al)$ in $L_{2}$ for some $\al>0$ and $\ell\in \NN$, and let   $(r^{(i)}, i=0,\dots, \ell-1)$ be the remainder processes of $\bfy$     defined in Definition \ref{def.control}. Let  $h$ be a  $1$-increment defined on $\cs_{2}(\ll 0, 1 \rr)$.   Let    $\be_{i}\in [0,1]$, $i=0,1,\dots,\ell-1$ be some constants such that    
\begin{eqnarray}\label{e.bei}
\be  := \min_{i=0,\dots, \ell-1} \{ (\ell-i)\al +\be_{i} \}>1 .
\end{eqnarray}
Suppose that there exists a constant $K>0$  such that 
\begin{eqnarray}\label{e.xih}
| \cj_{s}^{t} (x^{i}, h) |_{L_{2}} \leq K (t-s)^{\be_{i}}   
\end{eqnarray}
for  any  $(s,t)\in\cs_{2}([0,1])$ satisfying $t-s\geq 1/n$. 
Then  we can find a constant $K>0$ independent of $n$ such that the following estimates hold:
\begin{eqnarray}\label{e.yhbd1}
| \cj_{s}^{t} ( {r}^{(0)},  h) |_{L_{1}}  \leq   K   (t-s)^{ \be}
\qquad
\text{and}
\qquad
| \cj_{s}^{t} ( y,  h) |_{L_{1}}  \leq   K   (t-s)^{ \be_{0}}
\end{eqnarray}
for    $(s,t)\in\cs_{2}([0,1])$ such that $t-s\geq 1/n$. 
\end{prop}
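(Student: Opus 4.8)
The plan is to read the first estimate as a discrete sewing (reconstruction) problem for the two-index quantity $A_{st}:=\cj_s^t(r^{(0)},h)=\sum_{s\le t_k<t}r^{(0)}_{s t_k}h_{t_k t_{k+1}}$, and then to obtain the second estimate from the first by expanding $y$ against its controlling processes. First I would record that $A$ vanishes on single partition intervals, $A_{t_k t_{k+1}}=r^{(0)}_{t_k t_k}h_{t_k t_{k+1}}=0$, so the whole task is to recover $A_{st}$ on long intervals from its additive defect $\delta A_{sut}=A_{st}-A_{su}-A_{ut}$. Splitting $A_{st}-A_{su}$ over $t_k\in\ll u,t\rr$, writing $r^{(0)}_{s t_k}-r^{(0)}_{u t_k}=\delta r^{(0)}_{s u t_k}+r^{(0)}_{su}$, and inserting the algebraic identity $\delta r^{(0)}_{s u t_k}=\sum_{i=1}^{\ell-1}r^{(i)}_{su}x^i_{u t_k}$ from Lemma \ref{lem.dr}, I obtain the key factorization
\begin{eqnarray*}
\delta A_{sut}=\sum_{i=0}^{\ell-1} r^{(i)}_{su}\,\cj_u^t(x^i,h),
\end{eqnarray*}
where the $i=0$ term comes from the constant $r^{(0)}_{su}$ together with the convention $x^0\equiv1$.

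Next I would estimate each factor in $L_1$. By Cauchy--Schwarz, $|r^{(i)}_{su}\,\cj_u^t(x^i,h)|_{L_1}\le |r^{(i)}_{su}|_{L_2}\,|\cj_u^t(x^i,h)|_{L_2}$. The controlled-path hypothesis gives $|r^{(i)}_{su}|_{L_2}\le K(u-s)^{(\ell-i)\al}$, while \eqref{e.xih} gives $|\cj_u^t(x^i,h)|_{L_2}\le K(t-u)^{\be_i}$ (the endpoints $u,t$ being partition points, the required scale $t-u\ge 1/n$ holds). Since $(u-s),(t-u)\le t-s\le 1$ and $(\ell-i)\al+\be_i\ge\be$ for every $i$ by \eqref{e.bei}, each summand is $\le K(t-s)^{\be}$, whence $|\delta A_{sut}|_{L_1}\le K(t-s)^{\be}$ with $\be>1$.

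With the defect bound in hand, the first inequality in \eqref{e.yhbd1} follows from a discrete sewing argument. Fix partition points $s<t$ and let $s=v_0<\cdots<v_m=t$ enumerate the points of $\cd_n$ in $[s,t]$; then $\sum_j A_{v_j v_{j+1}}=0$, and removing an interior point $v_j$ from a sub-partition changes the corresponding Riemann sum by exactly $\delta A_{v_{j-1}v_j v_{j+1}}$. Coarsening successively down to the single interval $\{s,t\}$ therefore expresses $A_{st}$ as a telescoping sum of defect terms. At each stage a pigeonhole choice removes a point with $v_{j+1}-v_{j-1}\le \frac{2}{m'-1}(t-s)$, so that $|A_{st}|_{L_1}\le K(t-s)^{\be}\sum_{m'\ge2}(m'-1)^{-\be}\le K\,\zeta(\be)\,(t-s)^{\be}$, the series converging precisely because $\be>1$. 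I expect the bookkeeping of this sewing step --- especially ensuring that all intermediate sub-intervals stay above the scale $1/n$ so that \eqref{e.xih} remains applicable throughout the coarsening --- to be the main technical obstacle.

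For the second estimate I would expand $y$ about the left endpoint via \eqref{e.y}: writing $y_{t_k}=\sum_{i=0}^{\ell-1}y^{(i)}_s x^i_{s t_k}+r^{(0)}_{s t_k}$ and summing against $h_{t_k t_{k+1}}$ gives $\cj_s^t(y,h)=\sum_{i=0}^{\ell-1}y^{(i)}_s\,\cj_s^t(x^i,h)+\cj_s^t(r^{(0)},h)$. Taking $s=0$ in \eqref{e.y} and using $y^{(i)}_0=0$ yields $y^{(i)}_s=r^{(i)}_{0s}$, hence $|y^{(i)}_s|_{L_2}\le K s^{(\ell-i)\al}\le K$; by Cauchy--Schwarz and \eqref{e.xih} the $i$-th term is then $\le K(t-s)^{\be_i}$, and the remainder term is $\le K(t-s)^{\be}$ by the first estimate. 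Collecting terms gives $|\cj_s^t(y,h)|_{L_1}\le K\sum_{i=0}^{\ell-1}(t-s)^{\be_i}+K(t-s)^{\be}$; since $\be>1\ge\be_0$ and the higher functionals $\cj(x^i,h)$ with $i\ge1$ inherit the extra smallness of $(\delta x)^i$ so that $\be_i\ge\be_0$, the $i=0$ contribution $y_s\,\cj_s^t(x^0,h)$ dominates and the claimed bound $K(t-s)^{\be_0}$ follows.
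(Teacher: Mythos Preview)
Your approach mirrors the paper's exactly: factor $\delta A_{sut}=\sum_{i=0}^{\ell-1} r^{(i)}_{su}\,\cj_u^t(x^i,h)$ via Lemma~\ref{lem.dr}, bound in $L_1$ by Cauchy--Schwarz, apply discrete sewing (the paper cites \cite[Lemma~2.5]{LT20} rather than spelling out the greedy coarsening you describe), and deduce the second estimate from the first via the expansion~\eqref{e.y}.

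On the obstacle you flagged, the paper's resolution is short: the only triple in $\cs_3(\ll s,t\rr)$ on which \eqref{e.xih} is not directly available is $(u,t_{k'},t)$ with $t_{k'}$ the last partition point below a non-grid endpoint $t$, and there $\cj_{t_{k'}}^t(x^i,h)=x^i_{t_{k'}t_{k'}}h_{t_{k'}t_{k'+1}}=0$ for $i\ge1$ while $|\cj_{t_{k'}}^t(1,h)|_{L_2}=|h_{t_{k'}t_{k'+1}}|_{L_2}\le(1/n)^{\be_0}\le(t-s)^{\be_0}$; so the defect bound extends to the full discrete simplex without ever invoking \eqref{e.xih} on a sub-scale interval. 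One caveat on the second estimate: your assertion that $\be_i\ge\be_0$ because ``higher functionals inherit the extra smallness of $(\delta x)^i$'' is a heuristic, not a consequence of the stated hypotheses---nothing in \eqref{e.bei} forces the $\be_i$ to be monotone. The paper's proof is equally silent at this step, and in every application in the paper the $\be_i$ are in fact nondecreasing in $i$, so treat this as an implicit assumption of the proposition rather than a flaw in your argument.
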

\begin{proof} 
Denote  $R_{st}:=\cj_{s}^{t} (r^{(0)}, h)$ for $(s,t) \in\cs_{2}([0,1])$.  Recall that the operator $\delta  $ for 2-increment is defined in \eqref{e.delta}. So, for $(s,u,t)\in \cs_{3}([0,1])$, we have 
\begin{eqnarray}\label{e.dR1}
\delta R_{sut} &=&   \cj_{s}^{t} ( {r}^{(0)},  h) -\cj_{s}^{u} (r^{(0)},  h)-\cj_{u}^{t} (r^{(0)},  h)  
\nonumber 
\\
&=& \sum_{u\leq t_{k}<t} ( r^{(0)}_{st_{k}} - r^{(0)}_{ut_{k}} ) h_{t_{k}t_{k+1}}
 .                                 
\end{eqnarray}
Note that  by definition of $\delta r^{(0)}$ we have the relation   $r^{(0)}_{st_{k}} -r^{(0)}_{ut_{k}} = \delta r^{(0)}_{sut_{k}} +r^{(0)}_{su} $. Substituting this   into \eqref{e.dR1} and then    invoking Lemma \ref{lem.dr}  
 we   obtain 
\begin{eqnarray}\label{e.dR}
\delta R_{sut}&=&  r^{(0)}_{su} \cj_{u}^{t}(  1 , h )+\sum_{i=1}^{\ell-1} r^{(i)}_{su} \cj_{u}^{t}(   x^{i}  , h ).
\end{eqnarray}
 We can now bound $\delta R$ as follows:  Taking the $L_{1}$-norm on both sides of \eqref{e.dR} gives 
 \begin{eqnarray}\label{e.drl1}
|\delta R_{sut}|_{L_{1}} \leq \sum_{i=0}^{\ell-1} |r^{(i)}_{su}|_{L_{2}} \cdot | \cj_{u}^{t}(   x^{i}  ,  h )|_{L_{2}} .  
\end{eqnarray}
Applying  condition  \eqref{e.xih} to $| \cj_{u}^{t}(   x^{i}  ,  h )|_{L_{2}}$ in \eqref{e.drl1} and invoking the relation  $|r^{(i)}_{st}|_{L_{p}}\leq K (t-s)^{(\ell-i)\al}$ given in  Definition \ref{def.control} we get  
\begin{eqnarray}\label{e.drbd}
|\delta R_{sut}|_{L_{1}}  \lesssim  \sum_{i=0}^{\ell-1} (u-s)^{(\ell -i)\al} (t-u)^{\be_{i}}  \lesssim  (t-s)^{\be} 
\end{eqnarray}
for $(s,u,t)\in \cs_{3}([0,1])$ such that $t-u\geq 1/n$, 
where   $\be$ is defined in \eqref{e.bei}. 

Take $(s,t)\in\cs_{2}([0,1])$ such that $t-s\geq 1/n$. 
Consider the partition $\ll s,t \rr  $ of the interval $[s,t]$: $s<t_{k}<\cdots<t_{k'}<t$, where $k$ and $k'$ are such that $t_{k-1}\leq s<t_{k}$ and $t_{k'}<t\leq t_{k'+1} $.  
In the following we show    that \eqref{e.drbd} holds for all $(u_{1}, u_{2}, u_{3})\in \cs_{3}(\ll s,t \rr)$.  In view of \eqref{e.drbd} it remains to show that the estimate \eqref{e.drbd} holds for $|\delta R_{ut_{k'}t}|_{L_{1}}$, $u\in \ll s,t\rr:u\leq t_{k'}$.  Indeed, 
by definition \eqref{e.jfg} we have $\cj_{t_{k'}}^{t}(x^{i}, h)=0$ and 
\begin{eqnarray*}
|\cj_{t_{k'}}^{t}(1, h)|_{L_{2}}=|h_{t_{k'}t_{k'+1}}|_{L_{2}}\leq (1/n)^{\be_{0}}\leq (t-s)^{\be_{0}}. 
\end{eqnarray*}
 Applying these estimates to the right-hand side of \eqref{e.drl1}  we obtain   
 the estimate \eqref{e.drbd}   for $|\delta R_{ut_{k'}t}|_{L_{1}}$. 

By   \eqref{e.jfg}    it is clear that  for any two consecutive partition points $u,v$ in $\ll s,t \rr$ and $u<v$   we have $R_{uv} = 0$. 
Applying the discrete sewing  lemma    \cite[Lemma 2.5]{LT20} to $R$ on the partition $\ll s,t \rr$ and then invoking the estimate \eqref{e.drbd} of $\delta R$  on $\cs_{3}(\ll s,t\rr)$   we obtain  
\begin{eqnarray*}
|  R_{st}|_{L_{1}} \lesssim (t-s)^{\be} .
\end{eqnarray*}
 This proves    the first   estimate in~\eqref{e.yhbd1}.  

Note  that by substituting the expression  \eqref{e.y} into $\cj_{s}^{t} (y, h)$ we get the relation 
 \begin{eqnarray}\label{e.yh}
\cj_{s}^{t} (y, h) =   \sum_{i=0}^{\ell-1} y^{(i)}_{s} \cj_{s}^{t}(x^{i}, h) + \cj_{s}^{t}(r^{(0)}, h) . 
\end{eqnarray}
 Applying \eqref{e.xih} and the first estimate in \eqref{e.yhbd1} to the right-hand side of \eqref{e.yh} we obtain  the  desired estimate of $\cj_{s}^{t} (y, h)$ in~\eqref{e.yhbd1}. 
\end{proof}

In the next result we   apply Proposition \ref{prop.yhbd} to weighted sums which involve  fBms. 

\begin{prop}\label{prop.yhbd2}
Let $x$ be a one-dimensional fBm with Hurst parameter $H \leq 1/2$. 
 Suppose that  $ (y ,y' , \dots, y^{(\ell-1)})$, $\ell\in\NN$ is a   process controlled by $(x,\ell, H-\ep)$ in $L_{2}$ for some sufficiently small  $\ep>0$.
Let  $f= \sum_{q=d}^{\infty} a_{q}H_{q} \in L_{2}(\RR,\ga)$ with  Hermite rank   $d>0$ and $f$   belongs to  the Soblev space $W^{2(\ell-1), 2}( \RR, \ga)$, where  $\ga $ denotes the standard Gaussian measure on the real line;   see e.g. Page 28 in \cite{N06}.  
   We define a family of increments $\{h^{n}; n\geq 1 \}$ by:
 \begin{eqnarray}\label{e.hst}
h^{n}_{st} :=  \sum_{s\leq t_{k}<t} f(n^{H} \delta x_{t_{k}t_{k+1}}), \quad\quad (s,t)\in \cs_{2}(\ll 0,1 \rr). 
\end{eqnarray}

\noindent (i) Suppose that $d> \frac{1}{2H}$ and that  
  $\ell  $ is  the least  integer such that $  \ell H+\frac12>1$, that is $\ell=[\frac{1}{2H}]+1$.  
Then there is a constant $K$ independent of $n$  such that  
\begin{eqnarray}\label{e.yhbdi}
|  \cj_{s}^{t} ( y , h^{n}  )  |_{L_{1}}  &\leq& K  n^{1/2} (t-s)^{ 1/2 }    
\end{eqnarray}
 for all $(s,t)\in \cs_{2}([ 0,1 ]) $ satisfying  $ t-s\geq1/n$. 

\noindent (ii) Suppose that $d\leq  \frac{1}{2H}$ and that  
  $\ell  =d+1$.   
Then  there is a constant $K$ independent of $n$ such that 
\begin{eqnarray}\label{e.yhbdii}
|  \cj_{s}^{t} ( y , h^{n}  )  |_{L_{1}}  &\leq& K  n^{1-dH} (t-s)^{1-dH}   
\end{eqnarray}
 for all $(s,t)\in \cs_{2}([ 0,1 ]) $  satisfying  $ t-s \geq1/n$. 
\end{prop}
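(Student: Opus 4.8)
The plan is to derive both bounds from Proposition~\ref{prop.yhbd}, applied to the increment $h^{n}$ after rescaling it by a suitable power of $n$ so that the constant in hypothesis \eqref{e.xih} is independent of $n$. Since $(y,\dots,y^{(\ell-1)})$ is controlled by $(x,\ell,H-\ep)$ in $L_{2}$, the only input that must be produced is the family of $L_{2}$-estimates \eqref{e.xih}, namely
\begin{eqnarray*}
|\cj_{s}^{t}(x^{i},h^{n})|_{L_{2}}\leq K\, n^{\theta}\,(t-s)^{\be_{i}},\qquad i=0,\dots,\ell-1,
\end{eqnarray*}
for appropriate exponents $\be_{i}\in[0,1]$ and a global factor $n^{\theta}$, together with the verification of \eqref{e.bei}. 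By \eqref{e.jfg} and \eqref{e.hst} the quantity to estimate is
\begin{eqnarray*}
\cj_{s}^{t}(x^{i},h^{n})=\frac{1}{i!}\sum_{s\leq t_{k}<t}(\delta x_{st_{k}})^{i}\,f(n^{H}\delta x_{t_{k}t_{k+1}}),
\end{eqnarray*}
a weighted random sum with a Gaussian weight $(\delta x_{st_{k}})^{i}$ and a functional $f$ of Hermite rank $d$.

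The core of the proof is a second-moment computation for $\cj_{s}^{t}(x^{i},h^{n})$ based on the chaos expansion $f=\sum_{q\geq d}a_{q}H_{q}$, and the decisive point is the dichotomy $i<d$ versus $i=d$. When $i<d$, each summand $(\delta x_{st_{k}})^{i}H_{q}(n^{H}\delta x_{t_{k}t_{k+1}})$ has Hermite rank at least $q-i\geq d-i\geq 1$ and is therefore centered; the diagonal part of the variance is of order $\sum_{k}(t_{k}-s)^{2iH}\sim n(t-s)^{2iH+1}$, while the off-diagonal terms are summable because $\sum_{m\in\ZZ}|\rho(m)|^{d}<\infty$ for $H\leq 1/2$ (recall $\rho(m)\sim|m|^{2H-2}$). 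This yields $|\cj_{s}^{t}(x^{i},h^{n})|_{L_{2}}\lesssim n^{1/2}(t-s)^{iH+1/2}$. When $i=d$, by contrast, the top contraction of $(\delta x_{st_{k}})^{d}$ against $H_{d}(n^{H}\delta x_{t_{k}t_{k+1}})$ leaves a nonvanishing chaos-zero (resonant) term,
\begin{eqnarray*}
\mE\big[(\delta x_{st_{k}})^{d}f(n^{H}\delta x_{t_{k}t_{k+1}})\big]=a_{d}\,d!\,\big(\mE[\delta x_{st_{k}}\cdot n^{H}\delta x_{t_{k}t_{k+1}}]\big)^{d},
\end{eqnarray*}
which is of order $n^{-dH}$ uniformly in $k$; summing over the $\sim n(t-s)$ points gives a mean of order $n^{1-dH}(t-s)$ that dominates the centered fluctuations, so that $|\cj_{s}^{t}(x^{d},h^{n})|_{L_{2}}\lesssim n^{1-dH}(t-s)$.

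It remains to repackage these bounds and check \eqref{e.bei}. In case (i), $d>\frac{1}{2H}$ and $\ell=[\frac{1}{2H}]+1$ force $i\leq\ell-1<d$ and $iH\leq 1/2$ for every admissible $i$, so only the first regime occurs; I take $\theta=1/2$ and $\be_{i}=iH+1/2\in[1/2,1]$, which gives $\be=\ell H+1/2-O(\ep)>1$; Proposition~\ref{prop.yhbd} applied to $n^{-1/2}h^{n}$ then gives $|\cj_{s}^{t}(y,n^{-1/2}h^{n})|_{L_{1}}\lesssim(t-s)^{1/2}$, which is \eqref{e.yhbdi}. In case (ii), $\ell=d+1$ so the index $i=d$ is reached; using $(n(t-s))^{1/2}\leq(n(t-s))^{1-dH}$ for $t-s\geq 1/n$ (valid since $dH\leq 1/2$) I rewrite the bounds for $i<d$ as $n^{1-dH}(t-s)^{1+(i-d)H}$ and keep $n^{1-dH}(t-s)$ for $i=d$, i.e. $\theta=1-dH$ and $\be_{i}=1+(i-d)H\in[1-dH,1]$; then $\be=1+H-O(\ep)>1$, and Proposition~\ref{prop.yhbd} applied to $n^{-(1-dH)}h^{n}$ gives $|\cj_{s}^{t}(y,n^{-(1-dH)}h^{n})|_{L_{1}}\lesssim(t-s)^{1-dH}$, which is \eqref{e.yhbdii}.

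The main obstacle is the second-moment/chaos computation underlying the two regimes. The difficulty is twofold: first, $f$ has an infinite chaos expansion (so a finite Malliavin integration by parts is unavailable), and the weight $(\delta x_{st_{k}})^{i}$ is strongly correlated with $f(n^{H}\delta x_{t_{k}t_{k+1}})$, so every off-diagonal covariance involves mixed contractions of the ``large'' increments $\delta x_{st_{k}}$ with the ``small'' increments $\delta x_{t_{k}t_{k+1}}$ that must all be bounded by the convergent series $\sum_{m}|\rho(m)|^{d}$; second, one must correctly isolate the resonant chaos-zero term at $i=d$ and verify, using the cancellation $\sum_{k}\rho(k)=0$ for $H<1/2$, that it is this term, and not the centered fluctuations, that sets the rate $n^{1-dH}$.
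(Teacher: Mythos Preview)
Your proposal is correct and follows the same overall strategy as the paper: verify the $L_{2}$-estimates \eqref{e.xih} for $\cj_{s}^{t}(x^{i},h^{n})$ with the choices $\be_{i}=iH+1/2$ (case (i)) and $\be_{i}=1+(i-d)H$ (case (ii)), check \eqref{e.bei}, and then apply Proposition~\ref{prop.yhbd} to the rescaled increments $n^{-1/2}h^{n}$ and $n^{-(1-dH)}h^{n}$ respectively. Your values of $\be_{i}$, $\theta$, and the verification of $\be>1$ match the paper exactly.

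The difference lies in how the estimates \eqref{e.xih} are obtained. The paper does not carry out the second-moment/chaos computation you outline; instead it imports the bounds directly from \cite{LT20}. For (i) it quotes \cite[equation~(4.24)]{LT20} to get $|\cj_{s}^{t}(x^{i},h^{n})|_{L_{2}}\lesssim n^{1/2}(t-s)^{iH+1/2}$ whenever the Hermite rank exceeds $1/(2H)$. For (ii) the paper splits $f=f_{1}+f_{2}$ with $f_{1}=\sum_{q>[1/(2H)]}a_{q}H_{q}$ (high rank, so the same citation applies) and $f_{2}=\sum_{q=d}^{[1/(2H)]}a_{q}H_{q}$ (finite chaos, handled term-by-term via \cite[Lemma~4.11(ii)]{LT20}), and then uses the enlargement $(n(t-s))^{1/2}\leq (n(t-s))^{1-dH}$ to merge the two pieces into the common exponent $\be_{i}=1+(i-d)H$. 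Your organization around the dichotomy $i<d$ versus $i=d$ is equivalent in outcome; the paper's high/low-chaos split is dictated by the form in which the cited estimates are stated in \cite{LT20} (infinite chaos allowed only when the rank is large, single Hermite polynomials otherwise).

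What you call the ``main obstacle''---the mixed-contraction second-moment bound for an $f$ with infinite chaos---is thus exactly the work the paper outsources to \cite{LT20}. Your sketch of the mechanism (centeredness for $i<d$, the resonant mean $a_{d}\,d!\,\rho^{d}$ at $i=d$) is correct, but a self-contained proof would require reproducing those lemmas.
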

\begin{proof}
   We assume that $d>\frac{1}{2H}$. In the following we prove (i) by applying Proposition \ref{prop.yhbd}. We first recall the estimate  in \cite[equation (4.24)]{LT20}: 
 \begin{eqnarray}\label{e.xihn}
| \cj_{s}^{t} (x^{i}, h^{n}) |_{L_{2}} \leq Kn^{1/2} (t-s)^{iH+1/2} , 
\qquad
\text{ $(s,t)\in \cs_{2}([ 0,1 ]): t-s\geq 1/n $, }
\end{eqnarray} 
for all $i=0,\dots,[\frac{1}{2H}]$.  The estimate \eqref{e.xihn} implies that  
 relation  \eqref{e.xih} holds for  $h := h^{n}/\sqrt{n}$ and $\be_{i}:=iH+1/2$.  
Take $\al=H-\ep$ and recall that $\ell$ is the least integer such that $\ell H +1/2>1$, or  
  $\ell H>1/2$.  It is thus readily checked that  
  condition \eqref{e.bei} is satisfied. We conclude that \eqref{e.yhbdi} holds.   

 We turn to the case when  $d\leq \frac{1}{2H}$. As before, our estimate will be an application of  Proposition \ref{prop.yhbd}.  We first derive an estimate of $| \cj_{s}^{t} (x^{i}, h^{n}) |_{L_{2}} $. For this purpose we   consider the following  decomposition
 \begin{eqnarray}\label{e.hd}
h^{n}_{st} = h^{n,(1)}_{st}+h^{n,(2)}_{st},
\end{eqnarray}
where
\begin{eqnarray*}
  h^{n,(1)}_{st} =  \sum_{s\leq t_{k}<t} f_{1}(n^{H} \delta x_{t_{k}t_{k+1}}) , 
\qquad
 h^{n,(2)}_{st} = \sum_{s\leq t_{k}<t} f_{2}(n^{H} \delta x_{t_{k}t_{k+1}})  
  \end{eqnarray*}
and 
\begin{eqnarray*}
f_{1}(x) =  \sum_{q=[\frac{1}{2H}]+1}^{\infty} a_{q}H_{q} (x) , 
\qquad
\qquad
f_{2}(x) =  \sum_{q=d}^{[\frac{1}{2H}]}  a_{q}H_{q}(x)  . 
\end{eqnarray*}
Note that the Hermite rank of $f_{1}$ is greater than $\frac{1}{2H}$. So  we can  apply   \eqref{e.xihn}     to get   the estimate 
\begin{eqnarray}\label{e.xh1}
| \cj_{s}^{t} (x^{i}, h^{n,(1)}) |_{L_{2}} \leq Kn^{1/2} (t-s)^{iH+1/2}   
\end{eqnarray}
for  $(s,t)\in \cs_{2}([ 0,1 ]): t-s\geq 1/n $ and $i=0,\dots, d$. 
By assumption  we have $1/2-dH>0$. It follows that  
\begin{eqnarray}\label{e.nst}
1\leq  n^{1/2-dH}(t-s)^{1/2-dH},  
\end{eqnarray}
and therefore we can enlarge the bound in \eqref{e.xh1} to be:    
\begin{eqnarray}\label{e.xih1}
| \cj_{s}^{t} (x^{i}, h^{n,(1)}) |_{L_{2}}  \leq Kn^{1-dH} (t-s)^{ 1+iH-dH }  . 
\end{eqnarray}
 
Let us turn to the estimate of $| \cj_{s}^{t} (x^{i}, h^{n,(2)}) |_{L_{2}} $. We first have the bound
\begin{eqnarray}\label{e.xih2}
| \cj_{s}^{t} (x^{i}, h^{n,2}) |_{L_{2}} \leq \sum_{q=d}^{[\frac{1}{2H}]}  |a_{q} | \cdot | \cj_{s}^{t} (x^{i}, h^{n,q}) |_{L_{2}} 
, \qquad 
(s,t)\in \cs_{2}([ 0,1 ])  . 
\end{eqnarray}
 Recall the estimate in  \cite[Lemma 4.11 (ii)]{LT20}:
\begin{eqnarray}\label{e.xihq}
|\cj_{s}^{t}(x^{i}, h^{n,q})|_{L_{2}} \lesssim 
\begin{cases}
n^{1-qH } (t-s)^{1+iH-qH} & \text{when } q\leq i
\\
n^{1/2}(t-s)^{iH+1/2}&\text{when }  q>i
\end{cases}
\end{eqnarray}
for $(s,t)\in\cs_{2}([0,1]):t-s\geq1/n$, $i=0,\dots, d$ and  $q<\frac{1}{2H}$. 
Substituting  \eqref{e.xihq}  
into the right-hand side of \eqref{e.xih2} and then 
applying    the relation $1\leq n(t-s) $   we    obtain that  
\begin{eqnarray}\label{e.xh2}
| \cj_{s}^{t} (x^{i}, h^{n,2}) |_{L_{2}}   \leq Kn^{1-dH} (t-s)^{ 1+iH-dH } , \qquad 
(s,t)\in \cs_{2}([ 0,1 ])   
\end{eqnarray}
for $i=0,\dots, d $.

Combining  the two  estimates \eqref{e.xih1} and \eqref{e.xh2} and taking into account  the decomposition \eqref{e.hd},  we obtain that \eqref{e.xih} holds for $\be_{i} := 1+iH-dH $ and  $h := h^{n}/n^{1-dH}$. It is readily checked that condition    \eqref{e.bei} is satisfied  for $\ell =d+1$. Applying Proposition \ref{prop.yhbd}
we thus conclude the desired estimate \eqref{e.yhbdii}.  
\end{proof}
\subsection{Convergence of Riemann sum}

Let $y$ be a  continuous process controlled by the fBm~$x$. This subsection is devoted to the convergence of   Riemann sum for  the regular integral   $\int_{0}^{t}y_{u}du$. For convenience we will consider   the uniform partition of $[0,1]$: $t_{i}=i/n$, $ i=0,1,\dots, n.$ 

We   start by proving  the following    weighted     limit theorem result:  
\begin{lemma}\label{lem.yxdt}
Let $x$ be a one-dimensional fBm with Hurst parameter $H < 1/2$.  Let $(y,y')$ be a   rough path controlled by $(x, 2, H)$ almost surely. 
Define the increment 
\begin{eqnarray}\label{e.hn}
h^{n}_{st} =  \sum_{s\leq t_{k}<t}   \int_{t_{k}}^{t_{k+1}} x^{1}_{t_{k}u} du 
\qquad\text{for } (s,t)\in\cs_{2} (\ll0,1\rr). 
\end{eqnarray}
 Then for each   $(s,t)\in\cs_{2} ([0,1])$
   we have the    convergence in probability:  
\begin{eqnarray}\label{e.yhc}
n^{2H}\cj_{s}^{t}(y, h^{n}) \to  -\frac{1}{4H+2} \int_{s}^{t}y'_{u}du
\qquad \text{as $n\to\infty$} .
\end{eqnarray} 
\end{lemma}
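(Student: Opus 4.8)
The plan is to reduce the convergence \eqref{e.yhc} to a first-order computation via the controlled structure of $(y,y')$, and then to assemble the limit through a discrete sewing and Riemann-sum argument. Write $a_k:=h^n_{t_kt_{k+1}}=\int_{t_k}^{t_{k+1}}x^1_{t_ku}\,du$, so that by \eqref{e.jfg2} and the additivity of $h^n$ over the grid one has $\cj_s^t(y,h^n)=\sum_{s\le t_k<t}y_{t_k}a_k$ and $\cj_s^t(1,h^n)=h^n_{st}$. Since $(y,y')$ is controlled by $(x,2,H)$, the identity \eqref{e.yh} with $\ell=2$ gives, on each grid-aligned subinterval $[a,b]\subseteq[s,t]$,
\[
\cj_a^b(y,h^n)=y_a\,h^n_{ab}+y'_a\,\cj_a^b(x^1,h^n)+\cj_a^b(r^{(0)},h^n).
\]
The three steps are then: (i) identify the limit of the first-order block $n^{2H}\cj_a^b(x^1,h^n)$; (ii) show that the boundary term $y_a h^n_{ab}$ and the remainder $\cj_a^b(r^{(0)},h^n)$ are negligible after multiplying by $n^{2H}$; and (iii) sum the local expansions over a grid-aligned sub-partition $\{s_i\}$ of $[s,t]$ so that the first-order pieces build up the integral $-\tfrac1{4H+2}\int_s^t y'_u\,du$.

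For step (i) I would establish the deterministic limit
\[
n^{2H}\cj_a^b(x^1,h^n)\longrightarrow-\frac{b-a}{4H+2}\qquad\text{in }L^2,
\]
which is exactly \eqref{e.yhc} in the special case $y=x$ (so $y'\equiv1$ and $r^{(0)}\equiv0$, and $\cj_a^b(x^1,h^n)$ agrees with $\sum_k x_{t_k}a_k$ up to the negligible term $x_a h^n_{ab}$). The mean is a direct Gaussian computation: using $\mE[x_{t_k}a_k]=\int_{t_k}^{t_{k+1}}\mE[x_{t_k}(x_u-x_{t_k})]\,du$ and the fBm covariance, the resulting sum telescopes, and after discarding Riemann-sum errors of order $n^{-1}=o(n^{-2H})$ (valid since $H<1/2$) one is left with $-\tfrac{b-a}{4H+2}n^{-2H}+o(n^{-2H})$. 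Equivalently, expanding $y_{t_k}=\sum_{j<k}y'_{t_j}\,\delta x_{t_jt_{j+1}}+\cdots$ displays the constant as $n^{2H+1}\sum_{m\ge1}\mE[\delta x_{t_jt_{j+1}}\,a_{j+m}]$, whose value $-\tfrac1{4H+2}$ follows by telescoping combined with the identity $\sum_{k\in\ZZ}\rho(k)=0$ valid for $H<1/2$ (see \eqref{e.rho}). For the fluctuation I would show $\mathrm{Var}\bigl(\cj_a^b(x^1,h^n)\bigr)=o(n^{-4H})$; by Wick's formula this variance is a sum of products of two fBm covariances, and its decay hinges on the summability of $\rho$, again available for $H<1/2$.

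For step (ii), the centered increment $h^n_{ab}$ satisfies $|h^n_{ab}|_{L_2}\lesssim n^{-H-1/2}(b-a)^{1/2}$, so $n^{2H}y_a h^n_{ab}=O(n^{H-1/2})\to0$. The remainder is controlled by Proposition~\ref{prop.yhbd} applied with $h:=h^n$, $\al=H-\ep$, and exponents $\be_i$ read off from the moment bounds $|\cj_a^b(x^0,h^n)|_{L_2}\lesssim n^{-2H}(b-a)^{1/2}$ and $|\cj_a^b(x^1,h^n)|_{L_2}\lesssim n^{-2H}(b-a)$, i.e.\ $\be_0=\tfrac12$ and $\be_1=1$; with $\ell=2$ this makes $\be=\min\{2(H-\ep)+\tfrac12,\;(H-\ep)+1\}=2H+\tfrac12-2\ep$ in \eqref{e.bei}, so that $|\cj_a^b(r^{(0)},h^n)|_{L_1}\lesssim n^{-2H}(b-a)^{\be}$. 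For step (iii) I would sum the local expansion over the cells of $\{s_i\}$, let $n\to\infty$ first (the first-order limit and Slutsky give $n^{2H}y'_{s_i}\cj_{s_i}^{s_{i+1}}(x^1,h^n)\to-\tfrac{s_{i+1}-s_i}{4H+2}y'_{s_i}$, while the boundary and remainder contributions are bounded by $\sum_i(s_{i+1}-s_i)^{\be}$), and then send the mesh to $0$; by continuity of $y'$ the surviving Riemann sum converges to $-\tfrac1{4H+2}\int_s^t y'_u\,du$, and $\sum_i(s_{i+1}-s_i)^{\be}\to0$ provided $\be>1$.

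The main obstacle is precisely the sewing threshold $\be>1$ in steps (ii)--(iii) together with the fourth-moment bound in step (i). The crude Cauchy--Schwarz estimate yields $\be=2H+\tfrac12$, which exceeds $1$ only when $H>\tfrac14$; for $H\le\tfrac14$ one must improve the sewing input by exploiting that $\cj_a^b(1,h^n)=h^n_{ab}$ is a \emph{centered} functional of $x$ rather than bounding it through $|h^n_{ab}|_{L_2}$ alone, so as to recover a total H\"older exponent above $1$. Establishing the sharp moment estimates on the family $\{h^n_{st}\}$ that drive both the variance decay $\mathrm{Var}(\cj_a^b(x^1,h^n))=o(n^{-4H})$ and this improved remainder bound, and reconciling the almost-sure control of $(y,y')$ with the $L_2$-estimates underlying Proposition~\ref{prop.yhbd}, is the technically demanding heart of the argument, and it is here that the restriction $H<\tfrac12$ (via the summability and vanishing total sum of $\rho$) is genuinely used.
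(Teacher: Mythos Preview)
Your overall architecture is the paper's: localize so that $(y,y')$ is controlled in $L_2$, expand $\cj_a^b(y,h^n)$ via \eqref{e.yh}, prove the $L^2$-convergence $n^{2H}\cj_a^b(x^1,h^n)\to-\tfrac{b-a}{4H+2}$ by separately computing the mean and the second moment, control $\cj_a^b(r^{(0)},h^n)$ with Proposition~\ref{prop.yhbd}, and then patch over a sub-partition $\{s_j\}$ of $[s,t]$.

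The gap is that the ``main obstacle'' you isolate for $H\le\tfrac14$ is illusory and comes from an under-use of the hypothesis \eqref{e.xih}, which is only required on $\{t-s\ge1/n\}$. From \eqref{e.hnbd} one has $|h^n_{st}|_{L_2}\lesssim n^{-H-1/2}(t-s)^{1/2}$ (your exponent $n^{-2H}$ already throws information away). After the normalization $h:=n^{2H}h^n$ this leaves a spare factor $n^{H-1/2}=(1/n)^{1/2-H}\le(t-s)^{1/2-H}$, so the correct choice is $\be_0=1-H$, not $\be_0=\tfrac12$. With $\be_1=1$ and $\al=H-\ep$ one gets
\[
\be=\min\{2(H-\ep)+1-H,\;(H-\ep)+1\}=1+H-2\ep>1
\]
for every $H\in(0,\tfrac12)$. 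Hence $|\cj_a^b(r^{(0)},h^n)|_{L_1}\lesssim n^{-2H}(b-a)^{1+H-2\ep}$ and $\sum_j(s_{j+1}-s_j)^{\be}\to0$ hold uniformly in $H$; no further ``centeredness'' input is needed. The reconciliation of almost-sure control with the $L_2$ hypothesis of Proposition~\ref{prop.yhbd} is handled by a standard localization argument (truncating $G_{\bfy}$) at the outset; you should state this explicitly rather than leave it as a loose end.

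One further remark: the variance estimate in step~(i) is a bit more delicate than ``Wick plus summability of $\rho$'' suggests. The paper expands $|\cj_s^t(x^1,h^n)|_{L_2}^2$ by Gaussian integration by parts into three pieces $A_1+A_{21}+A_{22}$; the first two are $O(n^{-2H-1}(t-s)^{2H+1})$ and vanish after multiplication by $n^{4H}$, while $A_{22}$ alone carries the limit $\tfrac{(t-s)^2}{(4H+2)^2}$, matching the square of the first moment and yielding the claimed $L^2$-convergence.
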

\begin{proof}
The proof is divided into several steps. By localization (cf. \cite[Lemma 3.4.5]{JP}) we can and will assume that $(y,y')$ is controlled by $(x,2,H-\ep)$ in $L_{2}$ for any $\ep>0$. 

\noindent\emph{Step 1: Estimate  of $h^{n}$.} 
By the self-similarity of the fBm we have   $\mE [x^{1}_{t_{k}u}x^{1}_{t_{k'}u'}] = n^{-2H}
\mE [x^{1}_{{k}, nu}x^{1}_{{k'},nu'}] 
  $. 
Applying  this relation   and then the change of variable $nu'\to u'$ and $nu\to u$  we get 
\begin{eqnarray*}
\mE[|h^{n}_{st}|^{2} ] 
&=& n^{-2H-2} \sum_{ns\leq k,k'<nt}   \int_{k}^{k+1} \int_{ {k'}}^{ {k'+1}} 
\mE [x^{1}_{ {k}u}x^{1}_{{k'}u'}] 
du' du, 
\qquad  (s,t)\in\cs_{2} (\ll0,1\rr).
\end{eqnarray*}
 Applying the estimate  $|\mE [x^{1}_{ {k}u}x^{1}_{{k'}u'}] |\lesssim |k-k'|^{2H-2}$ for $k\neq k'$ we obtain 
 \begin{eqnarray}\label{e.hnbd}
\mE[|h^{n}_{st}|^{2} ]  \lesssim n^{-2H-2} \sum_{\substack{ns\leq k,k'<nt\\k\neq k'}}
  |k-k'|^{2H-2} 
\lesssim n^{-2H-1} (t-s) , 
\qquad  (s,t)\in\cs_{2} (\ll0,1\rr). 
\end{eqnarray}
 
\noindent\emph{Step 2: A decomposition    of $|\cj_{s}^{t}(x^{1}, h^{n}) |_{L_{2}}^{2}$.} 
Let  $   (s,t)\in\cs_{2} ([0,1])$ such that $t-s>1/n$. 
By definition \eqref{e.jfg2} we can   express $|\cj_{s}^{t}(x^{1}, h^{n}) |_{L_{2}}^{2}$ as 
\begin{eqnarray}\label{e.jxih}
|\cj_{s}^{t}(x^{1}, h^{n}) |_{L_{2}}^{2}
&=&  \sum_{s\leq t_{k},t_{k'}<t}   \mE  
\int_{t_{k}}^{t_{k+1}}\int_{t_{k'}}^{t_{k'+1}} x^{1}_{st_{k}}x^{1}_{st_{k'}} x^{1}_{t_{k}u} x^{1}_{t_{k'}u'}du' du . 
\end{eqnarray}
Applying the integration by part  to the integrand in \eqref{e.jxih} we obtain   
\begin{eqnarray}\label{e.xha}
\mE \lp
x^{1}_{st_{k}}x^{1}_{st_{k'}} x^{1}_{t_{k}u} x^{1}_{t_{k'}u'}  
\rp = A_{1}+A_{2},
\end{eqnarray}
where
\begin{eqnarray}
A_{1} &=&   \mE \lp
x^{1}_{st_{k}}x^{1}_{st_{k'}} \rp
 \langle \mathbf{1}_{[t_{k},u]}, \mathbf{1}_{[t_{k'},u']} \rangle_{\ch}
 \label{e.a1i}
 \\
 A_{2} &=&    \langle D^{2}(x^{1}_{st_{k}}x^{1}_{st_{k'}}), \mathbf{1}_{[t_{k},u]}\otimes  \mathbf{1}_{[t_{k'},u']}  \rangle_{\ch^{\otimes 2}}. 
 \label{e.a2}
\end{eqnarray}

\noindent\emph{Step 3: Estimate of $A_{1}$.}
It is clear that $\Big| \mE \lp
x^{1}_{st_{k}}x^{1}_{st_{k'}} \rp\Big|\leq (t-s)^{2H}$. On the other hand, similar to the estimate of $\mE[\delta x_{t_{k}u}\delta x_{t_{k'}u'}]$ in Step 1 
we have $|\langle \mathbf{1}_{[t_{k},u]}, \mathbf{1}_{[t_{k'},u']} \rangle_{\ch}| \lesssim n^{-2H} |k-k'|^{2H-2} $. Substituting these two estimates into \eqref{e.a1i} we obtain 
\begin{eqnarray*}
|A_{1}|\lesssim (t-s)^{2H} |k-k'|^{2H-2} n^{-2H} . 
\end{eqnarray*}
The above   estimate for $|A_{1}|$ together with  the relation \begin{eqnarray*}
\sum_{s\leq t_{k}, t_{k'}< t} |k-k'|^{2H-2}\lesssim n(t-s) 
\end{eqnarray*}
shows that 
 \begin{eqnarray}\label{e.a1bd}
  \sum_{s\leq t_{k},t_{k'}<t}      
\int_{t_{k}}^{t_{k+1}}\int_{t_{k'}}^{t_{k'+1}} A_{1} du' du 
  \lesssim (t-s)^{2H+1}  n^{-1-2H} 
. 
\end{eqnarray}

\noindent\emph{Step 4: Estimate of $A_{2}$.}
Recall that $A_{2}$ is defined in \eqref{e.a2}.
We first note that  
\begin{eqnarray}\label{e.a2d}
D^{2}(x^{1}_{st_{k}}x^{1}_{st_{k'}}) =   \mathbf{1}_{[s,t_{k}]}\otimes \mathbf{1}_{[s,t_{k'}]} +\mathbf{1}_{[s,t_{k'}]}\otimes \mathbf{1}_{[s,t_{k}]} . 
\end{eqnarray}
Substituting relation \eqref{e.a2d} into \eqref{e.a2}   we obtain the decomposition 
\begin{eqnarray}\label{e.a2de}
 \sum_{s\leq t_{k},t_{k'}<t}      
\int_{t_{k}}^{t_{k+1}}\int_{t_{k'}}^{t_{k'+1}} A_{2} du' du 
  = A_{21}+A_{22}\,, 
\end{eqnarray}
where
 \begin{eqnarray}
A_{21}&=&\sum_{s\leq t_{k},t_{k'}<t}      
\int_{t_{k}}^{t_{k+1}}\int_{t_{k'}}^{t_{k'+1}}  
  \langle \mathbf{1}_{[s,t_{k'}]} \otimes  \mathbf{1}_{[s,t_{k}]}  , \mathbf{1}_{[t_{k},u]}\otimes  \mathbf{1}_{[t_{k'},u']}  \rangle_{\ch^{\otimes 2}}
 du' du 
 \nonumber
 \\
A_{22}&=& \sum_{s\leq t_{k},t_{k'}<t}      
\int_{t_{k}}^{t_{k+1}}\int_{t_{k'}}^{t_{k'+1}}  
  \langle \mathbf{1}_{[s,t_{k}]} \otimes  \mathbf{1}_{[s,t_{k'}]}  , \mathbf{1}_{[t_{k},u]}\otimes  \mathbf{1}_{[t_{k'},u']}  \rangle_{\ch^{\otimes 2}}
 du' du . 
 \label{e.a22}
\end{eqnarray}
In the following we  bound $A_{21}$ and $A_{22}$, which together will give us the estimate of $A_{2}$.  We   first have 
\begin{eqnarray*}
|A_{21}| \leq 2\sum_{s\leq t_{k}\leq t_{k'}<t}   
\int_{t_{k}}^{t_{k+1}}\int_{t_{k'}}^{t_{k'+1}}  
 | \langle \mathbf{1}_{[s,t_{k'}]}  , \mathbf{1}_{[t_{k},u]}   \rangle_{\ch }|\cdot 
  | \langle   \mathbf{1}_{[s,t_{k}]}  ,  \mathbf{1}_{[t_{k'},u']}  \rangle_{\ch}|
 du' du .
\end{eqnarray*}
Invoking   the  elementary  estimates  
\begin{eqnarray*}
 | \langle \mathbf{1}_{[s,t_{k'}]}  , \mathbf{1}_{[t_{k},u]}   \rangle_{\ch }|
\leq n^{-2H} 
  \qquad\text{and}  \qquad
  | \langle   \mathbf{1}_{[s,t_{k}]}  ,  \mathbf{1}_{[t_{k'},u']}  \rangle_{\ch}|\lesssim n^{-2H}|k-k'|^{2H-1}
\end{eqnarray*}
for $t_{k}\leq t_{k'}$ we obtain 
  \begin{eqnarray}
|A_{21}| \lesssim \sum_{s\leq t_{k}\leq t_{k'}<t}   
\int_{t_{k}}^{t_{k+1}}\int_{t_{k'}}^{t_{k'+1}}  
 n^{-2H} \cdot 
  n^{-2H}\cdot   (k'-k)^{2H-1}
 du' du 
 \nonumber
 \\
 \lesssim  (t-s)^{2H+1} n^{-2H-1}. 
 \label{e.a21bd}
\end{eqnarray}
We turn to the estimate of  $A_{22}$.   A change of variables in \eqref{e.a22} gives   
\begin{eqnarray}\label{e.a22i}
A_{22}&=&n^{-4H-2}\sum_{ns\leq {k}, {k'}<nt}      
A_{22,kk'}
 , 
\end{eqnarray}
where
\begin{eqnarray}\label{e.a22k}
A_{22,kk'} = \int_{ {k}}^{ {k+1}}\int_{ {k'}}^{ {k'+1}}  
  \langle \mathbf{1}_{[ns, {k}]} \otimes  \mathbf{1}_{[ns, {k'}]}  , \mathbf{1}_{[ {k},u]}\otimes  \mathbf{1}_{[ {k'},u']}  \rangle_{\ch^{\otimes 2}}
 du' du . 
\end{eqnarray}
It is clear that $|A_{22,kk'}|\sim O(1)$. Therefore, from \eqref{e.a22i} we obtain  the estimate 
\begin{eqnarray}\label{e.a22bd}
|A_{22}|\lesssim (t-s)^{2}n^{-4H}. 
\end{eqnarray}

\noindent\emph{Step 5: Estimate  of   $\cj_{s}^{t}(x^{1},h^{n})$.}
Putting together the estimates  \eqref{e.a1bd}, \eqref{e.a21bd}, and \eqref{e.a22bd} and taking into account the decompositions \eqref{e.jxih}-\eqref{e.xha} and \eqref{e.a2de} we obtain the estimate
\begin{eqnarray}\label{e.xhl2}
|\cj_{s}^{t}(x^{1}, h^{n}) |_{L_{2}} \lesssim (t-s)n^{-2H} 
\end{eqnarray}
for $   (s,t)\in\cs_{2} ([0,1])$ such that $t-s>1/n$. 

\noindent\emph{Step 6: Convergence  of the second moment  of   $\cj_{s}^{t}(x^{1},h^{n})$.} Let $   (s,t)\in\cs_{2} ([0,1])$.  In this  step we   show the  convergence: 
\begin{eqnarray}\label{e.x1hl2}
n^{2H}| \cj_{s}^{t}(x^{1},h^{n})|_{L_{2}} \to \frac{1}{4H+2}(t-s) 
\qquad\text{as $n\to\infty$.}
\end{eqnarray}

Recall  our decomposition of $|\cj_{s}^{t}(x^{1},h^{n})|_{L_{2}}^{2}$ in \eqref{e.jxih}-\eqref{e.xha} and of $A_{2}$ in \eqref{e.a2de}. So   the estimates in \eqref{e.a1bd}, \eqref{e.a21bd}   and \eqref{e.a22bd} together shows that  the convergence of $|\cj_{s}^{t}(x^{1},h^{n})|_{L_{2}}^{2}$ is dominated by that of $A_{22}$. Namely, we have
\begin{eqnarray}\label{e.x1ha11}
\lim_{n\to\infty}n^{4H}| \cj_{s}^{t}(x^{1},h^{n})|_{L_{2}}^{2}   = \lim_{n\to\infty}n^{4H} A_{22}. 
\end{eqnarray}
In the following we focus on the computation of         $\lim_{n\to\infty}n^{4H}A_{22}$. 

Recall the expression of $A_{22}$ in  \eqref{e.a22i}-\eqref{e.a22k}. 
We first note that since $|A_{22,kk'}|\sim O(1)$ we can replace the summation $\sum_{ns\leq {k}, {k'}<nt}     $ in \eqref{e.a22i} by $\sum_{ns+n^{\ep}\leq {k}, {k'}<nt }     $ for   $0<\ep<1$ without changing the limit of $A_{22}$. Next, by stationary increment and  self-similarity of the fBm we have
\begin{eqnarray*}
\langle \mathbf{1}_{[ns, {k}]} \otimes  \mathbf{1}_{[ns, {k'}]}  , \mathbf{1}_{[ {k},u]}\otimes  \mathbf{1}_{[ {k'},u']}  \rangle_{\ch^{\otimes 2}} = 
\langle \mathbf{1}_{[ns-k, 0]} \otimes  \mathbf{1}_{[ns-k',0]}  , \mathbf{1}_{[ 0,u-k]}\otimes  \mathbf{1}_{[ 0,u'-k']}  \rangle_{\ch^{\otimes 2}}
\\
=\langle \mathbf{1}_{[\frac{ns-k}{u-k}, 0]} \otimes  \mathbf{1}_{[\frac{ns-k'}{u'-k'},0]}  , \mathbf{1}_{[ 0,1]}\otimes  \mathbf{1}_{[ 0,1]}  \rangle_{\ch^{\otimes 2}}(u-k)^{2H}(u'-k')^{2H}
\\
=\langle \mathbf{1}_{(-\infty , 0]} \otimes  \mathbf{1}_{(-\infty,0]}  , \mathbf{1}_{[ 0,1]}\otimes  \mathbf{1}_{[ 0,1]}  \rangle_{\ch^{\otimes 2}}(u-k)^{2H}(u'-k')^{2H}+o(1),
\end{eqnarray*}
where the last equation holds for $k$ and $k'$ such that $k-ns \geq  n^{\ep}$ and $ k'-ns\geq  n^{\ep}$. Using the relation $\langle \mathbf{1}_{(-\infty, 0]},  \mathbf{1}_{[  0, 1]}  \rangle_{\ch} = -1/2$ we obtain     
\begin{eqnarray}\label{e.a22ii}
\langle \mathbf{1}_{[ns, {k}]} \otimes  \mathbf{1}_{[ns, {k'}]}  , \mathbf{1}_{[ {k},u]}\otimes  \mathbf{1}_{[ {k'},u']}  \rangle_{\ch^{\otimes 2}} =  \frac14 (u-k)^{2H}(u'-k')^{2H}+o(1).
\end{eqnarray}
Substituting \eqref{e.a22ii} into \eqref{e.a22i} we obtain 
\begin{eqnarray*}
A_{22}
  &=&
  n^{-4H-2}\frac14 \sum_{ns+n^{\ep}\leq {k}, {k'}<nt}      
\int_{ {k}}^{ {k+1}}\int_{ {k'}}^{ {k'+1}}  
   (u-k)^{2H}   (u'-k')^{2H}
 du' du+ n^{-4H}o(1)
 \\
 &=&n^{-4H}(t-s)^{2} \cdot  \frac14  (2H+1)^{-2}+ n^{-4H}o(1). 
\end{eqnarray*}
It follows that 
\begin{eqnarray*}
\lim_{n\to\infty}  n^{4H}A_{22} =  (t-s)^{2} \cdot  \frac14  (2H+1)^{-2} .
\end{eqnarray*}
Recalling relation  \eqref{e.x1ha11}, we thus obtain  the   convergence in \eqref{e.x1hl2}. 

\noindent\emph{Step 7: Convergence    of   $\cj_{s}^{t}(x^{1},h^{n})$.}
In this step, we show the $L_{2}$-convergence of $\cj_{s}^{t}(x^{1},h^{n})$: 
\begin{eqnarray}\label{e.x1hconv}
n^{2H}\cj_{s}^{t}(x^{1},h^{n}) \to -\frac{1}{4H+2}(t-s) .
\end{eqnarray}
In view of the convergence \eqref{e.x1hl2},   it suffices  to  show that:  
\begin{eqnarray}\label{e.xh1m}
n^{2H} \mE  \cj_{s}^{t}(x^{1},h^{n})  \to -\frac{1}{4H+2}(t-s)
\qquad\text{as $n\to\infty$. }
\end{eqnarray}
The convergence \eqref{e.xh1m} can be proved  in  the similar way as  in Step 5. Indeed,  we have:  
\begin{eqnarray*}
\mE [\cj_{s}^{t}(x^{1}, h^{n})]&=&  \mE\sum_{s\leq t_{k}<t}  x^{1}_{st_{k}} \int_{t_{k}}^{t_{k+1}} x^{1}_{t_{k}u} du
\\
&=&  \sum_{s\leq t_{k}<t}   \int_{t_{k}}^{t_{k+1}} \langle \mathbf{1}_{[s,t_{k}]}, \mathbf{1}_{[t_{k}, u]} \rangle_{\ch} du
\\
&=& n^{-2H-1} \sum_{ns\leq {k}<nt}   \int_{{k}}^{{k+1}} \langle \mathbf{1}_{[ns,{k}]}, \mathbf{1}_{[ {k}, u]} \rangle_{\ch} du
\\
&=&  n^{-2H-1} (-1/2)\cdot \sum_{ns\leq {k}<nt}   \int_{{k}}^{{k+1}}  
(u-k)^{2H}  du +n^{-2H}o(1)
\\
&=&n^{-2H} (t-s)  (-1/2)(2H+1)^{-1}+n^{-2H}o(1). 
\end{eqnarray*}
The    convergence   \eqref{e.xh1m} then follows. The two convergences   
\eqref{e.x1hl2} and \eqref{e.xh1m} together implies that  the     convergence  \eqref{e.x1hconv} holds. 

\noindent\emph{Step 8: Convergence  of $\cj_{s}^{t}(y,h^{n})$.} Let $   (s,t)\in\cs_{2} ([0,1])$.  
We start by   taking a   partition of $[s,t]$: $s=s_{0}<s_{1}<\cdots<s_{m}=t$    such that $\max_{j=0,\dots,m-1}|s_{j+1}-s_{j}|\leq 1/m$ for some $m<n$. Then we can write
\begin{eqnarray}\label{e.cjsj}
\cj_{s}^{t}(y, h^{n}) = \sum_{j=0}^{m-1} \cj_{s_{j}}^{s_{j+1}}(y, h^{n}) . 
\end{eqnarray}
Since $(y,y')$ is controlled by $(x,2,H-\ep)$ in $L_{2}$ we have the expansion $y_{t_{k}} = y_{s_{j}}+y_{s_{j}}'x^{1}_{s_{j}t_{k}} +r^{(0)}_{s_{j}t_{k}}$. Substituting this into $\cj_{s_{j}}^{s_{j+1}}(y, h^{n}) $ in \eqref{e.cjsj} we obtain 
\begin{eqnarray}\label{e.yhde}
\cj_{s}^{t}(y, h^{n}) = \sum_{j=0}^{m-1} y_{s_{j}}\cj_{s_{j}}^{s_{j+1}}(1, h^{n})  + \sum_{j=0}^{m-1} y_{s_{j}}'\cj_{s_{j}}^{s_{j+1}}(x^{1}, h^{n}) + \sum_{j=0}^{m-1}  \cj_{s_{j}}^{s_{j+1}}(r^{(0)} , h^{n})  . 
\end{eqnarray}
In the following  we consider the convergence of the three terms on the right-hand side of \eqref{e.yhde}. 

We   note that it follows from   relations \eqref{e.hnbd}  and \eqref{e.xhl2}    that conditions  \eqref{e.bei}-\eqref{e.xih} hold for  $h:=n^{2H}h^{n}$, $\al=H-\ep$, $\be_{0}:=1-H$ and $\be_{1}:=1$. Therefore, applying Proposition \ref{prop.yhbd} we have 
\begin{eqnarray*}
n^{2H}|\cj_{s}^{t}(r^{(0)}, h^{n})|_{L_{1}}\lesssim (t-s)^{1+H-\ep}. 
\end{eqnarray*}
This implies that
\begin{eqnarray}\label{e.rhc}
\lim_{m\to\infty} \limsup_{n\to\infty} n^{2H}\Big|\sum_{j=0}^{m-1}  \cj_{s_{j}}^{s_{j+1}}(r^{(0)}, h^{n})\Big|_{L_{1}} \lesssim  \lim_{m\to\infty} \sum_{j=0}^{m-1}  (s_{j+1}-s_{j})^{1+H-\ep} =0. 
\end{eqnarray}
We turn to the other two  terms in the right-hand side of \eqref{e.yhde}. Applying \eqref{e.hnbd} we have 
\begin{eqnarray}\label{e.hw}
 &&
 n^{2H}\Big|\sum_{j=0}^{m-1} y_{s_{j}}\cj_{s_{j}}^{s_{j+1}}(1, h^{n})\Big|_{L_{1}} =n^{2H}\sum_{j=0}^{m-1} \Big|y_{s_{j}}   h^{n}_{s_{j} s_{j+1}}\Big|_{L_{1}}
\notag
\\
 &&\qquad\lesssim \sum_{j=0}^{m-1} |y_{s_{j}} |_{L_{2}} (s_{j+1}-s_{j})^{1/2}(1/n)^{1/2-H} \to 0 
 \qquad
 \text{as $n\to\infty$. }
\end{eqnarray}
Finally, according to \eqref{e.x1hconv} we have the   convergence in probability: 
\begin{eqnarray}\label{e.x1hw}
\lim_{m\to\infty}
\lim_{n\to\infty}
n^{2H}\sum_{j=0}^{m-1} y_{s_{j}}'\cj_{s_{j}}^{s_{j+1}}(x^{1}, h^{n}) 
&=&
-\frac{1}{4H+2} \lim_{m\to\infty} \sum_{j=0}^{m-1} y_{s_{j}}'  (s_{j+1}-s_{j}) 
\notag
\\
&=& -\frac{1}{4H+2} \int_{0}^{t}y_{u}' du. 
\end{eqnarray}
Putting together the convergences \eqref{e.rhc}-\eqref{e.x1hw} and recalling the relation \eqref{e.yhde} we conclude the convergence \eqref{e.yhc}.  
\end{proof}

With Lemma \ref{lem.yxdt} in hand,  we are ready to consider the convergence  of the Riemann sum for the integral $\int_{0}^{t}y_{u}du$.   

  \begin{prop}\label{prop.riemann}
Let $x$ be a one-dimensional fBm with Hurst parameter $H < 1/2$.  Let $(y,y',y'')$ be a   rough path controlled by $(x, 3, H)$ almost surely.
  Then we have the convergence in probability: 
\begin{eqnarray}\label{e.riemann}
 n^{2H}
 \lp
\frac{1}{n}  \sum_{0\leq t_{k}<t} y_{t_{k}}  - \int_{0}^{t} y_{u}du\rp    \to   0 
\qquad \text{ as $n\to\infty$.}
\end{eqnarray}

%

\end{prop}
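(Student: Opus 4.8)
The plan is to reduce the Riemann-sum error to an object controlled by the weighted-sum machinery already in place, and then invoke Lemma \ref{lem.yxdt}. The key observation is that the difference between the Riemann sum and the integral can be written as a discrete integral of $y$ against a suitable increment. First I would decompose, on each subinterval $[t_{k}, t_{k+1}]$,
\begin{eqnarray*}
\frac{1}{n} y_{t_{k}} - \int_{t_{k}}^{t_{k+1}} y_{u} du = -\int_{t_{k}}^{t_{k+1}} (y_{u} - y_{t_{k}}) du.
\end{eqnarray*}
Summing over $0 \leq t_{k} < t$ gives
\begin{eqnarray*}
\frac{1}{n} \sum_{0\leq t_{k}<t} y_{t_{k}} - \int_{0}^{t} y_{u} du = -\sum_{0\leq t_{k}<t} \int_{t_{k}}^{t_{k+1}} (y_{u} - y_{t_{k}}) du.
\end{eqnarray*}
Since $(y,y',y'')$ is controlled by $(x,3,H)$, I would expand the increment $y_{u} - y_{t_{k}} = y'_{t_{k}} x^{1}_{t_{k}u} + r^{(0)}_{t_{k}u}$ inside the integral, which splits the error into a principal term built from $x^{1}$ and a remainder term built from $r^{(0)}$.

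The principal term is exactly $-\cj_{0}^{t}(y', h^{n})$ with $h^{n}$ the increment defined in \eqref{e.hn} of Lemma \ref{lem.yxdt} (noting $\int_{t_{k}}^{t_{k+1}} y'_{t_{k}} x^{1}_{t_{k}u}\,du = y'_{t_{k}} \int_{t_{k}}^{t_{k+1}} x^{1}_{t_{k}u}\,du$), so after multiplying by $n^{2H}$, Lemma \ref{lem.yxdt} applied to the controlled rough path $(y', y'')$ gives convergence in probability of $n^{2H}\cj_{0}^{t}(y', h^{n})$ to a finite limit of the form $-\frac{1}{4H+2}\int_{0}^{t} y''_{u}\,du$. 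In particular this quantity is $O(1)$, hence $n^{2H}$ times the principal term stays bounded; but the statement demands the whole expression tends to $0$, so I expect that the principal term must actually be shown to vanish after the $n^{2H}$ scaling, or else be absorbed. Here I would look more carefully: the correct reading is that $n^{2H}$ times the Riemann-sum error should converge to $0$, which forces the principal contribution to be of lower order than $n^{-2H}$. This suggests re-examining the scaling — the term $\cj_{0}^{t}(y', h^{n})$ itself is $O(n^{-2H})$ by Lemma \ref{lem.yxdt}, so $n^{2H}\cj_{0}^{t}(y',h^{n})$ is $O(1)$, not $o(1)$.

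The resolution, and the step I expect to be the main obstacle, is that one cannot naively apply the $n^{2H}$ rate to the principal term alone; instead the full error must be re-examined with the extra smoothness afforded by $\ell=3$. Concretely, I would exploit the fact that the integrand error $-\int_{t_k}^{t_{k+1}}(y_u-y_{t_k})\,du$ admits a further expansion using $y''$, and that the leading $x^1$-contribution, after integration against $du$ over the symmetric structure of the partition, produces cancellation beyond what $h^n$ alone reveals. The cleanest route is to write the error as a discrete integral against the increment $h^n$ from \eqref{e.hn}, apply Lemma \ref{lem.yxdt} to get the $n^{-2H}$ asymptotics of $\cj_0^t(y',h^n)$ explicitly, and then observe that the contribution from this term is itself $o(n^{-2H})$ because the limit identity in \eqref{e.yhc} combines with the a.s. control of $y''$ to yield a deterministic integral that is subtracted off — so I would check whether the intended statement has a typo in the power (perhaps $n^{4H}$ rather than $n^{2H}$, or the limit is a nonzero constant). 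Assuming the statement as written, I would argue that the remainder term $\sum_k \int_{t_k}^{t_{k+1}} r^{(0)}_{t_k u}\,du$ contributes at order $n^{-(3H\wedge 1)}$ via the upper-bound Proposition \ref{prop.yhbd} (which beats $n^{-2H}$ since $3H>2H$), while the principal term's $n^{-2H}$-scaled limit, being a genuine nonzero multiple of $\int_0^t y''_u\,du$, is the actual content — so the honest conclusion is convergence to that limit, and the $\to 0$ must come from an additional hypothesis or a sign cancellation I would need to locate by tracking constants through Lemma \ref{lem.yxdt}.
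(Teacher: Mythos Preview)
Your expansion is incomplete, and that omission is exactly the gap. When $(y,y',y'')$ is controlled by $(x,3,H)$, Definition~\ref{def.control} with $\ell=3$ gives
\[
\delta y_{t_{k}u} \;=\; y'_{t_{k}}\,x^{1}_{t_{k}u} \;+\; y''_{t_{k}}\,x^{2}_{t_{k}u} \;+\; r^{(0)}_{t_{k}u},
\]
not just $y'_{t_{k}}x^{1}_{t_{k}u}+r^{(0)}_{t_{k}u}$. (You later treat $r^{(0)}$ as order $n^{-3H}$, which is the $\ell=3$ remainder; so the $y''x^{2}$ term has simply been dropped.) The Riemann-sum error therefore splits into three pieces $I_{1}+I_{2}+I_{3}$, and the term you are missing,
\[
I_{2} \;=\; \sum_{0\le t_{k}<t} y''_{t_{k}}\int_{t_{k}}^{t_{k+1}} x^{2}_{t_{k}u}\,du,
\]
is precisely the source of the cancellation you were unable to locate.

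To see it, write $x^{2}_{t_{k}u}=\tfrac12(\delta x_{t_{k}u})^{2}$ as its mean $\tfrac12(u-t_{k})^{2H}$ plus a centered part. The centered part, summed and scaled by $n^{2H}$, vanishes by an $L_{2}$ estimate in the spirit of Proposition~\ref{prop.yhbd}. The mean part gives
\[
n^{2H}\cdot \frac12\sum_{0\le t_{k}<t} y''_{t_{k}}\int_{t_{k}}^{t_{k+1}}(u-t_{k})^{2H}\,du
\;=\; \frac{1}{2(2H+1)}\cdot\frac{1}{n}\sum_{0\le t_{k}<t} y''_{t_{k}}
\;\longrightarrow\; \frac{1}{4H+2}\int_{0}^{t} y''_{u}\,du,
\]
which is exactly the negative of the limit $-\tfrac{1}{4H+2}\int_{0}^{t} y''_{u}\,du$ that Lemma~\ref{lem.yxdt} produces for $n^{2H}I_{1}=n^{2H}\cj_{0}^{t}(y',h^{n})$. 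Together with $n^{2H}I_{3}\to 0$ (since $|r^{(0)}_{t_{k}u}|\lesssim n^{-3H+\ep}$), this yields $n^{2H}(I_{1}+I_{2}+I_{3})\to 0$. So the statement is correct as written: the ``sign cancellation'' you suspected is real, and it lives in the second-order term of the controlled expansion, not in Lemma~\ref{lem.yxdt} itself.
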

\begin{proof}
The proof is divided into several steps.

\noindent\emph{Step 1: A decomposition of the error of Riemann sum.} 
We  first have  
\begin{eqnarray}\label{e.ydu}
 \int_{0}^{t} y_{u}du -\frac{1}{n}  \sum_{0\leq t_{k}<t} y_{t_{k}}  &=& 
 \sum_{0\leq t_{k}<t}  \int_{t_{k}}^{t_{k+1}} \delta y_{t_{k}u} du
.
\end{eqnarray}
Substituting the expansion   $\delta y_{t_{k}u} = y_{t_{k}}'x^{1}_{t_{k}u}+ y_{t_{k}}''x^{2}_{t_{k}u}+r^{(0)}_{t_{k}u}$  into \eqref{e.ydu} we get the   expansion: 
\begin{eqnarray}\label{e.riemanni}
 \int_{0}^{t} y_{u}du-\frac{1}{n}  \sum_{0\leq t_{k}<t} y_{t_{k}}   &=&  I_{1}+I_{2}+I_{3},
\end{eqnarray}
where
\begin{eqnarray*}
I_{1} =  \sum_{0\leq t_{k}<t}  y_{t_{k}}' \int_{t_{k}}^{t_{k+1}} x^{1}_{t_{k}u} du , 
  \qquad
I_{2} =   \sum_{0\leq t_{k}<t} y_{t_{k}}''\int_{t_{k}}^{t_{k+1}} x^{2}_{t_{k}u} du, 
\qquad
I_{3} =  \sum_{0\leq t_{k}<t}\int_{t_{k}}^{t_{k+1}} r^{0}_{t_{k}u} du. 
\end{eqnarray*}
In the following we consider  the convergence of $I_{1}$, $I_{2}$ and $I_{3}$ which together will give the desired  convergence in \eqref{e.riemann}.

\noindent\emph{Step 2: Convergence of $I_{1}$ and $I_{3}$.} 
Since   $|r^{(0)}_{t_{k}u}|_{L_{1}}\lesssim n^{-3H}$ it  follows that 
 \begin{eqnarray}\label{e.i3c}
n^{2H}I_{3}\to 0 
\end{eqnarray}
in probability as $n\to\infty$. On the other hand, a direct application of  Lemma \ref{lem.yxdt} yields  the convergence
 \begin{eqnarray}\label{e.i1c}
n^{2H} I_{1} \to   -\frac{1}{4H+2}  \int_{s}^{t}y''_{u}du
\qquad \text{as $n\to\infty$}.
\end{eqnarray}


\noindent\emph{Step 3: Convergence of $I_{2}$.} 
 We consider the following decomposition of $I_{2}$:
 \begin{eqnarray*}
I_{2} &=&   \sum_{0\leq t_{k}<t} y_{t_{k}}''\int_{t_{k}}^{t_{k+1}} x^{2}_{t_{k}u} du = I_{21}
+I_{22},
\end{eqnarray*}
where
\begin{eqnarray}
I_{21}
&=& \sum_{0\leq t_{k}<t} y_{t_{k}}''\int_{t_{k}}^{t_{k+1}} (x^{2}_{t_{k}u} - \frac12 (u-t_{k})^{2H} ) du 
\label{e.i21}
\\
I_{22}
&=&\frac12 \sum_{0\leq t_{k}<t} y_{t_{k}}''\int_{t_{k}}^{t_{k+1}}   (u-t_{k})^{2H}   du . 
\notag
\end{eqnarray}
It is clear  that
\begin{eqnarray*}
I_{22} =\frac12 \sum_{0\leq t_{k}<t} y_{t_{k}}'' \cdot (1/n)^{2H+1}(2H+1)^{-1} .
\end{eqnarray*}
It follows that
\begin{eqnarray*}
n^{2H}I_{22} \to \frac{1}{4H+2}  \int_{0}^{t} y_{u}'' du
\qquad \text{in probability as $n\to\infty$.}
\end{eqnarray*}

We turn to the convergence of $I_{21}$. 
We first note that  a direct computation shows that 
\begin{eqnarray}
\Big|  \sum_{0\leq t_{k}<t}  \int_{t_{k}}^{t_{k+1}}  \Big( x^{2}_{t_{k}u} - \frac12 (u-t_{k})^{2H}  \Big) du \Big|_{L_{2}}^{2} &\lesssim& \sum_{0\leq t_{k},t_{k'}<t}  \int_{t_{k}}^{t_{k+1}}\int_{t_{k'}}^{t_{k'+1}}
n^{-4H}|\rho (k-k')|^{2}
du'du
\notag
\\
&\lesssim& n^{-4H-2} n(t-s) = n^{-4H-1}  (t-s) .
\label{e.i21a} 
\end{eqnarray}
Applying  Proposition \ref{prop.yhbd} to $I_{21}$ in \eqref{e.i21} with $\ell=1$ and $\be_{0}=1-H+\ep$ and invoking  the estimate \eqref{e.i21a} we obtain 
\begin{eqnarray*}
n^{2H}|I_{21}|_{L_{1}} \lesssim (t-s)^{1-H+\ep} (1/n)^{H-\ep}   
\end{eqnarray*}
for any $\ep>0$. In particular, we have  $n^{2H}|I_{21}|_{L_{1}} \to0$ as $n\to\infty$. 
Combining the convergence of $I_{21}$ and $I_{22}$ we obtain
\begin{eqnarray}\label{e.i2c}
n^{2H}I_{2}  \to   \frac{1}{4H+2}   \int_{0}^{t} y_{u}'' du
\qquad \text{in probability as $n\to\infty$.}
\end{eqnarray}

\noindent\emph{Step 4: Conclusion.} Substituting the convergences of $I_{i}$, $i=1,2,3$ in \eqref{e.i3c}, \eqref{e.i1c} and \eqref{e.i2c} into \eqref{e.riemanni} we obtain the   convergence \eqref{e.riemann}. 
\end{proof}

\begin{remark} 
The proof of  Proposition \ref{prop.riemann} 
suggests that  the exact rate of   convergence in \eqref{e.riemann}  is   $O(n^{-H-1/2})$ given that $y$ satisfies some regularity conditions. But the rate $o(n^{-2H})$ we have obtained  in  \eqref{e.riemann}   is sufficient for our purpose in this paper, and it requires a weaker condition of $y$. 
\end{remark}

In the next result we consider the   convergence rate of the Riemann sum under a weaker condition. We will also include the case when $H=1/2$.  
 

\begin{prop}\label{prop.riemann2}
Let $x$ be a one-dimensional fBm with Hurst parameter $H \leq 1/2$.  Let $(y,y')$ be a   rough path controlled by $(x, 2, H-\ep)$ in $L_{2}$ for some $\ep>0$.  
 Then there is a constant $K$ independent of $n$  such that: 
\begin{eqnarray}\label{e.riemann2}
\Big|\frac{1}{n}  \sum_{0\leq t_{k}<t} y_{t_{k}}  - \int_{0}^{t} y_{u}du\Big|_{L_{1}}
\leq Kn^{-2H+2\ep} 
\end{eqnarray}
for all $t\in[0,1]$.

\end{prop}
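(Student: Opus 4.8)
The plan is to reuse the decomposition of the Riemann-sum error developed in Proposition 3.7, but to keep track of $L_1$-norms rather than limits, and to work only with the first-order expansion of $y$ since we now control $(y,y')$ only up to two levels. First I would write the error as an integral of increments,
\begin{eqnarray*}
\frac{1}{n}\sum_{0\leq t_k<t} y_{t_k} - \int_0^t y_u\,du = -\sum_{0\leq t_k<t}\int_{t_k}^{t_{k+1}} \delta y_{t_k u}\,du,
\end{eqnarray*}
and then substitute the controlled-path expansion $\delta y_{t_k u} = y'_{t_k} x^1_{t_k u} + r^{(0)}_{t_k u}$, valid since $(y,y')$ is controlled by $(x,2,H-\ep)$ in $L_2$. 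This splits the error into a main term $\sum_{0\leq t_k<t} y'_{t_k}\int_{t_k}^{t_{k+1}} x^1_{t_k u}\,du$, which is exactly $\cj_0^t(y',h^n)$ for the increment $h^n$ of \eqref{e.hn} (applied to the controlled path $(y',\dots)$), and a remainder term $\sum_{0\leq t_k<t}\int_{t_k}^{t_{k+1}} r^{(0)}_{t_k u}\,du$.

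For the remainder term, the estimate $|r^{(0)}_{t_k u}|_{L_2}\leq K(t_{k+1}-t_k)^{2(H-\ep)}\leq K n^{-2H+2\ep}$ gives directly
\begin{eqnarray*}
\Big|\sum_{0\leq t_k<t}\int_{t_k}^{t_{k+1}} r^{(0)}_{t_k u}\,du\Big|_{L_1}\lesssim n\cdot n^{-1}\cdot n^{-2H+2\ep} = n^{-2H+2\ep},
\end{eqnarray*}
which already meets the target rate. For the main term I would invoke the upper-bound machinery of Proposition 3.5 (or its fBm specialization, Proposition 3.6) applied to the single-level controlled path built from $y'$, together with the $L_2$-estimate $|h^n_{st}|_{L_2}\lesssim n^{-H-1/2}(t-s)^{1/2}$ from \eqref{e.hnbd}. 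Since $y'$ is itself a process (with $\ell=1$ effectively), the crude bound $|\cj_0^t(y',h^n)|_{L_1}\lesssim |y'|_\infty\cdot |h^n_{0t}|_{L_1}\lesssim n^{-H-1/2}$ suffices; here I must be mildly careful because $H=1/2$ is now allowed, in which case $h^n_{st}$ is the Brownian-type increment and the estimate \eqref{e.hnbd} degenerates, so I would either reverify \eqref{e.hnbd} at $H=1/2$ directly or absorb the borderline case into the $n^{-2H+2\ep}$ bound since $n^{-H-1/2}=n^{-1}\leq n^{-1+2\ep}=n^{-2H+2\ep}$ there.

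The main obstacle is handling the main term uniformly across $H\leq 1/2$ while only assuming the weak (degree-two, $L_2$) controlled-path hypothesis. Because we cannot lean on $y''$ as in Proposition 3.7, the cancellation that produced the sharper $n^{-H-1/2}$ rate there is unavailable; the plan is therefore to be content with the weaker rate $n^{-2H+2\ep}$, which is what the remainder term forces anyway. Concretely, I expect the bottleneck to be verifying that the weighted-sum bound for $\cj_0^t(y',h^n)$ does not exceed $Kn^{-2H+2\ep}$: this amounts to checking that $n^{-H-1/2}\leq K n^{-2H+2\ep}$, i.e. $H-1/2\geq -2H+2\ep$ up to the $\ep$ slack, equivalently $3H\geq 1/2$, which fails for small $H$. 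Hence for $H<1/6$ the naive bound on the main term is \emph{better} than $n^{-2H}$, so in all cases the two contributions combine to give $|\,\cdot\,|_{L_1}\leq K n^{-2H+2\ep}$, completing the proof; the care lies in tracking which of the two terms dominates in each regime of $H$ and confirming both are bounded by the stated rate.
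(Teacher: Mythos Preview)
Your decomposition and the treatment of the remainder term $\sum_{k}\int_{t_k}^{t_{k+1}} r^{(0)}_{t_ku}\,du$ coincide with the paper's proof. The gap is in your handling of the main term $I_{1}:=\cj_{0}^{t}(y',h^{n})$. The ``crude bound'' $|\cj_{0}^{t}(y',h^{n})|_{L_{1}}\lesssim |y'|_{\infty}\cdot|h^{n}_{0t}|_{L_{1}}$ is not valid: since $\cj_{0}^{t}(y',h^{n})=\sum_{0\le t_{k}<t}y'_{t_{k}}h^{n}_{t_{k}t_{k+1}}$ and the increments $h^{n}_{t_{k}t_{k+1}}$ have varying signs, you cannot replace their weighted sum by $|y'|_{\infty}$ times the telescoped quantity $h^{n}_{0t}$. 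The actual crude estimate $\sum_{k}|y'_{t_{k}}|_{L_{2}}|h^{n}_{t_{k}t_{k+1}}|_{L_{2}}$ gives only $n\cdot n^{-H-1}=n^{-H}$, which is too weak whenever $H>2\ep$. Invoking Proposition~\ref{prop.yhbd} directly with the scaling $h=n^{H+1/2}h^{n}$ and $\be_{0}=1/2$ (which is what \eqref{e.hnbd} naively suggests) also fails, because then $\al+\be_{0}=(H-\ep)+1/2<1$ for every $H<1/2$, so condition~\eqref{e.bei} is not met and the sewing argument does not close.

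The paper's fix is to use the constraint $t-s\ge 1/n$ to trade powers of $n$ for powers of $(t-s)$ \emph{before} applying Proposition~\ref{prop.yhbd}. One sets $h:=n^{2H-2\ep}h^{n}$ and, since $1/2-H+2\ep\ge 0$, observes
\[
n^{2H-2\ep}|h^{n}_{st}|_{L_{2}}\;\lesssim\; n^{H-1/2-2\ep}(t-s)^{1/2}\;\le\;(t-s)^{1-H+2\ep}
\qquad\text{for }t-s\ge 1/n,
\]
so that $\be_{0}=1-H+2\ep$ and $\al+\be_{0}=1+\ep>1$. Proposition~\ref{prop.yhbd} with $\ell=1$ (applied to the single-level path $y'$, whose $r^{(0)}=\delta y'$ satisfies $|\delta y'_{st}|_{L_{2}}\le K(t-s)^{H-\ep}$) then gives $n^{2H-2\ep}|I_{1}|_{L_{1}}\lesssim 1$, which is exactly the target. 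Incidentally, your inequality check is also miscomputed: $n^{-H-1/2}\le n^{-2H+2\ep}$ is equivalent to $H\le 1/2+2\ep$, not to $3H\ge 1/2$, and so would hold automatically; the issue is that the premise $|I_{1}|_{L_{1}}\lesssim n^{-H-1/2}$ is itself unjustified.
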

\begin{proof}
Because $y$ is controlled by $(x,2, H)$ we have  the relation $\delta y_{t_{k}u} = y_{t_{k}}'x^{1}_{t_{k}u}+  r^{(0)}_{t_{k}u}$. 
So, similar to \eqref{e.riemanni},    we have the decomposition 
\begin{eqnarray}\label{e.i1i2}
 \int_{0}^{t} y_{u}du-\frac{1}{n}  \sum_{0\leq t_{k}<t} y_{t_{k}}   &=&  I_{1}+I_{2}, 
\end{eqnarray}
where
\begin{eqnarray*}
I_{1} =  \sum_{0\leq t_{k}<t}  y_{t_{k}}' \int_{t_{k}}^{t_{k+1}} x^{1}_{t_{k}u} du , 
  \qquad
I_{2} =     \sum_{0\leq t_{k}<t}\int_{t_{k}}^{t_{k+1}} r^{0}_{t_{k}u} du. 
\end{eqnarray*}
It is readily checked that $|I_{2}|_{L_{1}}\lesssim n^{-2H+2\ep}$. Let $h^{n}$ be defined in \eqref{e.hn}.  Applying Proposition \ref{prop.yhbd} with $h = n^{2H-2\ep}h^{n}$,  $\ell=1$ and $\be_{0}=1-H+2\ep$ we obtain that
\begin{eqnarray*}
n^{2H-2\ep}|I_{1}|_{L_{1}}\lesssim 1.
\end{eqnarray*}
Combining the estimate of $I_{1}$ and $I_{2}$ in \eqref{e.i1i2} we obtain the desired estimate   \eqref{e.riemann2}.
\end{proof}



    \subsection{Weighted $p$-variations}
    
  In this subsection we consider limit theorems for weighted random sums of some  fBms    functionals. 
  For $p>-1$, we denote   
  \begin{eqnarray}\label{e.cp}
 c_{p} = \mE(|N|^{p}) = \frac{2^{p/2} }{\sqrt{\pi}} \Gamma \left(\frac{p+1}{2}\right).
\end{eqnarray}
It is easy to see that $c_{p+2}=(p+1)c_{p}$, and  when $p$ is an even integer we have $c_{p}=\mE(N^{p}) = (p-1)(p-3)\cdots 1$. 
We define the sign function: 
\begin{eqnarray}\label{e.sign}
\text{$\text{sign}(x)=1, -1, 0$ for $x>0$, $x<0$ and  $x=0$, respectively.}
\end{eqnarray}

\begin{lemma}\label{prop.sign}
Let $x$  be 
       a fBm with Hurst parameter $H<1/2$.   
       Let   $ (y ,y'  )$ be a   process   controlled by $(x, 2 , H )$ almost surely. 
   Take $p>1/2$ and  let 
    \begin{eqnarray}\label{e.xps}
f (x)=|x|^{p+1}\cdot \emph{sign}(x) , \qquad x\in\RR.
\end{eqnarray}
Then  we have the  following convergence in probability: 
\begin{eqnarray}\label{e.conv2}
n^{H-1}   \sum_{0\leq t_{k}<t} y_{t_{k}}     
  f(n^{H}  x^{1}_{t_{k}t_{k+1}})\to      -\frac{1}{2 } c_{p+2} \int_{0}^{t} y_{u}' du  
\qquad \text{as $n\to\infty$.}
\end{eqnarray}

\end{lemma}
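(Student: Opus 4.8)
The plan is to mirror the proof of Lemma~\ref{lem.yxdt}, reducing everything to a single first-chaos resonance computation. First I would localize as in that lemma (via \cite[Lemma 3.4.5]{JP}), so that it suffices to assume $(y,y')$ is controlled by $(x,2,H-\ep)$ in $L_2$ for every small $\ep>0$. Writing $h^n_{st}=\sum_{s\le t_k<t}f(n^H\delta x_{t_k t_{k+1}})$, the sum in the statement is exactly $n^{H-1}\cj_0^t(y,h^n)$, so the machinery of Section~\ref{section.bd} is available. The crucial structural input is the Hermite expansion of $f$. Since $f(x)=|x|^{p+1}\text{sign}(x)$ is odd, its expansion $f=\sum_{q\ge1,\ q\text{ odd}}a_q H_q$ has only odd-order terms, and its first coefficient is
\begin{eqnarray*}
a_1 &=& \mE[f(N)H_1(N)]=\mE\big[|N|^{p+1}\text{sign}(N)\,N\big]=\mE[|N|^{p+2}]=c_{p+2},
\end{eqnarray*}
so $f$ has Hermite rank $d=1$ and the constant $c_{p+2}$ already appears. (The hypothesis $p>1/2$ is precisely what guarantees $f''(x)=(p+1)p|x|^{p-1}\text{sign}(x)\in L_2(\gamma)$, i.e. $f\in W^{2(\ell-1),2}(\RR,\gamma)$ for $\ell=2$, as required by the estimates of Section~\ref{section.bd}.) I would then split $f=c_{p+2}H_1+g$, where $g:=\sum_{q\ge3}a_q H_q$ has Hermite rank $\ge3$.

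The higher-chaos remainder $n^{H-1}\cj_0^t(y,h^{n,g})$, with $h^{n,g}$ built from $g$, is negligible. For $1/4<H<1/2$ the rank of $g$ exceeds $\tfrac1{2H}$, and Proposition~\ref{prop.yhbd2}(i) gives $|\cj_0^t(y,h^{n,g})|_{L_1}\lesssim n^{1/2}$, whence the normalized remainder is $O(n^{H-1/2})\to0$; for the remaining range of $H$ the same conclusion follows from the estimates \eqref{e.xihn}--\eqref{e.xihq} on $\cj_s^t(x^i,h^{n,q})$, the only relevant feature being that the $n^{H-1}$-normalization beats the growth of the weighted sum.

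Since $H_1(z)=z$, the surviving first-chaos term equals $c_{p+2}\,n^{2H-1}\sum_{0\le t_k<t}y_{t_k}\,\delta x_{t_k t_{k+1}}$, and it remains to show
\begin{eqnarray*}
n^{2H-1}\sum_{0\le t_k<t}y_{t_k}\,\delta x_{t_k t_{k+1}} &\longrightarrow& -\frac12\int_0^t y'_u\,du .
\end{eqnarray*}
Following Step~8 of Lemma~\ref{lem.yxdt}, I would fix a coarse partition $0=s_0<\cdots<s_m=t$, insert the controlled expansion $y_{t_k}=y_{s_j}+y'_{s_j}x^1_{s_j t_k}+r^{(0)}_{s_j t_k}$ on each block $[s_j,s_{j+1}]$, and treat the three resulting sums. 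The $y_{s_j}$-sum telescopes to $\sum_j y_{s_j}\delta x_{s_j s_{j+1}}$, a fixed $O(1)$ random variable that dies under $n^{2H-1}$; the $r^{(0)}$-sum vanishes by Proposition~\ref{prop.yhbd} applied with weight $h=n^{2H-1}\delta x$, $\al=H-\ep$, $\be_0=1-H$ and $\be_1=1$ (using $|\cj_s^t(1,\delta x)|_{L_2}=(t-s)^H$ and $|\cj_s^t(x^1,\delta x)|_{L_2}\lesssim n^{1-2H}(t-s)$), for which $\be=1+H-2\ep>1$.

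The main obstacle, and the whole source of the limit, is the resonance carried by the $y'_{s_j}$-sum, namely the claim that $n^{2H-1}\cj_s^t(x^1,\delta x)\to-\tfrac12(t-s)$ in $L_2$. As in Steps~5--7 of Lemma~\ref{lem.yxdt}, I would compute the mean directly: by stationarity and self-similarity, $\mE[\cj_s^t(x^1,\delta x)]=\sum_{s\le t_k<t}\langle\mathbf 1_{[s,t_k]},\mathbf 1_{[t_k,t_{k+1}]}\rangle_\ch$, and the inner products converge, after rescaling by $n^{-2H}$, to $\langle\mathbf 1_{(-\infty,0]},\mathbf 1_{[0,1]}\rangle_\ch=-\tfrac12$, giving $n^{2H-1}\mE[\cj_s^t(x^1,\delta x)]\to-\tfrac12(t-s)$. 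Separately I would show $n^{2H-1}|\cj_s^t(x^1,\delta x)|_{L_2}\to\tfrac12(t-s)$ by the same covariance/Malliavin-derivative split into $A_1,A_{21},A_{22}$-type pieces used there; matching the two limits forces the fluctuation to vanish and yields $L_2$-convergence to the deterministic value $-\tfrac12(t-s)$. Summing the $y'_{s_j}$-contributions over the blocks then produces $-\tfrac12\int_0^t y'_u\,du$. The conceptual difficulty here is exactly that the left-point Riemann--Stieltjes sum $\sum y_{t_k}\delta x_{t_k t_{k+1}}$ does not converge to the rough integral; under the resonant normalization $n^{2H-1}$ it instead isolates the drift-type term carrying the constant $-\tfrac12$.
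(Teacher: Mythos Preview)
The paper's proof is a five-line verification of the hypotheses of \cite[Theorem 4.14(ii)]{LT20}: one checks that $f$ has Hermite rank $d=1$, that $\ell=d+1=2$ matches the assumed controlled structure $(y,y')$, that $f\in W^{2,2}(\RR,\ga)$ (this is where $p>1/2$ enters), and computes $a_1=\mE[|N|^{p+1}\text{sign}(N)\,N]=c_{p+2}$; the limit then follows directly from that cited theorem. You instead attempt a self-contained argument by splitting $f=c_{p+2}H_1+g$ and handling the first chaos and the rank-$\ge 3$ remainder separately, mimicking the proof of Lemma~\ref{lem.yxdt}.

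Your first-chaos resonance computation is correct, and the analogy with Steps~5--8 of Lemma~\ref{lem.yxdt} carries over: the estimates $|\cj_s^t(1,\delta x)|_{L_2}\lesssim(t-s)^H$ and $|\cj_s^t(x^1,\delta x)|_{L_2}\lesssim n^{1-2H}(t-s)$ do give $\be_0=1-H$, $\be_1=1$ and hence $\be=1+H-2\ep>1$, so Proposition~\ref{prop.yhbd} controls the $r^{(0)}$-block and the limit $-\tfrac12(t-s)$ emerges as you describe. The gap is in your treatment of the remainder $n^{H-1}\cj_0^t(y,h^{n,g})$ for $H\le 1/4$. For $i=0,1$ and rank-$\ge 3$ summands, the only bound delivered by \eqref{e.xihn}--\eqref{e.xihq} is the $q>i$ branch $|\cj_s^t(x^i,h^{n,g})|_{L_2}\lesssim n^{1/2}(t-s)^{iH+1/2}$. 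Feeding $\be_i=iH+1/2$ into Proposition~\ref{prop.yhbd} with $\ell=2$ gives sewing exponent $\be=2(H-\ep)+1/2$, which is $\le 1$ exactly when $H\le 1/4$, so that proposition does not apply. Nor can you fall back on Proposition~\ref{prop.yhbd2}: part~(i) requires $\ell=[1/(2H)]+1\ge 3$ in this range, and part~(ii) applied to $g$ would need $\ell=d_g+1\ge 4$. With only $(y,y')$ in hand, none of the paper's tools bounds the remainder for small $H$, and the sentence ``the $n^{H-1}$-normalization beats the growth of the weighted sum'' is not justified. This is precisely the work that \cite[Theorem 4.14(ii)]{LT20} performs internally, and why the paper cites it rather than re-running the chaos-by-chaos analysis.
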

\begin{proof}
We prove the  convergence \eqref{e.conv2} by applying   \cite[Theorem 4.14 (ii)]{LT20}. 
It is easy to see that  the function $f$ in \eqref{e.xps}  belongs to $L_{2}(\ga)$  with  Hermite rank $d=1$ as long as $p>-3/2$. Take $\ell=d+1=2$. By assumption $(y,y')$ is a   rough path controlled by $(x,\ell, H)$ almost surely. Furthermore, it is easy to see that $f\in W^{2,2}(\RR, \ga)$ when $p>1/2$.  In summary, we have shown that 
  the conditions in   \cite[Theorem 4.14 (ii)]{LT20} hold for the weighted sum in \eqref{e.conv2}.  
Since $f$ is an odd function it has the    decomposition $f(x) = \sum_{q=0}^{\infty} a_{2q+1}H_{2q+1}(x)$. We compute the first coefficient:     
\begin{eqnarray*}
a_{1} = \mE[ |N|^{p+1}\cdot \text{sign}(N)  N] = \mE[|N|^{p+2}] = c_{p+2}. 
\end{eqnarray*}
    Applying \cite[Theorem 4.14]{LT20}
    we thus obtain the convergence \eqref{e.conv2}.
\end{proof}

   Let     $f (x) = {|x|^{p}}  -c_{p}$, $x\in\RR$.  
   It is easily seen  that $f \in L_{2}(\ga)$ with  Hermite rank $d= 2$ when   $p>-\frac12$.  
Furthermore, we have  the decomposition $f(x) = \sum_{q=1}^{\infty}  {a}_{2q}H_{2q} (x) $, where the constants $a_{2q}$ are given by: 
  \begin{eqnarray}\label{e.a2q}
a_{2q} &=& \sum_{r=0}^{  q } \frac{(-1)^{r}}{2^{r} r! (2q-2r)!}  
c_{2q-2r+p} , \qquad q=1,2,\dots. 
\end{eqnarray} 
We also set the constant  $\si $: 
\begin{eqnarray}\label{e.pvar2}
 {\sigma}^{2} = \sum_{q=1}^{\infty} (2q)!  {a}^{2}_{2q} \sum_{k\in \ZZ}\rho(k)^{2q}   ,
\end{eqnarray}
where $\rho$ is defined in \eqref{e.rho}. 
Note that  when $H=1/2$ we have $\rho(0)=1$ and $\rho(k)=0$ for $k\neq 0$, and so \eqref{e.pvar2}   gives  $\si  = \|f\|_{L_{2}(\ga)}=(c_{2p}-c_{p}^{2})^{1/2}$. 

The next limit theorem result  is an application  of  \cite[Theorem 4.7 and Theorem 4.14]{LT20}. The proof is similar to Lemma \ref{prop.sign} and is omitted for sake of conciseness.   In the following $\xrightarrow{ \  stable~ f.d.d. \ }$ stands for the stable convergence of finite dimensional distributions. That is,  we say  $X^{n}_{t}\xrightarrow{ \  stable~ f.d.d. \ } X_{t}$, $t\in[0,1]$     
 if   the finite dimensional distribution of the process $X^{n}_{t}$, $t\in[0,1]$ converges stably  to that of the process $X_{t}$, $t\in [0,1]$ as $n\to\infty$.

%
   
\begin{prop}\label{prop.pvar}
Let $x$  be 
       a fBm with Hurst parameter $H\leq1/2$. 
       Let   $(y^{(0)},\dots, y^{(\ell-1)})$ be a   process controlled by $(x, \ell, H)$ almost surely for some $\ell\in\NN$. 
       Let  $a_{2q}$ and $\si$ be constants given in \eqref{e.a2q}-\eqref{e.pvar2}. 
Then: 

\noindent
(i)  For $\frac12\geq H >\frac14$,  $\ell=2$  
and $p\in( 3/2,\infty)$ we have the  convergence:
\begin{eqnarray}\label{e.pvar}
\frac{1}{\sqrt{n}} \sum_{0\leq t_{k}<t}  y_{t_{k}} ({| n^{H}  x^{1}_{t_{k}t_{k+1}}|^{p}}  - c_{p}) \xrightarrow{ \  stable~ f.d.d. \ }  {\sigma} \int_{0}^{t} y_{t} dW_{t}  
\qquad\text{for $t\in[0,1]$, }
\end{eqnarray}
where $W$ is a Wiener process independent of $x$. 

\noindent(ii) For $H = \frac14$,  $\ell=3$ and $p\in(7/2,\infty)\cup\{ 2\}$ we have the  convergence:
\begin{eqnarray*}
\frac{1}{\sqrt{n}} \sum_{0\leq t_{k}<t} y_{t_{k}} ({| n^{H}  x^{1}_{t_{k}t_{k+1}}|^{p}}  - c_{p}) \xrightarrow{ \  stable~ f.d.d. \ }   {\sigma} \int_{0}^{t} y_{u} dW_{u}+ \frac{ pc_{p}}{8}   \int_{0}^{t} y_{u}''du    
\qquad\text{for $t\in[0,1]$. } 
\end{eqnarray*}

\noindent(iii) For $H < \frac14$, $\ell=3$ and $p\in(7/2,\infty)\cup\{ 2\}$ we have the    convergence in probability:
\begin{eqnarray*}
n^{2H -1} \sum_{0\leq t_{k}<t} y_{t_{k}} ({| n^{H}  x^{1}_{t_{k}t_{k+1}}|^{p}}  - c_{p}) \xrightarrow{ \   \ }  \frac{ pc_{p}}{8}   \int_{0}^{t} y_{u}''du   
\qquad\text{for $t\in[0,1]$. }
\end{eqnarray*}

\end{prop}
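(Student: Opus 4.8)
The plan is to recognize each of the three weighted sums as a discrete rough integral of the type treated in \cite{LT20} and to deduce the convergences by applying the limit theorems of that paper to the specific functional $f(x)=|x|^{p}-c_{p}$. Writing $h^{n}_{st}=\sum_{s\leq t_{k}<t}f(n^{H}\delta x_{t_{k}t_{k+1}})$ as in \eqref{e.hst}, and noting that $x^{1}_{t_{k}t_{k+1}}=\delta x_{t_{k}t_{k+1}}$, each sum is, up to its normalization, the discrete integral $\cj_{0}^{t}(y,h^{n})$ of $\bfy$ against $h^{n}$ in the sense of \eqref{e.jfg2}. The first step is therefore to verify the hypotheses of \cite[Theorem 4.7 and Theorem 4.14]{LT20}: the function $f$ is even, lies in $L_{2}(\ga)$ with Hermite rank $d=2$ whenever $p>-1/2$, and admits the expansion $f=\sum_{q\geq1}a_{2q}H_{2q}$ with coefficients \eqref{e.a2q}. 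The Sobolev regularity required by \cite{LT20}, namely $f\in W^{2(\ell-1),2}(\RR,\ga)$, is exactly what forces the ranges of $p$: since the $j$-th derivative of $|x|^{p}$ behaves like $|x|^{p-j}$ near the origin, membership in $W^{2,2}$ (for $\ell=2$) requires $p>3/2$, while membership in $W^{4,2}$ (for $\ell=3$) requires $p>7/2$, and the exceptional values $p\in\{2,4\}$ give polynomials with a finite chaos expansion, which are admissible directly.

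A short computation from \eqref{e.a2q} gives the leading Hermite coefficient $a_{2}=\tfrac12 c_{p+2}-\tfrac12 c_{p}=\tfrac{p}{2}c_{p}$, using $c_{p+2}=(p+1)c_{p}$ from \eqref{e.cp}. This is the source of the drift constant in parts (ii) and (iii), since $\tfrac{pc_{p}}{8}=\tfrac14 a_{2}$, the factor $\tfrac14$ being the universal geometric constant produced by the second-order rough-path correction at the critical regularity. Likewise the Gaussian scale $\si$ in \eqref{e.pvar2} is precisely the Breuer--Major variance $\sum_{q\geq1}(2q)!\,a_{2q}^{2}\sum_{k}\rho(k)^{2q}$ attached to $f$.

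For part (i), where $H>1/4$, the second-order contribution of the rank-$2$ functional is of lower order than $n^{-1/2}$ and disappears in the limit, so only the central-limit fluctuation survives; applying \cite[Theorem 4.7]{LT20} with $\ell=2$ yields $n^{-1/2}\cj_{0}^{t}(y,h^{n})\xrightarrow{ \  stable~ f.d.d. \ }\si\int_{0}^{t}y_{u}\,dW_{u}$ with $W$ a Wiener process independent of $x$. For parts (ii) and (iii) one instead appeals to \cite[Theorem 4.14]{LT20} with $\ell=3$, which isolates both the Gaussian part and the second-order weighted sum carrying the factor $a_{2}$. At $H=1/4$ both pieces live at the common scale $n^{-1/2}$: the fluctuation converges stably to $\si\int_{0}^{t}y_{u}\,dW_{u}$, while the drift converges in probability to $\tfrac14 a_{2}\int_{0}^{t}y_{u}''\,du=\tfrac{pc_{p}}{8}\int_{0}^{t}y_{u}''\,du$; since the latter is measurable with respect to $x$, Lemma \ref{lem.stable} allows us to add the two limits and obtain the stated stable convergence. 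For $H<1/4$ the Gaussian part is of strictly smaller order than the drift, so after the heavier renormalization $n^{2H-1}$ only the second-order term remains and the convergence is in probability to the same drift.

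The main obstacle is the critical case $H=1/4$: one must show that the second-order weighted sum built from $H_{2}$ neither vanishes nor blows up but converges to a genuine drift, and must pin down the constant $\tfrac14$ that produces the normalization $\tfrac{pc_{p}}{8}$. This rests on the second-order expansion $\delta y_{t_{k}u}=y'_{t_{k}}x^{1}_{t_{k}u}+y''_{t_{k}}x^{2}_{t_{k}u}+r^{(0)}_{t_{k}u}$ of the controlled path together with the covariance analysis of the sums $\cj(x^{i},h^{n,q})$ underlying \cite[Theorem 4.14]{LT20}, and on the asymptotic independence of the chaotic fluctuation from $x$ that legitimizes the use of Lemma \ref{lem.stable}. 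The remaining work, namely the finite-dimensional joint convergence and the passage from single-increment estimates to the full interval via the discrete sewing lemma, is routine given Proposition \ref{prop.yhbd} and the tools already assembled, which is why the full argument parallels that of Lemma \ref{prop.sign}.
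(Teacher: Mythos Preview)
Your proposal is correct and follows exactly the approach indicated in the paper, which simply states that the result is an application of \cite[Theorem 4.7 and Theorem 4.14]{LT20} with a proof parallel to Lemma \ref{prop.sign}. You have in fact supplied more detail than the paper itself does: the verification that $f(x)=|x|^{p}-c_{p}$ has Hermite rank $d=2$, the Sobolev regularity check yielding the ranges of $p$, and the computation $a_{2}=\tfrac{p}{2}c_{p}$ giving the drift constant $\tfrac{pc_{p}}{8}$ are precisely the ingredients one needs to invoke those theorems, mirroring the structure of the proof of Lemma \ref{prop.sign}.
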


\section{Limit theorem for $p$-variation of   processes controlled by fBm}\label{section.main}

In this section we consider the convergence of $p$-variation for processes controlled by fBm.  
Throughout the section we let  
$\phi(x) = |x|^{p} $, $ x\in\RR$    
for $p\geq 1$. 
 We first state   the following elementary result. 
 \begin{lemma}\label{lem.phi}
Denote by $\phi^{(j)}$   the $j$th derivative of $\phi$. 
For convenience we will also   write $\phi(x)=\phi^{(0)}(x)$, $\phi'(x)=\phi^{(1)}(x) $ and $\phi''(x)=\phi^{(2)}(x)$.   For   $j=0,1,\dots, [p]$ we  set
\begin{eqnarray}\label{e.fj}
\phi_{j}(x)= |x|^{p-j}\cdot \emph{sign}(x)^{j}
\qquad
\text{and}
\qquad
K_{j} =p \cdots (p-j+1)= \prod_{i=0}^{j-1} (p-i),
\end{eqnarray}
where recall that $\emph{sign}(x)$ is defined in \eqref{e.sign} and we use  the  convention that  $\prod_{i=0}^{-1} (p-i)=1$. For example, we have $K_{0}=1$, $K_{1}=p$, $K_{2}=p(p-1)$.   Then  

\noindent(i)  When $p$ is odd,    $\phi$ has derivative up to order $[p]-1$, and $\phi^{([p]-1)}$ is Lipschitz. When $p$ is  even, $\phi$ has derivative of all orders.   When $p$ is non-integer, $\phi$ has derivative up to order $[p]$. 

\noindent(ii) For $x\in\RR$ we have 
\begin{eqnarray}\label{e.fjp}
\phi^{(j)}(x) 
&=& K_{j}\cdot \phi_{j}(x), 
\qquad j=0,1,\dots, [p],  
\end{eqnarray} 
 with the exception that  $\phi^{(p)}(0)$ is undefined when $p$ is an odd number. 
In particular, 
when $p$ is an odd number we have
$\phi^{(j)}(x) = K_{j}x^{p-j}\emph{sign}(x)$, 
while when $p$ is   even   we get
$\phi^{(j)}(x) = K_{j}x^{p-j} $.
\end{lemma}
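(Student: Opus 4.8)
The core of the lemma is the derivative formula (ii); part (i) then follows by reading off precisely where the formula ceases to be differentiable. The plan is to first establish (ii) away from the origin by induction on $j$, and only afterwards treat the point $x=0$ separately. The base case $j=0$ is the identity $\phi(x)=|x|^{p}=\phi_{0}(x)$ together with $K_{0}=1$. For the inductive step I would isolate the single differentiation rule
\[
\frac{d}{dx}\phi_{j}(x) = (p-j)\,\phi_{j+1}(x), \qquad x\neq 0,
\]
verified by splitting into $x>0$, where $\phi_{j}(x)=x^{p-j}$, and $x<0$, where $\phi_{j}(x)=(-1)^{j}|x|^{p-j}$; in both regimes the elementary derivative equals $(p-j)\phi_{j+1}(x)$. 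Since $K_{j+1}=(p-j)K_{j}$, combining this rule with the inductive hypothesis $\phi^{(j)}=K_{j}\phi_{j}$ yields $\phi^{(j+1)}=K_{j+1}\phi_{j+1}$ on $\RR\setminus\{0\}$ for every admissible $j$.

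\emph{Behaviour at the origin.} For $j$ with $p-j>0$, the explicit form $\phi^{(j)}(x)=K_{j}|x|^{p-j}\,\mathrm{sign}(x)^{j}$ shows that $\phi^{(j)}$ is continuous at $0$ with $\phi^{(j)}(0)=0=K_{j}\phi_{j}(0)$. To push the formula through $0$ I would compute the difference quotient of $\phi^{(j)}$ at $0$; from the one-sided expressions above it is comparable to $|x|^{p-j-1}$, so it tends to $0$ and the two one-sided limits agree precisely when $p-j-1>0$, giving $\phi^{(j+1)}(0)=0=K_{j+1}\phi_{j+1}(0)$. Thus the formula extends to $x=0$ for exactly those orders $j$ with $p-j-1>0$, i.e. as long as the surviving exponent $p-j$ stays strictly above $1$.

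\emph{Case analysis for (i).} When $p$ is even, $|x|^{p}=x^{p}$ is a polynomial, hence $C^{\infty}$, and the formula reduces to $\phi^{(j)}(x)=K_{j}x^{p-j}$ after noting $|x|^{p-j}\mathrm{sign}(x)^{j}=(-1)^{p}x^{p-j}=x^{p-j}$. When $p$ is non-integer, $p-j>0$ for all $j\leq[p]$, so the previous two paragraphs produce derivatives up to order $[p]$, with $\phi^{([p])}(x)=K_{[p]}|x|^{p-[p]}\mathrm{sign}(x)^{[p]}$ continuous but not differentiable at $0$ since $p-[p]-1<0$. When $p$ is odd the same reasoning yields derivatives up to order $p-1$; here $p-1$ is even, so $\phi^{(p-1)}(x)=K_{p-1}|x|$, which is Lipschitz yet fails to be differentiable at $0$, the one-sided derivatives being $\pm K_{p-1}=\pm K_{p}$. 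This simultaneously establishes the stated regularity and explains why $\phi^{(p)}(0)$ is undefined while $\phi^{(p)}(x)=K_{p}\,\mathrm{sign}(x)$ for $x\neq0$.

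\emph{Main obstacle.} The induction away from the origin is routine bookkeeping. The delicate part is the origin analysis: the one-sided difference-quotient computation that pins down the exact top order of differentiability in each case, and in the odd case verifies both the Lipschitz property of $\phi^{(p-1)}$ and the non-existence of $\phi^{(p)}(0)$ from the mismatch of the one-sided slopes.
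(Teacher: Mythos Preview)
Your proposal is correct; the induction on $j$ away from the origin together with the one-sided difference-quotient analysis at $0$ is exactly the right way to establish both (ii) and the sharp regularity claims in (i). The paper itself does not supply a proof of this lemma---it is announced as ``the following elementary result'' and simply stated---so your argument in fact fills in what the authors omit.
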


 Following is our  main result.  Recall that we define $\phi(x) = |x|^{p} $, $ x\in\RR$, and  for  a continuous  process $y$     the $p$-variation of $y$ over the time interval $[0,t]$ is defined as   
\begin{eqnarray*}
\sum_{0\leq t_{k}<t}  \phi(\delta y_{t_{k}t_{k+1}})=\sum_{0\leq t_{k}<t} |\delta y_{t_{k}t_{k+1}}|^{p}  , 
\end{eqnarray*}  
where $t_{k}=k/n$, $k=0,1,\dots,n$ is a  uniform partition of $[0,1]$. 
\begin{thm}\label{thm.main}
Let $x$  be 
       a fBm with Hurst parameter $H\leq 1/2$ and   $(y^{(0)},\dots, y^{(\ell-1)})$ be a   process controlled by $(x, \ell, H)$ almost surely (see Definition \ref{def.control}) for some $\ell\in\NN$. Recall that    $\phi'$ and $\phi''$  are derivatives of $\phi$ defined in   \eqref{e.fjp}, and   $c_{p}$ and $\si$ are     constants   given  in \eqref{e.cp} and \eqref{e.pvar2}, respectively.   
Let 
\begin{eqnarray*}
U^{n}_{t} = n^{pH-1}
\sum_{0\leq t_{k}<t} \phi(\delta y_{t_{k}t_{k+1}})  - c_{p}\int_{0}^{t} \phi(y'_{u}) du
 \qquad t\in[0,1].  
\end{eqnarray*}
 Then

\noindent (i) When $1/4< H\leq1/2$, $p\in[3,\infty)\cup\{2\}$  and $\ell\geq4$ we have the stable f.d.d.  convergence 
\begin{eqnarray}\label{e.uconv1}
\lp n^{1/2}U^{n}  , x   \rp \to (U,x),
\qquad \text{as $n\to\infty$, }
 \end{eqnarray}
 where
 \begin{eqnarray*}
U_{t} = \si \int_{0}^{t} \phi(y_{u}') dW_{u}\qquad t\in[0,1]\,, 
\end{eqnarray*}
and $W$ is a standard Brownian motion independent of $x$. 

\noindent (ii) When $H=1/4$, $p\in[5,\infty)\cup\{2,4\}$ and $\ell\geq 6$ we have the stable f.d.d.  convergence 
\begin{eqnarray}\label{e.uconv2}
\lp n^{1/2}U^{n}  , x   \rp \to (U,x),
\qquad \text{as $n\to\infty$, }
 \end{eqnarray}
 where
 \begin{eqnarray*}
U_{t} = \si \int_{0}^{t} \phi(y_{u}')dW_{u} 
- \frac{c_{p}}{8} \int_{0}^{t}  
    \phi''(y_{u}')(y_{u}'')^{2} du+ \frac{(p-2)c_{p}}{24} \int_{0}^{t}  \phi'(y_{u}') y_{u}''' 
    du . 
\end{eqnarray*}



 \noindent (iii) When $H<1/4$, $p\in[5,\infty)\cup\{2,4\}$  and $\ell\geq 6$ 
 we have the  convergence in probability 
\begin{eqnarray}\label{e.uconv3}
 n^{2H}U^{n}_{t}   \to U_{t}
\qquad
  \text{as $n\to\infty$  }
\end{eqnarray}
for  $t\in[0,1]$, where
 \begin{eqnarray*}
U_{t} = - \frac{c_{p}}{8} \int_{0}^{t}  
    \phi''(y_{u}')(y_{u}'')^{2} du+ \frac{(p-2)c_{p}}{24} \int_{0}^{t}  \phi'(y_{u}') y_{u}''' 
    du. 
\end{eqnarray*}

\end{thm}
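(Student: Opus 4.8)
The plan is to expand $\phi(\delta y_{t_kt_{k+1}})=|\delta y_{t_kt_{k+1}}|^{p}$ around its first-order term $y'_{t_k}\,x^{1}_{t_kt_{k+1}}$ and to track which pieces of the expansion survive in each of the three regimes. By localization (as in the proof of Lemma \ref{lem.yxdt}, via \cite[Lemma 3.4.5]{JP}) I may assume $\bfy$ is controlled in $L_{2}$. Setting $b_{t_k}:=\delta y_{t_kt_{k+1}}-y'_{t_k}x^{1}_{t_kt_{k+1}}=\sum_{i=2}^{\ell-1}y^{(i)}_{t_k}x^{i}_{t_kt_{k+1}}+r^{(0)}_{t_kt_{k+1}}$ for the higher-order part coming from \eqref{e.y}, Taylor's formula with Lagrange remainder gives
\[
\phi(\delta y_{t_kt_{k+1}})=\phi(y'_{t_k}x^{1}_{t_kt_{k+1}})+\phi'(y'_{t_k}x^{1}_{t_kt_{k+1}})\,b_{t_k}+\frac{1}{2}\phi''(y'_{t_k}x^{1}_{t_kt_{k+1}})\,b_{t_k}^{2}+\cdots,
\]
where Lemma \ref{lem.phi} guarantees that the $p$-restrictions supply enough continuous derivatives of $\phi$ for the order needed. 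Grouping the resulting summands by the total power of $\delta x$ they carry and simplifying with the homogeneity identities \eqref{e.fjp}, I organize $n^{pH-1}\sum_{0\le t_k<t}\phi(\delta y_{t_kt_{k+1}})$ into a leading sum $T_0=\frac{1}{n}\sum_k\phi(y'_{t_k})\,|n^{H}x^{1}_{t_kt_{k+1}}|^{p}$, a first correction $T_1$ built on the odd functional $|z|^{p+1}\mathrm{sign}(z)$, a second correction $T_2$ built on $|z|^{p+2}$, and a remainder $R$ gathering all contributions of order $(\delta x)^{p+3}$ and those involving $r^{(0)}$.

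The term $T_0$ carries the first-order behaviour. Writing $|n^{H}x^{1}|^{p}=c_{p}+(|n^{H}x^{1}|^{p}-c_{p})$, the mean part $\frac{c_{p}}{n}\sum_k\phi(y'_{t_k})$ differs from $c_{p}\int_{0}^{t}\phi(y'_u)\,du$ by $o(n^{-2H})$ by Proposition \ref{prop.riemann} applied to the controlled process $\phi(y')$ (legitimate by Lemma \ref{lem.control} once $\ell\ge4$), hence is absorbed into the centering at every rate considered; the fluctuation part $\frac{1}{\sqrt n}\sum_k\phi(y'_{t_k})(|n^{H}x^{1}|^{p}-c_{p})$ is exactly the object of Proposition \ref{prop.pvar}, contributing the stable term $\sigma\int_0^t\phi(y'_u)\,dW_u$ and, when $H\le\frac14$, the extra term $\frac{pc_{p}}{8}\int_0^t[\phi(y')]''\,du$ with $[\phi(y')]''=\phi''(y')(y'')^2+\phi'(y')y'''$ by Lemma \ref{lem.control}(ii). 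For $T_1$, after extracting a factor $n^{-2H}$ it is a weighted sum of the type in Lemma \ref{prop.sign} with weight $w=\phi_1(y')\,y''=\frac{1}{p}\phi'(y')\,y''$, so that $n^{2H}T_1\to-\frac{c_{p+2}}{4}\int_0^t[\phi''(y')(y'')^2+\phi'(y')y''']\,du$, the Gubinelli derivative $w'$ being computed by the Leibniz rule behind Lemma \ref{lem.control}. Finally $T_2=n^{-2H}\cdot\frac{1}{n}\sum_k g_{t_k}\,|n^{H}x^{1}|^{p+2}$ with $g=\frac{1}{6}\phi'(y')y'''+\frac{1}{8}\phi''(y')(y'')^2$; replacing $|n^{H}x^{1}|^{p+2}$ by its mean $c_{p+2}$ (the centered part being negligible after the $n^{-2H}$ factor) and using Proposition \ref{prop.riemann2} for the Riemann sum yields $n^{2H}T_2\to c_{p+2}\int_0^t g_u\,du$.

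Collecting the three limits and substituting $c_{p+2}=(p+1)c_{p}$ produces the decisive cancellation: the coefficient of $\int_0^t\phi''(y')(y'')^2\,du$ equals $\frac{pc_p}{8}-\frac{(p+1)c_p}{4}+\frac{(p+1)c_p}{8}=-\frac{c_p}{8}$, and that of $\int_0^t\phi'(y')y'''\,du$ equals $\frac{pc_p}{8}-\frac{(p+1)c_p}{4}+\frac{(p+1)c_p}{6}=\frac{(p-2)c_p}{24}$, which are exactly the constants in the statement. When $H>\frac14$ both corrections are $O(n^{-2H})=o(n^{-1/2})$ and disappear, leaving the single stable limit; when $H=\frac14$ all three sums live at the common rate $n^{-1/2}$; and when $H<\frac14$ the Brownian term is $o(n^{-2H})$ and only the two corrections remain, now in probability. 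To assemble these modes of convergence into one statement I invoke Lemma \ref{lem.stable}: in cases (i) and (ii) the $T_0$ fluctuation converges stably while the corrections from $T_1,T_2$ converge in probability to $\sigma(x)$-measurable limits, so Lemma \ref{lem.stable} promotes their sum to a stable limit.

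The remaining and most delicate point is to show that $R$ is negligible at the relevant rate, and this is the main obstacle. Each term of $R$ is a weighted random sum whose weight is a controlled process (Lemma \ref{lem.control}) against an increment built either from a functional of the form $|z|^{p+j}$ or $|z|^{p+j}\mathrm{sign}(z)$ with $j\ge3$, or from the genuine remainder $r^{(0)}$ with $|r^{(0)}_{t_kt_{k+1}}|_{L_2}\lesssim n^{-\ell H}$. I would bound each such sum in $L_1$ by Proposition \ref{prop.yhbd2}, whose two branches (Hermite rank $d>\frac{1}{2H}$ versus $d\le\frac{1}{2H}$) generate the factors $n^{1/2}$ or $n^{1-dH}$; combined with the extra decay $n^{-3H}$ from the additional power of $\delta x$, these give bounds that are $o(n^{-1/2})$ once $\ell\ge4$ (case (i)) and $o(n^{-2H})$ once $\ell\ge6$ (cases (ii) and (iii)). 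The difficulty is twofold: bookkeeping the Hermite ranks of the many functionals so as to apply the correct branch of Proposition \ref{prop.yhbd2} uniformly, and the non-smoothness of $\phi=|x|^{p}$ at the origin, which is precisely what forces the ranges $p\in[3,\infty)\cup\{2\}$ and $p\in[5,\infty)\cup\{2,4\}$ --- by Lemma \ref{lem.phi} these guarantee that the Taylor expansion can be pushed to the required order with a Lagrange remainder one can control.
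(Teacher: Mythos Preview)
Your proposal is correct and follows essentially the same route as the paper: localization, Taylor expansion of $\phi(\delta y_{t_kt_{k+1}})$ around $y'_{t_k}x^{1}_{t_kt_{k+1}}$, identification of the leading term via Proposition~\ref{prop.pvar} and Proposition~\ref{prop.riemann}/\ref{prop.riemann2}, the odd correction via Lemma~\ref{prop.sign}, the even correction via mean-replacement, remainder control via Proposition~\ref{prop.yhbd2}, and assembly via Lemma~\ref{lem.stable}; your grouping $T_0,T_1,T_2,R$ coincides with the paper's $J_1,J_2,(J_3+J_4),(J_5+J_6+I_2)$ and your coefficient computation matches. One small caveat: at the boundary $H=1/2$ both Lemma~\ref{prop.sign} and Proposition~\ref{prop.riemann} are stated only for $H<1/2$, so for case~(i) you should bound the corrections directly by Proposition~\ref{prop.yhbd2}(ii) and use Proposition~\ref{prop.riemann2} for the Riemann sum (this is what the paper does in its Step~6), rather than inferring $T_1=O(n^{-2H})$ from the convergence in Lemma~\ref{prop.sign}.
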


  
\begin{proof}
 Take $
\ep>0$ sufficiently small.  Recall that $y^{(i)}$, $r^{(i)}$, $i=0,\dots,\ell-1$ and $G_{\bfy}=G_{\bfy,\ep}$ are defined in Definition \ref{def.control}.  Let $G_{x}=G_{x,\ep}$ be a finite random variable such that $|x^{1}_{st}|\leq G_{x} (t-s)^{H-\ep}$. 
  By localization (cf. \cite[Lemma 3.4.5]{JP}) we can and will assume that  there exists some constant $C_{0}>0$  such that  
\begin{eqnarray}\label{e.local}
\sum_{i=0}^{\ell-1}
\Big(
\sup_{t\in [0,1]} |y^{(i)}_{t}|+\sup_{s,t\in [0,1]} |r^{(i)}_{st}|
\Big)+G_{\bfy} +G_{x} 
<C_{0}
\qquad \text{almost surely.} 
\end{eqnarray}
Note that under this assumption it is clear that $(y^{(0)},\dots, y^{(\ell-1)})$ is  controlled by $(x, \ell, H-\ep)$ in $L_{p}$ for any $p>0$ (see Definition \ref{def.control}).

We divide the proof  into several steps. 

\noindent\emph{Step 1: Taylor's expansion of the function $\phi$.}
For convenience let us  denote   
\begin{eqnarray*}
q = \begin{cases}
 p&\text{when $p$ is an even number.}\\
  [p]-1&\text{otherwise.}
\end{cases}
\end{eqnarray*}
 Applying the    Taylor expansion to $\phi(\delta y_{t_{k}t_{k+1}}) $ at the value $y^{(1)}_{t_{k}}  \delta x_{t_{k}t_{k+1}}$ we get
\begin{eqnarray}\label{e.phi.i}
\phi(\delta y_{t_{k}t_{k+1}}) 
&=&I_{1}+I_{2} ,  
\end{eqnarray}
where
\begin{eqnarray}\label{e.i1}
I_{1}&=& \sum_{j=0}^{q} \frac{ \phi^{(j)}  (y^{(1)}_{t_{k}}  x^{1}_{t_{k}t_{k+1}})}{j!}   \cdot (\delta y_{t_{k}t_{k+1}} - y^{(1)}_{t_{k}}\delta x_{t_{k}t_{k+1}})^{j} 
\\
I_{2}&=& 
\frac{\phi^{(q+1)}(\xi_{k})}{(q+1) !}   \cdot (\delta y_{t_{k}t_{k+1}} - y^{(1)}_{t_{k}}\delta x_{t_{k}t_{k+1}})^{q+1} , 
\label{e.i2}
\end{eqnarray}
where $\xi_{k}$ is some value between $\delta y_{t_{k}t_{k+1}} $ and $y^{(1)}_{t_{k}}\delta x_{t_{k}t_{k+1}}$. 

\noindent\emph{Step 2: Estimate of $I_{2}$.}
  We first note that when $p$ is an even number $\phi^{(q+1)}(\xi_{k})=\phi^{(p+1)}(\xi_{k})= 0$, and so $I_{2}=0$. In the following we assume that $p$ is not   even and by definition of $q$ we have $q+1=[p] $.  
It is clear that 
\begin{eqnarray}\label{e.cbd}
|\xi_{k}|\leq |\delta y_{t_{k}t_{k+1}}| +  |y^{(1)}_{t_{k}}\delta x_{t_{k}t_{k+1}} | . 
\end{eqnarray}
The relation  \eqref{e.cbd} together with   the definition of $\phi^{(q+1)}$ in  \eqref{e.fj}  yields    
\begin{eqnarray}\label{e.phi.ck}
| \phi^{(q+1)}(\xi_{k}) |=| \phi^{([p])}(\xi_{k}) |\lesssim |\delta y_{t_{k}t_{k+1}}|^{p-[p]}+  |y^{(1)}_{t_{k}}\delta x_{t_{k}t_{k+1}} |^{p-[ p]}. 
\end{eqnarray}
Since $y$ is controlled by $x$,  Definition \ref{def.control} and the assumption \eqref{e.local}  gives 
\begin{eqnarray*}
|\delta y_{t_{k}t_{k+1}}|\leq G_{\bfy}(1/n)^{H-\ep}\leq C_{0} (1/n)^{H-\ep}.
\end{eqnarray*}
  Similarly, we have  $ |y^{(1)}_{t_{k}}\delta x_{t_{k}t_{k+1}} | \lesssim (1/n)^{H-\ep}$. Substituting these two estimates into \eqref{e.phi.ck}  we get 
\begin{eqnarray}\label{e.phi.ck.bd}
| \phi^{([p])}(\xi_{k}) | \lesssim (1/n)^{(p-[p])H-\ep}\wedge 1, 
\end{eqnarray}
where we added $\wedge 1$ to include the case when $p$ is odd. 

By Definition \ref{def.control} of controlled processes again  
we   have the estimate   $|\delta y_{t_{k}t_{k+1}} - y^{(1)}_{t_{k}}\delta x_{t_{k}t_{k+1}}|_{L_{2}}\lesssim (1/n)^{2H-\ep}$. Applying this estimate  and the estimate   \eqref{e.phi.ck.bd} to \eqref{e.i2}  we obtain  
\begin{eqnarray}\label{e.i2bd}
\Big|\sum_{0\leq t_{k}<t} I_{2}\Big |\leq 
\sum_{0\leq t_{k}<t}  |I_{2}  |
 \lesssim n\cdot (1/n)^{(p-[p])H-\ep}\cdot(1/n)^{2[p] H -\ep}  =(1/n)^{  pH+  [p] H-1-2\ep}     . 
\end{eqnarray}
It follows from \eqref{e.i2bd}   that  when $1/2\geq H>1/4$ and $p\geq 2$ we have 
\begin{eqnarray}\label{e.i2c1}
  n^{ pH-1/2} \sum_{0\leq t_{k}<t} I_{2}\to 0\qquad\text{in probability as $n\to\infty$ }
\end{eqnarray}
  and when $H\leq1/4$ and $p\geq 3$  we have  
\begin{eqnarray}\label{e.i2c2}
n^{ (p+2)H-1}\sum_{0\leq t_{k}<t} I_{2}\to 0\qquad\text{in probability as $n\to\infty$.}
\end{eqnarray}
  This shows that  $I_{2}$ does not have contribution in the limits  of $U^{n}$ in \eqref{e.uconv1}-\eqref{e.uconv3} under the given conditions in Theorem \ref{thm.main}. 

\noindent\emph{Step 3:    Decomposition of $I_{1}$.} 
Recall that $\phi^{(j)}(x)$ and $\phi_{j}(x)$ are defined in \eqref{e.fj}-\eqref{e.fjp}. It is clear that 
\begin{eqnarray}\label{e.fjab}
\phi^{(j)}(a\cdot b) = K_{j} 
\phi_{j}(a\cdot b) =K_{j}  \phi_{j}(a)\phi_{j}(b)
\qquad
\text{for any $a$ and $b\in \RR$}.
\end{eqnarray}
On the other hand,  we  rewrite the relation \eqref{e.y} as:     
 \begin{eqnarray}\label{e.y1}
\delta y_{t_{k}t_{k+1}} - y^{(1)}_{t_{k}}\delta x_{t_{k}t_{k+1}}=\sum_{i=2}^{\ell-1} y^{(i)}_{t_{k}} x^{i}_{t_{k}t_{k+1}} +r^{(0)}_{t_{k}t_{k+1}}.
\end{eqnarray}
 Substituting \eqref{e.fjab}-\eqref{e.y1} into  \eqref{e.i1} we obtain  
\begin{eqnarray}\label{e.i1i}
I_{1} =  \sum_{j=0}^{q} \frac{K_{j}\phi_{j} (y^{(1)}_{t_{k}} )\phi_{j}(x^{1}_{t_{k}t_{k+1}})}{j!}   \cdot \lp \sum_{i=2}^{\ell-1} y^{(i)}_{t_{k}} x^{i}_{t_{k}t_{k+1}} +r^{(0)}_{t_{k}t_{k+1}}\rp^{j} . 
\end{eqnarray}
In the following we consider two different decompositions of $I_{1}$  in  \eqref{e.i1i}
 according to the value of~$H$.

When $H\leq 1/4$ we consider the   decomposition:
\begin{eqnarray}\label{e.i1d}
I_{1} = J_{1}+ J_{2}+J_{3}+J_{4}+J_{5}+J_{6},  
\end{eqnarray}
where
\begin{eqnarray}
J_{1} &=&   {K_{0}\phi_{0} (y^{(1)}_{t_{k}} )\phi_{0}(x^{1}_{t_{k}t_{k+1}})}   = |y^{(1)}_{t_{k}} |^{p} \cdot |x^{1}_{t_{k}t_{k+1}}|^{p}
\notag
\\
J_{2} &=&  {K_{1}\phi_{1} (y^{(1)}_{t_{k}} )\phi_{1}(x^{1}_{t_{k}t_{k+1}})}    \cdot   y^{(2)}_{t_{k}} x^{2}_{t_{k}t_{k+1}}  
\label{e.j2}
\\
J_{3} &=&  {K_{1}\phi_{1} (y^{(1)}_{t_{k}} )\phi_{1}(x^{1}_{t_{k}t_{k+1}})}    \cdot   y^{(3)}_{t_{k}} x^{3}_{t_{k}t_{k+1}}
\notag
\\
J_{4}&=&   \frac{K_{2}\phi_{2} (y^{(1)}_{t_{k}} )\phi_{2}(x^{1}_{t_{k}t_{k+1}})}{2!}   \cdot \lp  y^{(2)}_{t_{k}} x^{2}_{t_{k}t_{k+1}} \rp^{2} 
\notag
\\
J_{5} &=&   \sum_{j=0}^{q} \frac{K_{j}\phi_{j} (y^{(1)}_{t_{k}} )\phi_{j}(x^{1}_{t_{k}t_{k+1}})}{j!}   \cdot \lp \sum_{i=2}^{\ell-1} y^{(i)}_{t_{k}} x^{i}_{t_{k}t_{k+1}}  \rp^{j} - \sum_{e=1}^{4}J_{e} . 
\label{e.j5}
\\
J_{6} &=& I_{1} -  \sum_{j=0}^{q} \frac{K_{j}\phi_{j} (y^{(1)}_{t_{k}} )\phi_{j}(x^{1}_{t_{k}t_{k+1}})}{j!}   \cdot \lp \sum_{i=2}^{\ell-1} y^{(i)}_{t_{k}} x^{i}_{t_{k}t_{k+1}}  \rp^{j} . 
\label{e.j6}
\end{eqnarray}
When $H>1/4$ we consider the decomposition 
\begin{eqnarray}\label{e.i1.d}
I_{1}=J_{1}+J_{6}+(I_{1}-J_{1}-J_{6}).
\end{eqnarray}

\noindent\emph{Step 4:    Estimate of $J_{6}$.} 
Recall that $J_{6}$ is defined in \eqref{e.j6}.  Note that $J_{6}$ consists of the terms in \eqref{e.i1i} which contain $r^{(0)}_{t_{k}t_{k+1}}$. Similar to the estimate in \eqref{e.phi.ck.bd}, invoking the definition of controlled processes (see Definition \ref{def.control}) and the assumption \eqref{e.local}   we have 
\begin{eqnarray*}
|r^{(0)}_{t_{k}t_{k+1}}| \lesssim (1/n)^{\ell H -\ep} \qquad\text{and} 
\qquad
| y^{(i)}_{t_{k}} x^{i}_{t_{k}t_{k+1}} |\lesssim (1/n)^{2H-\ep },\quad i=2,\dots, \ell-1.  
\end{eqnarray*}
It follows that 
\begin{eqnarray}\label{e.i6.r}
\left| 
\lp \sum_{i=2}^{\ell-1} y^{(i)}_{t_{k}} x^{i}_{t_{k}t_{k+1}} +r^{(0)}_{t_{k}t_{k+1}}\rp^{j} 
-
\lp \sum_{i=2}^{\ell-1} y^{(i)}_{t_{k}} x^{i}_{t_{k}t_{k+1}} \rp^{j} 
\right|\lesssim   (1/n)^{\ell H -\ep}
 (1/n)^{ 2H \cdot (j-1) -\ep   }
  . 
\end{eqnarray}
On the other hand, by the definition of $\phi_{j}$ in \eqref{e.fj} we have the estimate 
\begin{eqnarray}\label{e.i6.phi}
\left|  \frac{K_{j}\phi_{j} (y^{(1)}_{t_{k}} )\phi_{j}(x^{1}_{t_{k}t_{k+1}})}{j!}  \right|\lesssim (1/n)^{(p-j)H-\ep}.
\end{eqnarray}
Substituting the two estimates \eqref{e.i6.r}-\eqref{e.i6.phi} into \eqref{e.j6} we obtain  
\begin{eqnarray*}
|   J_{6}| \lesssim \sum_{j=1}^{q}  (1/n)^{( \ell +2(j-1) +(p-j) )H -\ep} \lesssim   (1/n)^{( \ell -1 +p )H  -\ep}. 
\end{eqnarray*}
It follows that 
\begin{eqnarray*}
| \sum_{0\leq t_{k}<t}  J_{6}| \leq  \sum_{0\leq t_{k}<t} | J_{6}|  
 \lesssim   (1/n)^{( \ell -1 +p )H -1-\ep}. 
\end{eqnarray*}
It is readily checked  that 
\begin{eqnarray}\label{e.j6c1}
   n^{pH-1/2} \sum_{0\leq t_{k}<t}  J_{6}\to 0 \qquad \text{when   $\ell \geq 3$ and $H>1/4$}
\end{eqnarray}
   and
    \begin{eqnarray}\label{e.j6c2}
  n^{(p+2)H-1 } \sum_{0\leq t_{k}<t}  J_{6}\to 0  \qquad\text{when $\ell \geq 4$ and $H\leq1/4$}. 
\end{eqnarray}
 Note that this shows that  $J_{6}$   does not have contribution in any of the limits  of $U^{n}$ in \eqref{e.uconv1}-\eqref{e.uconv3}. 

\noindent\emph{Step 5:    Convergence of $J_{1}$.}
We first consider the case when $1/2\geq H>1/4$.  
According to  Proposition \ref{prop.pvar}  
given that    $p>3/2$ and that $|y'|^{p}$ is controlled by $(x, 2, H)$ 
  we have the stable f.d.d. convergence: 
 \begin{eqnarray}\label{e.j1c}
\frac{1}{\sqrt{n}} \sum_{0\leq t_{k}<t} ( n^{pH} J_{1}   - |y_{t_{k}}'|^{p} c_{p}) \xrightarrow{ \  \ }  {\sigma} \int_{0}^{t} |y_{t}'|^{p} dW_{t}
\qquad
\text{ as $n\to\infty$}.
\end{eqnarray}
Note that by Lemma \ref{lem.control} (ii) and Lemma \ref{lem.phi} (i) for $|y'|^{p}$ to be controlled by $(x, 2, H)$ it requires   $p\geq 2$ and   that  $y$ is controlled by $(x,\ell,H)$ for $\ell \geq 3$. 

On the other hand, given that 
 $|y'|^{p}$ is controlled by $(x, 2, H)$   Proposition \ref{prop.riemann2} implies that 
  \begin{eqnarray}\label{e.j1c2}
\frac{1}{\sqrt{n}} \sum_{0\leq t_{k}<t}  |y_{t_{k}}'|^{p} c_{p}   - \sqrt{n}\cdot c_{p}  \int_{0}^{t}|y_{u}'|^{p} du \to 0
\qquad
\text{ as $n\to\infty$}.
\end{eqnarray}
  Combining \eqref{e.j1c2} with the convergence in \eqref{e.j1c}  we obtain   the stable f.d.d. convergence 
\begin{eqnarray}\label{e.j1ci}
&&
\sqrt{n} \lp  \sum_{0\leq t_{k}<t}  n^{pH-1} J_{1}   -  c_{p}  \int_{0}^{t}|y_{u}'|^{p} du \rp
\xrightarrow{ \  \ }  {\sigma} \int_{0}^{t} |y_{u}'|^{p} dW_{u}
\qquad
\text{ as $n\to\infty$}.
\end{eqnarray}

We turn to the case when $H=1/4$.  According to  Proposition \ref{prop.pvar} given that
   $|y'|^{p}$ is controlled by $(x, 3, H)$, $p\in(7/2,\infty)\cup\{ 2\}$   we have the stable f.d.d. convergence: 
\begin{eqnarray*}
\frac{1}{\sqrt{n}} \sum_{0\leq t_{k}<t} ( n^{pH} J_{1}   - |y_{t_{k}}'|^{p} c_{p}) \xrightarrow{ \   \ }  {\sigma} \int_{0}^{t} |y_{u}'|^{p} dW_{u}+\frac{pc_{p}}{8}\int_{0}^{t} (|y_{u}'|^{p})'' du
\qquad
\text{as $n\to\infty$},
\end{eqnarray*}
where $(|y_{u}'|^{p})'' = (\phi(y'_{u}))'' = \phi''(y'_{u})(y''_{u})^{2}+\phi'(y'_{u})y'''_{u}$. 
By Lemma \ref{lem.control} (ii)  and Lemma \ref{lem.phi} (i) again this requires  $p\in[ 3,\infty)\cup\{2\}$   and $\ell\geq  4$.
Similar to   \eqref{e.j1ci}, we can apply Proposition \ref{prop.riemann} to  obtain  the stable f.d.d. convergence: 
\begin{eqnarray}\label{e.j1cii}
\sqrt{n} \lp  \sum_{0\leq t_{k}<t}  n^{pH-1} J_{1}   -  c_{p}  \int_{0}^{t}|y_{u}'|^{p} du \rp
  \xrightarrow{ \   \ }  {\sigma} \int_{0}^{t} |y_{u}'|^{p} dW_{u}+\frac{pc_{p}}{8}\int_{0}^{t} (|y_{u}'|^{p})'' du\,.
\end{eqnarray}

When $H<1/4$, given that $p\in(7/2,\infty)\cup\{ 2\}$ and $|y'|^{p}$ is controlled by $(x, 3, H)$ we have the convergence in probability:
\begin{eqnarray}\label{e.j1ciii}
n^{2H-1} \sum_{0\leq t_{k}<t} ( n^{pH} J_{1}   - |y_{t_{k}}'|^{p} c_{p}) \rightarrow \frac{pc_{p}}{8}\int_{0}^{t} (|y_{u}'|^{p})'' du\,.
\end{eqnarray}
Then it follows from  Proposition \ref{prop.riemann} again that   
\begin{eqnarray*}
n^{2H} \lp  \sum_{0\leq t_{k}<t}  n^{pH-1} J_{1}   -  c_{p}  \int_{0}^{t}|y_{u}'|^{p} du \rp
  \xrightarrow{} \frac{pc_{p}}{8}\int_{0}^{t} (|y_{u}'|^{p})'' du
  \qquad \text{in probability}.
\end{eqnarray*}


\noindent\emph{Step 6: Proof of \eqref{e.uconv1} and the convergence of   $(I_{1}-J_{1}-J_{6})$.}
 In this step we   show the convergence:   
 \begin{eqnarray}\label{e.i1rconv}
n^{pH-1/2}\sum_{0\leq t_{k}<t}(I_{1}-J_{1}-J_{6}) \to0.
\end{eqnarray}
 Combining \eqref{e.i1rconv}   with the convergences of $I_{2}$, $J_{6}$ and $J_{1}$ respectively in \eqref{e.i2c1}, \eqref{e.j6c1} and \eqref{e.j1ci}, and invoking the relations 
 \eqref{e.phi.i}  and \eqref{e.i1.d}   we then 
 obtain the    convergence in \eqref{e.uconv1}.

We first note that by the definition of $I_{1}$, $J_{1}$ and $J_{6} $ we have  
\begin{eqnarray}\label{e.i1r2}
\sum_{0\leq t_{k}<t} (I_{1}-J_{1}-J_{6} ) =
\sum_{0\leq t_{k}<t} 
   \sum_{j=1}^{q} \frac{K_{j}\phi_{j} (y^{(1)}_{t_{k}} )\phi_{j}(x^{1}_{t_{k}t_{k+1}})}{j!}   \cdot \lp \sum_{i=2}^{\ell-1} y^{(i)}_{t_{k}} x^{i}_{t_{k}t_{k+1}}  \rp^{j}  . 
\end{eqnarray}
It is easy to see that \eqref{e.i1r2} consists of weighted sums of the form $
\cj_{0}^{t} (z, h^{n}) $ for 
\begin{eqnarray}\label{e.i1rz}
z =  \frac{K_{j}\phi_{j} (y^{(1)}_{t_{k}} ) }{j!}   \cdot   \sum_{\substack{2\leq i_{1},\dots,i_{j}\leq\ell-1\\ i_{1}+\dots+i_{j}=r}} y^{(i_{1})}_{t_{k}}    \cdots y^{(i_{j})}_{t_{k}}      
\end{eqnarray}
and
\begin{eqnarray}\label{e.i1r}
h^{n}_{st}= \sum_{s\leq t_{k}<t} \phi_{j}(x^{1}_{t_{k}t_{k+1}}) \cdot (x^{1}_{t_{k}t_{k+1}})^{r}
= \sum_{s\leq t_{k}<t}   (x^{1}_{t_{k}t_{k+1}})^{r}|x^{1}_{t_{k}t_{k+1}}|^{p-j} \text{sign}(x^{1}_{t_{k}t_{k+1}})^{j}
\notag
\\
= \sum_{s\leq t_{k}<t}    |x^{1}_{t_{k}t_{k+1}}|^{p-j+r} \text{sign}(x^{1}_{t_{k}t_{k+1}})^{j+r}
. 
\end{eqnarray}
 for  $j=1,\dots, q$ and $r\geq 2j$. When $ p-j+r \geq p+2 $ we can bound $|h^{n}_{t_{k}t_{k+1}}|\lesssim (1/n)^{(p+2)H-\ep}$ and thus we have the convergence
 \begin{eqnarray}\label{e.i1rc}
n^{pH-1/2}\cj_{0}^{t}(z, h^{n}) \to 0 \qquad\text{in probability as $n\to\infty$.}
\end{eqnarray}

 In the following we consider the estimate of \eqref{e.i1r} when $ p-j+r < p+2 $, that is when $r-j<2$. Note that this implies    $r-j= 1$ and thus $j=1$ and $r=2$. So we have 
 \begin{eqnarray*}
\cj_{0}^{t}(z, h^{n}) = \sum_{0\leq t_{k}<t} K_{1}\phi_{1}(y'_{t_{k}}) \phi_{1}(x^{1}_{t_{k}t_{k+1}}) y''_{t_{k}}x^{2}_{t_{k}t_{k+1}}. 
\end{eqnarray*}
 Let $f(x)=\phi_{1}(x) x^{2} $, $x\in\RR$. It is clear that $f$ has Hermite rank $d=1$. 
 According to Proposition \ref{prop.yhbd2} (ii), given that  
 (a) $ f \in W^{2,2}(\RR,\ga)$ and (b) $z=\phi_{1}(y')y''$ defined in \eqref{e.i1rz} is controlled by $(x,2,H)$ we have  
 $\cj(z, h^{n})\sim (1/n)^{ H-1+ H(p+1) }$, which   implies that  the convergence \eqref{e.i1rc}   holds. By Lemma \ref{lem.control}(ii)    and Lemma \ref{lem.phi}(i) Condition (a) requires either $2(p+1-2)>-1$ or that $p$ is even, which means we must have  $p>1/2$.  By Lemma \ref{lem.control}(ii)    and Lemma \ref{lem.phi}(i)   Condition (b) holds when $p\in[ 3,\infty)\cup\{2\}$ and $\ell\geq4$.

 In summary, we have shown that \eqref{e.i1rc} holds for all $j$ and $r$. Invoking the decomposition of $I_{1}-J_{1}-J_{6}$ in \eqref{e.i1r2}-\eqref{e.i1r},    we conclude \eqref{e.i1rconv} and thus \eqref{e.uconv1}.

\noindent\emph{Step 7:    Convergence of $J_{2}$ when $H\leq 1/4$.}
Recall the definition of $\phi_{j}$ and $J_{2}$ in \eqref{e.fj} and \eqref{e.j2}, respectively. So we have
\begin{eqnarray*}
\sum_{0\leq t_{k}<t} J_{2} 
=\frac{p}{2} \sum_{0\leq t_{k}<t}  \phi_{1} (y'_{t_{k}} )y_{t_{k}}''  |x^{1}_{t_{k}t_{k+1}}|^{p+1} \text{sign}(x^{1}_{t_{k}t_{k+1}}).
\end{eqnarray*}
Suppose   that $\phi_{1} (y'_{t } )y_{t }'' $  is controlled by $(x,2,H)$. According to   Lemma \ref{lem.control}(ii) this requires $p\in[ 3,\infty)\cup\{2\}$ and $\ell\geq 4$, and in this case   we have 
\begin{eqnarray*}
(\phi_{1} (y'_{t } )y_{t }'')' =  \phi_{1}'(y_{t}')(y_{t}'')^{2} +\phi_{1}(y_{t}') y_{t}''' .
\end{eqnarray*}
 Applying   Lemma \ref{prop.sign} we obtain  the convergence in probability 
\begin{eqnarray}\label{e.j2c}
n^{(p+2)H-1} \sum_{0\leq t_{k}<t} J_{2}  \to \frac{p}{2}(-\frac{1}{2 } c_{p+2}) \int_{0}^{t} ( \phi_{1}'(y_{u}')(y_{u}'')^{2} +\phi_{1}(y_{u}') y_{u}''' ) du
\notag
\\
= -\frac{1}{4}(p+1)c_{p} \int_{0}^{t} ( \phi''(y_{u}')(y_{u}'')^{2} +\phi'(y_{u}') y_{u}''' ) du.
\end{eqnarray}

\noindent\emph{Step 8:    Convergence of $J_{3}$ when $H\leq 1/4$.}
We first rewrite $J_{3}$ as
\begin{eqnarray*}
J_{3} = {K_{1}\phi_{1} (y^{(1)}_{t_{k}} )\phi_{1}(x^{1}_{t_{k}t_{k+1}})}    \cdot   y^{(3)}_{t_{k}} x^{3}_{t_{k}t_{k+1}}
=\frac{p}{6}  \phi_{1} (y'_{t_{k}} )  y'''_{t_{k}}  |x^{1}_{t_{k}t_{k+1}}|^{p+2} .
\end{eqnarray*}
It is easy to see that we have the bound 
$
|\sum_{0\leq t_{k}<t}J_{3}|_{L_{p}}\lesssim (1/n)^{(p+2)H} . 
$ In the following we show that $\sum_{0\leq t_{k}<t}J_{3}$ is also convergence under proper conditions of $p$ and $\ell$. 

We consider the following decomposition 
\begin{eqnarray}\label{e.j3d}
J_{3} &=& \frac{p}{6}  \phi_{1} (y'_{t_{k}} )  y'''_{t_{k}} 
\lp
 |x^{1}_{t_{k}t_{k+1}}|^{p+2}-c_{p+2}(1/n)^{(p+2)H}\rp+ \frac{p}{6}c_{p+2}  \phi_{1} (y'_{t_{k}} )  y'''_{t_{k}} 
 (1/n)^{(p+2)H}
 \notag
 \\
 &=:& J_{31}+J_{32}. 
\end{eqnarray}

Applying Proposition \ref{prop.yhbd2} (ii)  to $J_{31}$ with  $d=2$ we obtain that $n^{(p+2)H-1}\sum_{0\leq t_{k}<t}J_{31}\to 0$ in probability. Note that the application of Proposition \ref{prop.yhbd2} (ii)  requires $p\in[ 4,\infty)\cup\{2\}$ and $\ell\geq 6$. On the other hand, by continuity of $\phi_{1}(y')y'''$ we have the convergence 
  $n^{(p+2)H-1} \sum_{0\leq t_{k}<t} J_{32}  \to \frac{p}{6} c_{p+2}   \int_{0}^{t} \phi_{1} (y'_{u} )  y'''_{u}du $. Substituting these two convergence into \eqref{e.j3d} we obtain 
\begin{eqnarray}\label{e.j3c}
n^{(p+2)H-1} \sum_{0\leq t_{k}<t} J_{3}  \to \frac{p}{6} c_{p+2}   \int_{0}^{t} \phi_{1} (y'_{u} )  y'''_{u}du 
\notag
\\
= \frac{1}{6}(p+1) c_{p}   \int_{0}^{t} \phi' (y'_{u} )  y'''_{u}du . 
\end{eqnarray}

\noindent\emph{Step 9:    Convergence of $J_{4}$ when $H\leq 1/4$.}
We first rewrite $J_{4}$ as
\begin{eqnarray*}
J_{4} = \frac{K_{2}\phi_{2} (y^{(1)}_{t_{k}} )\phi_{2}(x^{1}_{t_{k}t_{k+1}})}{2!}   \cdot \lp  y^{(2)}_{t_{k}} x^{2}_{t_{k}t_{k+1}} \rp^{2} 
\\
=\frac{p(p-1)}{8}   \phi_{2} (y'_{t_{k}} )  (y''_{t_{k}})^{2}  \cdot |x^{1}_{t_{k}t_{k+1}}|^{p+2} . 
\end{eqnarray*}
Similar to  $J_{3}$  by   applying  Proposition \ref{prop.pvar} (ii)-(iii) 
   with $d=2$ we obtain the convergence 
\begin{eqnarray}\label{e.j4c}
n^{(p+2)H-1} \sum_{0\leq t_{k}<t} J_{4}  \to \frac{p(p-1)}{8} c_{p+2}   \int_{0}^{t} \phi_{2} (y'_{u} )  (y''_{u})^{2}du
\notag 
\\
=\frac{1}{8}(p+1)c_{p} \int_{0}^{t} \phi''(y'_{u})(y''_{u})^{2}du. 
\end{eqnarray}
Note that the application of Proposition \ref{prop.pvar} (ii)-(iii) requires $p\in[ 5,\infty)\cup\{2,4\}$ and $\ell\geq 5$. 

 \noindent\emph{Step 10:    Convergence of $J_{5}$.}
Recall that $J_{5}$ is defined in \eqref{e.j5}. It is easy to see that we have 
\begin{eqnarray*}
J_{5}&=&  
\frac{K_{1}\phi_{1} (y^{(1)}_{t_{k}} )\phi_1(x^{1}_{t_{k}t_{k+1}})}{1!}   \cdot \lp \sum_{i=4}^{\ell-1} y^{(i)}_{t_{k}} x^{i}_{t_{k}t_{k+1}}  \rp 
\\
&&+
\frac{K_{2}\phi_2 (y^{(1)}_{t_{k}} )\phi_{2}(x^{1}_{t_{k}t_{k+1}})}{2!}   \cdot \lp \sum_{i=3}^{\ell-1} y^{(i)}_{t_{k}} x^{i}_{t_{k}t_{k+1}}  \rp^{2}
\\
&&+\sum_{j=3}^{q} \frac{K_{j}\phi_{j} (y^{(1)}_{t_{k}} )\phi_{j}(x^{1}_{t_{k}t_{k+1}})}{j!}   \cdot \lp \sum_{i=2}^{\ell-1} y^{(i)}_{t_{k}} x^{i}_{t_{k}t_{k+1}}  \rp^{j} 
\\
&=&J_{51}+J_{52}+J_{53},  
\end{eqnarray*}
where we use the convention that $\sum_{j=3}^{q}=0$ when $q<3$ and that $\sum_{i=4}^{\ell-1}y^{(i)}_{t_{k}} x^{i}_{t_{k}t_{k+1}} =0$ when $ \ell-1<4$. 
In the following we bound each $J_{5i}$, $i=1,2,3$.

For $J_{51}$   a direct estimate shows that  
\begin{eqnarray*}
|J_{51}|\lesssim 
 |\phi_{1} (y^{(1)}_{t_{k}} )|\cdot |\phi_1(x^{1}_{t_{k}t_{k+1}})| \cdot  \sum_{i=4}^{\ell-1} |y^{(i)}_{t_{k}}| \cdot|x^{i}_{t_{k}t_{k+1}}|    
 \lesssim  (1/n)^{(p-1)H+4H} =(1/n)^{(p+3)H }.  
\end{eqnarray*}
So we have $n^{(p+2)H-1}\sum_{0\leq t_{k}<t} J_{51}\to 0$. 
Similarly,  we can bound $J_{52}$ and $J_{53}$ by 
\begin{eqnarray*}
|J_{52}|\lesssim (1/n)^{(p-2)H+6H}
\qquad
\text{and}
\qquad
|J_{53}|\lesssim \sum_{j=3}^{\lfloor p\rfloor -1} (1/n)^{(p-j)H+2jH}\lesssim  (1/n)^{(p+3)H }.
\end{eqnarray*}
We conclude that  $n^{(p+2)H-1}\sum_{0\leq t_{k}<t} (J_{52}+J_{53})\to 0$ in probability as $n\to\infty$. 
We conclude that the convergence in probability 
\begin{eqnarray}\label{e.j5c}
n^{(p+2)H-1}\sum_{0\leq t_{k}<t} J_{5}\to 0
\qquad\text{as $n\to\infty$}.
\end{eqnarray}

    \noindent\emph{Step 11:    Conclusion.} 
         In Step 5 we have shown that  the convergence in \eqref{e.uconv1} holds. 
  Putting together the convergences \eqref{e.i2c2}, \eqref{e.j6c2}, \eqref{e.j1cii}, \eqref{e.j2c}, \eqref{e.j3c}, \eqref{e.j4c}, \eqref{e.j5c} for $I_{2}$, $J_{i}$, $i=1,\dots, 6$ and invoking Lemma \ref{lem.stable}, and then taking into account 
      the decompositions \eqref{e.phi.i} and \eqref{e.i1d}, 
      we obtain  the   convergence in \eqref{e.uconv2}.   
Finally, replacing \eqref{e.j1cii} by   \eqref{e.j1ciii}  in the argument  we obtain the   convergence  \eqref{e.uconv3}.  
\end{proof}

\bibliographystyle{abbrv}
 \bibliography{p-variation.bib}

\begin{thebibliography}{10}

\bibitem{AJ}
Y.~A{\"\i}t-Sahalia and J.~Jacod.
\newblock {\em High-frequency financial econometrics}.
\newblock Princeton University Press, 2014.

\bibitem{AE}
D.~J. Aldous and G.~K. Eagleson.
\newblock On mixing and stability of limit theorems.
\newblock {\em The Annals of Probability}, pages 325--331, 1978.

\bibitem{barndorff2009power}
O.~E. Barndorff-Nielsen, J.~M. Corcuera, and M.~Podolskij.
\newblock Power variation for gaussian processes with stationary increments.
\newblock {\em Stochastic processes and their applications}, 119(6):1845--1865,
  2009.

\bibitem{barndorff2006limit}
O.~E. Barndorff-Nielsen, S.~E. Graversen, J.~Jacod, and N.~Shephard.
\newblock Limit theorems for bipower variation in financial econometrics.
\newblock {\em Econometric Theory}, 22(4):677--719, 2006.

\bibitem{barndorff2002econometric}
O.~E. Barndorff-Nielsen and N.~Shephard.
\newblock Econometric analysis of realized volatility and its use in estimating
  stochastic volatility models.
\newblock {\em Journal of the Royal Statistical Society Series B: Statistical
  Methodology}, 64(2):253--280, 2002.

\bibitem{barndorff2003realized}
O.~E. Barndorff-Nielsen and N.~Shephard.
\newblock Realized power variation and stochastic volatility models.
\newblock {\em Bernoulli}, 9(2):243--265, 2003.

\bibitem{barndorff2004econometric}
O.~E. Barndorff-Nielsen and N.~Shephard.
\newblock Econometric analysis of realized covariation: High frequency based
  covariance, regression, and correlation in financial economics.
\newblock {\em Econometrica}, 72(3):885--925, 2004.

\bibitem{barndorff2004power}
O.~E. Barndorff-Nielsen and N.~Shephard.
\newblock Power and bipower variation with stochastic volatility and jumps.
\newblock {\em Journal of financial econometrics}, 2(1):1--37, 2004.

\bibitem{barndorff2006}
O.~E. Barndorff-Nielsen, N.~Shephard, and M.~Winkel.
\newblock Limit theorems for multipower variation in the presence of jumps.
\newblock {\em Stochastic processes and their applications}, 116(5):796--806,
  2006.

\bibitem{BFG}
C.~Bayer, P.~Friz, and J.~Gatheral.
\newblock Pricing under rough volatility.
\newblock {\em Quant. Finance}, 16(6):887--904, 2016.

\bibitem{bennedsen2016decoupling}
M.~Bennedsen, A.~Lunde, and M.~S. Pakkanen.
\newblock Decoupling the short-and long-term behavior of stochastic volatility.
\newblock {\em J. Financ. Econom.}, 2022.
\newblock To appear.

\bibitem{binotto2018weak}
G.~Binotto, I.~Nourdin, and D.~Nualart.
\newblock Weak symmetric integrals with respect to the fractional brownian
  motion.
\newblock {\em The Annals of Probability}, 46(4):2243--2267, 2018.

\bibitem{burdzy2010change}
K.~Burdzy and J.~Swanson.
\newblock A change of variable formula with it{\^o} correction term.
\newblock {\em The Annals of Probability}, 38(5):1817--1869, 2010.

\bibitem{chong2022statistical}
C.~Chong, M.~Hoffmann, Y.~Liu, M.~Rosenbaum, and G.~Szymanski.
\newblock Statistical inference for rough volatility: Central limit theorems.
\newblock {\em arXiv preprint arXiv:2210.01216}, 2022.

\bibitem{corcuera2014asymptotics}
J.~M. Corcuera, D.~Nualart, and M.~Podolskij.
\newblock Asymptotics of weighted random sums.
\newblock {\em arXiv preprint arXiv:1402.1414}, 2014.

\bibitem{corcuera2006power}
J.~M. Corcuera, D.~Nualart, and J.~H. Woerner.
\newblock Power variation of some integral fractional processes.
\newblock {\em Bernoulli}, 12(4):713--735, 2006.

\bibitem{FH20}
P.~K. Friz and M.~Hairer.
\newblock {\em A course on rough paths}.
\newblock Springer, 2020.

\bibitem{FV10}
P.~K. Friz and N.~B. Victoir.
\newblock {\em Multidimensional stochastic processes as rough paths: theory and
  applications}, volume 120.
\newblock Cambridge University Press, 2010.

\bibitem{F21}
M.~Fukasawa.
\newblock Volatility has to be rough.
\newblock {\em Quant. Finance}, 21(1):1--8, 2021.

\bibitem{FTW21}
M.~Fukasawa, T.~Takabatake, and R.~Westphal.
\newblock Consistent estimation for fractional stochastic volatility model
  under high-frequency asymptotics.
\newblock {\em Math. Finance}, 32(4):1086--1132, 2022.

\bibitem{GJR}
J.~Gatheral, T.~Jaisson, and M.~Rosenbaum.
\newblock Volatility is rough.
\newblock {\em Quant. Finance}, 18(6):933--949, 2018.

\bibitem{gatheral2020quadratic}
J.~Gatheral, P.~Jusselin, and M.~Rosenbaum.
\newblock The quadratic rough heston model and the joint s\&p 500/vix smile
  calibration problem.
\newblock {\em arXiv preprint arXiv:2001.01789}, 2020.

\bibitem{G04}
M.~Gubinelli.
\newblock Controlling rough paths.
\newblock {\em Journal of Functional Analysis}, 216(1):86--140, 2004.

\bibitem{guyon1989convergence}
X.~Guyon and J.~Le{\'o}n.
\newblock Convergence en loi des h-variations d'un processus gaussien
  stationnaire sur r.
\newblock In {\em Annales de l'IHP Probabilit{\'e}s et statistiques},
  volume~25, pages 265--282, 1989.

\bibitem{jacod08}
J.~Jacod.
\newblock Asymptotic properties of realized power variations and related
  functionals of semimartingales.
\newblock {\em Stochastic processes and their applications}, 118(4):517--559,
  2008.

\bibitem{JP}
J.~Jacod and P.~Protter.
\newblock {\em Discretization of Processes}, volume~67 of {\em Stochastic
  Modelling and Applied Probability}.
\newblock Springer, Heidelberg, 2012.

\bibitem{JS}
J.~Jacod and A.~Shiryaev.
\newblock {\em Limit theorems for stochastic processes}, volume 288.
\newblock Springer Science \& Business Media, 2013.

\bibitem{kabanov2006central}
Y.~Kabanov, R.~Liptser, J.~Stoyanov, O.~E. Barndorff-Nielsen, S.~E. Graversen,
  J.~Jacod, M.~Podolskij, and N.~Shephard.
\newblock {\em A central limit theorem for realised power and bipower
  variations of continuous semimartingales}.
\newblock Springer, 2006.

\bibitem{kinnebrock2008note}
S.~Kinnebrock and M.~Podolskij.
\newblock A note on the central limit theorem for bipower variation of general
  functions.
\newblock {\em Stochastic processes and their applications}, 118(6):1056--1070,
  2008.

\bibitem{leon2000weak}
J.~Leon and A.~Leonard.
\newblock Weak convergence of a nonlinear functional of a stationary gaussian
  process. application to the local time.
\newblock {\em Acta Mathematica Hungarica}, 87(1-2):71--97, 2000.

\bibitem{liu2023convergence}
Y.~Liu, Z.~Selk, and S.~Tindel.
\newblock Convergence of trapezoid rule to rough integrals.
\newblock In {\em Annales de l'Institut Henri Poincare (B) Probabilites et
  statistiques}, volume~59, pages 1434--1462. Institut Henri Poincar{\'e},
  2023.

\bibitem{LT20}
Y.~Liu and S.~Tindel.
\newblock Discrete rough paths and limit theorems.
\newblock In {\em Annales de l'Institut Henri Poincar{\'e}, Probabilit{\'e}s et
  Statistiques}, volume~56, pages 1730--1774. Institut Henri Poincar{\'e},
  2020.

\bibitem{LMPR18}
G.~Livieri, S.~Mouti, A.~Pallavicini, and M.~Rosenbaum.
\newblock Rough volatility: {E}vidence from option prices.
\newblock {\em IISE Trans.}, 50(9):767--776, 2018.

\bibitem{norvaivsa2015weighted}
R.~Norvai{\v{s}}a.
\newblock Weighted power variation of integrals with respect to a gaussian
  process.
\newblock {\em Bernoulli}, 21(2):1260--1288, 2015.

\bibitem{nourdin2008asymptotic}
I.~Nourdin.
\newblock Asymptotic behavior of certain weighted quadratic and cubic
  variations of fractional brownian motion.
\newblock {\em Annals of Probability}, 36(6):2159--2175, 2008.

\bibitem{nourdin2016quantitative}
I.~Nourdin, D.~Nualart, and G.~Peccati.
\newblock Quantitative stable limit theorems on the wiener space.
\newblock {\em The Annals of Probability}, 44(1):1--41, 2016.

\bibitem{nourdin2010central}
I.~Nourdin, D.~Nualart, and C.~A. Tudor.
\newblock Central and non-central limit theorems for weighted power variations
  of fractional brownian motion.
\newblock In {\em Annales de l'IHP Probabilit{\'e}s et statistiques},
  volume~46, pages 1055--1079, 2010.

\bibitem{nourdin2009asymptotic}
I.~Nourdin and A.~R{\'e}veillac.
\newblock Asymptotic behavior of weighted quadratic variations of fractional
  brownian motion: The critical case h= 1/4.
\newblock {\em The Annals of Probability}, 37(6):2200--2230, 2009.

\bibitem{N06}
D.~Nualart.
\newblock {\em The Malliavin calculus and related topics}, volume 1995.
\newblock Springer, 2006.

\bibitem{nualart2019asymptotic}
D.~Nualart and N.~Yoshida.
\newblock Asymptotic expansion of skorohod integrals.
\newblock {\em Electron. J. Probab}, 24(119):1--64, 2019.

\bibitem{woerner2003variational}
J.~H. Woerner.
\newblock Variational sums and power variation: a unifying approach to model
  selection and estimation in semimartingale models.
\newblock {\em Statistics \& Decisions}, 21(1):47--68, 2003.

\bibitem{woerner2005estimation}
J.~H. Woerner.
\newblock Estimation of integrated volatility in stochastic volatility models.
\newblock {\em Applied Stochastic Models in Business and Industry},
  21(1):27--44, 2005.

\bibitem{young1936inequality}
L.~Young.
\newblock An inequality of the h{\"o}lder type, connected with stieltjes
  integration.
\newblock {\em Acta Mathematica}, 67(1):251--282, 1936.

\end{thebibliography}
 

\end{document}